\newcommand{\veq}{\mathrel{\rotatebox{90}{$=$}}}
\theoremstyle{theorem}
\newtheorem{theorem}{Theorem}[section]
\newtheorem{corollary}[theorem]{Corollary}
\newtheorem{lemma}[theorem]{Lemma}
\newtheorem{proposition}[theorem]{Proposition}
\theoremstyle{definition}
\newtheorem{example}[theorem]{Example}
\newtheorem{remark}[theorem]{Remark}
\newtheorem{conjecture}[theorem]{Conjecture}
\numberwithin{equation}{section}
\theoremstyle{theorem}
\newtheorem{question}{Question}
\DeclareMathOperator{\Hom}{Hom}%
\DeclareMathOperator{\coker}{Coker}%
\DeclareMathOperator{\codim}{codim}%
\DeclareMathOperator{\Char}{char}%
\DeclareMathOperator{\Ext}{Ext}%
\DeclareMathOperator{\Tor}{Tor}%
\DeclareMathOperator{\het}{ht}%
\DeclareMathOperator{\projdim}{pdim}%
\DeclareMathOperator{\rank}{rk}%
\DeclareMathOperator{\reg}{reg}%
\newcommand{\fm}{\mathfrak{m}}%
\newcommand{\llar}{-\kern-5pt-\kern-5pt\longrightarrow}
\def\restr{{\kern-1pt\restriction\kern-1pt}}
\keywords{free resolutions, Betti numbers, Buchsbaum--Eisenbud--Horrocks Conjecture, Total Rank Conjecture}
\subjclass[2020]{Primary: 13D02.}
\begin{document}

\title{Lower bounds on Betti numbers}

\author[Boocher]{Adam Boocher}
\address{Department of Mathematics, University of San Diego, San Diego, CA 92110, USA}
\email{aboocher@sandiego.edu}

\author[Grifo]{Elo\'isa Grifo}
\address{Department of Mathematics, University of Nebraska -- Lincoln, Lincoln, NE 68588, USA}
\email{grifo@unl.edu}

\dedicatory{Dedicated to David~Eisenbud on the occasion of his 75th birthday.}

\maketitle

\begin{abstract}
	We survey recent results on bounds for Betti numbers of modules over polynomial rings, with an emphasis on lower bounds. Along the way, we give a gentle introduction to free resolutions and Betti numbers, and discuss some of the reasons why one would study these.
\end{abstract}

\section{Introduction}

Consider a polynomial ring over a field $k$, say $R=k[x_1, \ldots, x_n]$. When studying finitely generated graded modules $M$ over $R$, there are many important invariants we may consider, with the Betti numbers of $M$, denoted $\beta_i(M)$, being among some of the richest. The Betti numbers are defined in terms of generators and relations (see Section \ref{Section:what is a free resolution}), with $\beta_0(M)$ being the number of minimal generators of $M$, $\beta_1(M)$ the number of minimal relations on these generators, and so on. Despite this simple definition, they encode a great deal of information. For instance, if one knows the Betti numbers\footnote{Really, we mean the graded Betti numbers of $M$, to be defined in Section \ref{section why}.} of $M$, one can determine the Hilbert series, dimension, multiplicity, projective dimension, and depth of $M$. Furthermore, the Betti numbers provide even finer data than this, and can often be used to detect subtle geometric differences (see Example \ref{Ex:Twisted Cubic} for an obligatory example concerning the twisted cubic curve).

There are many questions one can ask about Betti numbers. What sequences arise as the Betti numbers of some module? Must the sequence be unimodal? How small, or how large, can individual Betti numbers be? How large is the sum?   Questions like this are but just a few examples of those that have been studied in the past decades, and of the flavor we will discuss in this survey.  We will focus on perhaps one of the longest standing open questions in this area, which is due to Buchsbaum--Eisenbud, and independently Horrocks (BEH). Their conjecture proposes a lower bound for each $\beta_i(M)$ depending only on the codimension $c$ of $M$: that $\beta_i(M) \geqslant {c \choose i}$. While the conjecture remains widely open in the general setting, there are some special cases that are known. Moreover, if the conjecture is true, then the total Betti number of $M$, $\beta(M) := \beta_0(M) + \cdots + \beta_n(M)$, must satisfy $\beta(M) \geqslant 2^c$. Recently, Mark Walker \cite{W} proved this bound on the total Betti number --- known as the Total Rank Conjecture --- in all cases except when $\textrm{char } k = 2$. Walker also showed that equality holds if and only if $M$ is isomorphic to $R$ modulo a regular sequence --- such modules are called complete intersections. 

The Betti numbers of modules that are {\it not} complete intersections are quite interesting. For example, it follows from Walker's result that if our module $M$ is not a complete intersection, then $\beta(M) \geqslant 2^c+1$, but there is reason to believe that $\beta(M)$ might be much bigger than $2^c$. Charalambous, Evans, and Miller \cite{CEM} asked if in fact we must have $\beta(M) \geqslant 2^c + 2^{c-1}$, and proved that this holds when $M$ is either a graded module small codimension ($c \leqslant 4$), or a multigraded module of finite length (meaning $c = n$) for arbitrary $c$ \cite{CE,Chara}. More evidence towards this larger bound for Betti numbers has recently been found, including \cite{BS,BoocherW}.

For example, Erman showed \cite{Erman} that if $M$ is a graded module of small regularity (in terms of the degrees of the first syzygies), then not only is the BEH Conjecture \ref{BEH Conjecture} true, but in fact $\beta_i(M) \geqslant \beta_0(M) {c \choose i}$. The first author and Wigglesworth \cite{BoocherW} then extended Erman's work to say that under the same low regularity hypothesis, $\beta(M) \geqslant \beta_0(M) (2^c + 2^{c-1})$. This stronger bound asserts that on average, each Betti number $\beta_i(M)$ is at least 1.5 times $\beta_0(M) {c \choose i}$.

The main goal of this survey is to discuss these lower bounds on Betti numbers and present some of the motivation for these conjectures.  We start with a short introduction to free resolutions and Betti numbers, why we care about them, and some of the very rich history surrounding these topics. We also collect some open questions, discuss some possible approaches, and present examples that explain why certain hypothesis are important. 

\section{What is a Free Resolution?}\label{Section:what is a free resolution}

Let $R = k[x_1, \ldots, x_n]$ be a polynomial ring over a field $k$. We will be primarily concerned with {\bf finitely generated graded $R$-modules} $M$. One important invariant of such a module is the minimal number of elements needed to generate $M$. In fact, this number is the first in a sequence of {\bf Betti numbers} that describe how far $M$ is from being a free module.  Indeed, suppose that $M$ is minimally generated by $\beta_0$ elements; this means there is a surjection from $R^{\beta_0}$ to $M$, say 
$$\xymatrix{
 R^{\beta_0} \ar^-{\pi_0}@{->>}[r] & M.
}$$

If $\pi_0$ is an isomorphism, then $M\cong R^{\beta_0}$ is a {\bf free module} of {\bf rank} $\beta_0$.  Otherwise, it has a nonzero kernel, which will also be finitely generated and can be written as the surjective image of some free module $R^{\beta_1}$:
$$\xymatrix@C=5mm@R=5mm{
R^{\beta_1} \ar@{->>}[rd] \ar@{-->}[rr] & & R^{\beta_0} \ar@{->>}[rr]^-{\pi_0} && M. \\
& \ker(\pi_0) \ar@{^{(}->}[ur]
}$$
Notice that if $M$ is generated by $m_1, \ldots, m_{\beta_0}$, and $\pi_0$ is the map sending each canonical basis element $e_i$ in $R^{\beta_0}$ to $m_i$, then an element $(r_1, \ldots, r_{\beta_0})^T$ in the kernel of $\pi_0$ corresponds precisely to a {\bf relation} among the $m_i$, meaning that 
$$r_1 m_1 + \cdots + r_{\beta_0} m_{\beta_0} = 0.$$
Such relations are called {\bf syzygies}\footnote{Fun fact: in astronomy, a syzygy is an alignment of three or more celestial objects.} of $M$ and the module $\ker{ \pi_0}$ is called the first syzygy module of $M$.  

Continuing this process we can \emph{approximate} $M$ by an exact sequence 
$$\xymatrix{
 \cdots \ar[r] & F_p  \ar[r]^-{\pi_{p}} &  \cdots  \ar[r]^-{\pi_2} &  F_1  \ar[r]^-{\pi_1} & F_0  \ar[r]^-{\pi_0} &  M  \ar[r] & 0
}$$
where each $F_i$ is free. Such an exact sequence is called a {\bf free resolution of $M$}.

If at each step we have chosen $F_i$ to have the minimal number of generators, then we say the resolution is {\bf minimal}, and we set $\beta_i(M)$ to be the rank of $F_i$ in any such minimal free resolution.  This is well-defined, because it is true that two minimal free resolutions of $M$ are isomorphic as complexes.  Furthermore, one has the following, 
$$\beta_i(M) = \rank F_i = \rank_k  \Tor^R_i(M, k).$$  
The {\bf $i$th syzygy module of $M$}, denoted $\Omega_i(M)$, is defined to be the image of $\pi_{i}$, or equivalently the kernel of $\pi_{i-1}$. We note that $\Omega_i(M)$ is defined only up to isomorphism. 

If at some point in the resolution we obtain an injective map of free modules, then its kernel is trivial, and we obtain a finite free resolution, in this case of length $p$:
$$\xymatrix{
 0 \ar[r] & F_p  \ar[r] &  \cdots  \ar[r] &  F_1  \ar[r] & F_0  \ar[r] &  M  \ar[r] & 0.
}$$
If a module $M$ has a finite minimal projective resolution, the length of such a resolution is called the projective dimension of $M$, and we write it $\projdim M$.  
\begin{remark}\label{rmk:Rank-Nullity}
We will often implicitly apply the Rank-Nullity Theorem to conclude that
$$ \beta_i(M) = \rank \Omega_i(M) + \rank \Omega_{i+1}(M).$$
\end{remark}

\begin{example}
	If $M = R/(f_1, \ldots, f_c)$ where the $f_i$ form a regular sequence, then the minimal free resolution of $M$ is given by the {\bf Koszul complex.}   For instance if $c = 4$ then the minimal resolution has the form
	$$\xymatrix{
 0 \ar[r] & R^1  \ar[r]^-1 & R^4 \ar[r]^-3 & R^6  \ar[r]^-3 & R^4  \ar[r]^-1 & R^1  \ar[r]^-0 & M.}$$
 Note that the numbers over the arrows represent the rank of the corresponding map, which is equal to the rank of the corresponding syzygy module $\Omega_i(M)$.
	We will discuss this in more detail in Section \ref{section:building}. We will also see that the ranks occurring in the Koszul complex are conjectured to be the smallest possible for modules of codimension $c$ (see Conjecture \ref{beh ranks syzygies}).
\end{example}
\begin{example}  One of the strongest known bounds on ranks of syzygies is the Syzygy Theorem \ref{The SyzygyTheorem} which states that except for the last syzygy module, the rank of $\Omega_i(M)$ is always at least $i$.   A typical use of such a result might be as follows.  Suppose we had a rank zero module $M$ with Betti numbers $\{1, 7, 8, 8, 7, 1\}$.  Then we could calculate the ranks of the syzygy modules by using Remark \ref{rmk:Rank-Nullity} to obtain the ranks labeled in the diagram below:
$$\xymatrix{
 0 \ar[r] & R^1  \ar[r]^-1 & R^7 \ar[r]^-6 & R^8  \ar[r]^-2 & R^8  \ar[r]^-6 & R^7  \ar[r]^-1 & 
R^1  \ar[r]^-0 & M.}$$
We would also obtain from Remark \ref{rmk:Rank-Nullity} that $\rank \Omega_3(M) = 2$, which we will see violates Theorem \ref{The SyzygyTheorem}. Therefore, such a module does not exist! See also Example \ref{Ex:Pure modules with small Betti numbers two examples 32 and 44}.
\end{example}

\begin{example}
	In \cite{Dugger}, Dugger discusses almost complete intersection ideals and the tantalizing fact that we currently do not know whether or not there is an ideal $I$ of height 5 with minimal free resolution
	$$\xymatrix{
 0 \ar[r] & R^6  \ar[r]^-6 & R^{12} \ar[r]^-6 & R^{10}  \ar[r]^-4 & R^9  \ar[r]^-5 & R^6  \ar[r]^-1 & 
R^1  \ar[r]^-0 & R/I.}$$
\end{example}

\

\noindent
David Hilbert, interested in studying minimal free resolutions as a way to count invariants, was able to prove that finitely generated modules over a polynomial ring always have finite projective dimension  \cite{HilbertSyzygy}.

\begin{theorem}[Hilbert's Syzygy Theorem, 1890]
Let $R = k[x_1, \ldots, x_n]$ be a polynomial ring in $n$ variables over a field $k$. If $M$ is a finitely generated graded $R$-module, then $M$ has a finite free resolution of length at most $n$.
\end{theorem}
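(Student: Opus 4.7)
The plan is to show that $\beta_i(M) = 0$ for all $i > n$, which, since $\beta_i(M) = \rank_k \Tor_i^R(M,k)$, amounts to proving $\Tor_i^R(M,k) = 0$ in the same range. This is equivalent to the minimal free resolution of $M$ having length at most $n$.

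The starting point is the symmetry $\Tor_i^R(M,k) \cong \Tor_i^R(k,M)$, which lets us compute Tor by resolving the second argument instead of the first. I would then exhibit a free resolution of $k$ of length exactly $n$, after which the vanishing of $\Tor_i^R(k,M)$ for $i > n$ follows immediately by computing it as the homology of that resolution tensored with $M$.

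The resolution of choice is the Koszul complex $K_\bullet$ on the variables $x_1, \ldots, x_n$, with $K_i = R^{{n \choose i}}$ and the standard alternating differentials, augmented by $K_0 = R \to k = R/(x_1, \ldots, x_n)$. This gives a complex of length $n$, so the only nontrivial point is its acyclicity in positive degrees. This is the main obstacle, and I would handle it by induction on $n$: the Koszul complex on $x_1, \ldots, x_n$ can be realized as the mapping cone of multiplication by $x_n$ on the Koszul complex on $x_1, \ldots, x_{n-1}$, and since $x_1, \ldots, x_n$ is a regular sequence in $R$, the image of $x_n$ is a nonzerodivisor on $R/(x_1, \ldots, x_{n-1})$. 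The associated long exact sequence in homology then forces $H_i(K_\bullet) = 0$ for $i \geqslant 1$, completing the induction.

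Once acyclicity is established, the theorem follows: $\Tor_i^R(M,k)$ is computed as $H_i(K_\bullet \otimes_R M)$, which vanishes for $i > n$ since $K_i = 0$ in that range. Therefore $\beta_i(M) = 0$ for $i > n$, and any minimal free resolution of $M$ has length at most $n$. One should note that the induction never required $M$ itself to be well-behaved; all of the regularity hypotheses are on the variables in $R$, which is why this argument works uniformly for every finitely generated graded module.
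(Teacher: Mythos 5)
The paper states Hilbert's Syzygy Theorem and attributes it to \cite{HilbertSyzygy} without giving a proof, so there is no internal argument to compare against; your proof is correct and is the standard modern one. You use the balance of $\Tor$ to compute $\Tor_i^R(M,k)$ by resolving $k$ rather than $M$, exhibit the Koszul complex $K_\bullet$ on $x_1,\ldots,x_n$ as a length-$n$ free resolution of $k$, and conclude $\Tor_i^R(M,k)=0$ for $i>n$, hence $\beta_i(M)=0$ for $i>n$. The inductive acyclicity step via the mapping cone of multiplication by $x_n$ on $K_\bullet(x_1,\ldots,x_{n-1})$ is sound: the long exact sequence gives $H_i=0$ for $i\geqslant 2$ by induction and identifies $H_1$ with the kernel of $x_n$ acting on $R/(x_1,\ldots,x_{n-1})$, which vanishes because $x_n$ is a nonzerodivisor there. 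The one implicit dependency worth flagging is the identity $\beta_i(M)=\rank_k\Tor_i^R(M,k)$ used at the end to pass from vanishing of $\Tor$ to termination of the minimal free resolution; the paper records this in Section~\ref{Section:what is a free resolution}, but a fully self-contained proof would need to justify it (tensoring a minimal free resolution with $k$ kills the differentials, since all entries of the maps lie in the maximal homogeneous ideal).
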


While we are primarily interested in studying polynomial rings over fields, Hilbert's Syzygy Theorem is true more generally for any noetherian regular ring. In fact, if we focus our study on local rings instead, the condition that every finitely generated module has finite projective dimension characterizes regular local rings \cite{AuslanderBuchsbaum,Serre}. While we will be working over polynomial rings throughout the rest of the paper, we point out that the theory of (infinite) free resolutions over non-regular rings is quite interesting and rich; \cite{InfiniteGradedFreeRes} and \cite{InfiniteFreeResolutions} are excellent places to start learning about this.

The upshot of Hilbert's Syzygy Theorem is that to each finitely generated $R$-module $M$ we attach a finite list of Betti numbers $\beta_0(M), \, \ldots, \, \beta_n(M)$. Note that while some of these might vanish, $M$ has at most $n+1$ non-zero Betti numbers.

Our main goal in this paper is to discuss the following question: 

\begin{question}\label{main question}
If $M$ is a finitely generated graded module over $R = k[x_1,\ldots,x_n]$, where $k$ is a field, can we bound the Betti numbers of $M$, either from above or below?
\end{question}

As we will see, there are many results and conjectures relevant to the answer to this question.  Feel free to skip the next section if you can't handle the suspense!

\section{Why Study Resolutions?}\label{section why}

Before getting to the heart of the matter in Section \ref{section BEH conjecture}, we would first like to offer some motivation as to why one might care about Betti numbers at all.  

\subsection{Betti Numbers Encode Geometry}

In a sense, a minimal free resolution of $M$ contains redundant information --- after all, the first map $\pi_1\!\!: F_1 \to F_0$ is a presentation of $M$. However, suppose we do not know the maps in the resolution, but just the \textbf{numerical data} of the resolution, namely the numbers $\{\beta_i\}$.  Surprisingly, this coarse invariant encodes much geometric and algebraic information about $M$. First of all, the Betti numbers $\beta_i$ tell us that $M$ has $\beta_0$ generators, that there are $\beta_1$ relations among those generators, and $\beta_2$ relations among those relations, and so on. But the Betti numbers also encode more sophisticated information about $M$.  For instance, since rank is additive across exact sequences, we have
$$\rank M = \beta_0 - \beta_1 +  \cdots + (-1)^n \beta_n.$$  

Moreover, if we have a graded module $M$, we can take the resolution of $M$ to be a graded resolution, and if among the $\beta_i$ generators of $\Omega_i(M)$, exactly $\beta_{ij}$ of them live in degree $j$, then the following formula gives the Hilbert series for $M$:
\begin{equation}\label{equation for HS}HS(M) = \displaystyle\frac{\,\, \displaystyle\sum_{i=0}^d (-1)^i \beta_{ij} t^j \,\,}{(1-t)^d}.\end{equation}
We recall that the {\bf Hilbert series} of $M$ is a power series that encodes the $k$-vector space dimension of each graded piece $M_i$ of $M$, as follows:
$$HS(M) = \displaystyle\sum_{i=0}^\infty \dim_k (M_i) t^i.$$
This is a classical tool that contains important algebraic and geometric information about our module. For example, once we write $HS(M) = p(t)/(1-t)^m$ with $p(1) \neq 0$, we have $\dim (M) = m$ and $p(1)$ is equal to the degree of $M$. So just by knowing its (graded) Betti numbers, we can then determine the multiplicity (i.e. degree), dimension, projective dimension, Cohen-Macaulayness, and other properties and invariants of a module $M$.  
 
The following example gives the spirit of these ideas:

\begin{example}\label{Ex:3 lines in k^3}
Suppose that $R = k[x,y,z]$ and that $M = R/(xy,xz,yz)$ corresponds to the affine variety defining the union of the {\color{orange} three} coordinate {\color{brown}lines} in $k^3$. This variety has dimension {\color{brown}one} and degree {\color{orange} three}. Let us illustrate how the (graded) Betti numbers communicate this. The minimal free resolution for $M$ is
\[
\xymatrix{   
0\ar[r] & R^2 \ar[rrr]^{\displaystyle{\psi} = \begin{bmatrix} z & 0 \\ -y & y \\ 0 & -x \end{bmatrix}
  } &&& R^3 \ar[rrrr]^{\displaystyle{\phi} = \begin{bmatrix} xy& xz & yz\end{bmatrix}} &&&& R \ar[r] & M.
}
\]
From this minimal resolution, we can read the Betti numbers of $M$:
\begin{itemize}
\item $\beta_0 = 1$, since $M$ is a cyclic module;
\item $\beta_1 = {\color{cyan}3}$, and these three quadratic generators live in degree {\color{ForestGreen}2};
\item $\beta_2 = \textcolor{magenta}2$, and these represent linear (degree 1) syzygies on quadrics (degree 2), and thus live in degree {\color{blue} 3  $(=1+2)$ }.
\end{itemize}

We can include this \emph{graded} information in our resolution, and write a \emph{graded} free resolution of $M$:
\[
\xymatrix{   
0\ar[r] & R({\color{blue} -3})^{\textcolor{magenta}2} \ar[rrr]^-{\displaystyle{\psi} = \begin{bmatrix} z & 0 \\ -y & y \\ 0 & -x \end{bmatrix}
  } &&& R({\color{ForestGreen} -2})^{\textcolor{cyan}3} \ar[rrrr]^-{\displaystyle{\phi} = \begin{bmatrix} xy& xz & yz\end{bmatrix}} &&&& R \ar[r] & M.
}
\]

The $R({\color{ForestGreen} -2})^{\textcolor{cyan} 3}$ indicates that we have \textcolor{cyan}{three} generators of degree {\textcolor{ForestGreen} 2}. Formally, the $R$-module $R(-a)$ is one copy of $R$ whose elements have their degrees shifted by $a$: the polynomial $1$ lives in degree $0$ in $R$ and degree $a$ in $R(-a)$, and in general the degree $d$ piece of $R(-a)$ consists of the elements of $R$ of degree $d-a$. With this convention, the map $\phi$ keeps degrees unchanged --- we say it is a degree $0$ map: for example, it takes the vector $[1,1,1]^T$, which lives in degree $2$, to the element $xy + xz + yz$, which is an element of degree $2$. When we move on to the next map, $\psi$, we only need to shift the degree of each generator by {\color{orange}1}, but since $\psi$ now lands on $R({\color{ForestGreen}-2})^3$, we write $R({\color{blue} -3})^2$. 

The graded Betti number $\beta_{ij}(M)$ of $M$ counts the number of copies of $R(-j)$ in homological degree $i$ in our resolution. So we have
$$\beta_{00}=1, \beta_{1{\textcolor{ForestGreen} 2}} = {\textcolor{cyan} 3}, \textrm{and } \beta_{2{\textcolor{blue} 3}}= {\textcolor{magenta}2}.$$

We can collect the graded Betti numbers of $M$ in what is called a \emph{Betti table}:

$$
\begin{array}{cc}
\begin{array}{r|ccccc} 
\beta(M) & 0 &1&2&\\ \hline
0& \beta_{00}  & \beta_{11}  & \beta_{22}  & \\
1 & \beta_{01}  & \beta_{12}  & \beta_{23}  &  \\
& 
\end{array},
& \begin{array}{r|ccccc} 
\beta(M) & 0 &1&2&\\ \hline
0 & 1 & - & - & \\
1 & - & {\textcolor{cyan} 3} & {\textcolor{magenta}2} &  \\
& 
\end{array}.
\end{array}
$$
\begin{remark}
	To the reader who is seeing Betti tables for the first time, we point out that although we will write resolutions so that the maps go from left to right, and thus the Betti numbers appear from right to left $\{\ldots, \beta_2, \beta_1, \beta_0 \}$  in a Betti table, the opposite order is used.  Furthermore, by convention, the entry corresponding to $(i,j)$ in the Betti table of $M$ is $\beta_{i,i+j}(M)$, and {\it not} $\beta_{ij}(M)$.
\end{remark}

Finally, we can use this information to calculate the Hilbert series of $M$:
$$HS(M) = \frac{1t^0 - {\textcolor{cyan} 3}t^{\textcolor{ForestGreen}2} + {\textcolor{magenta}2}t^{\textcolor{blue} 3}}{(1-t)^3} = \frac{1+2t}{(1-t)^1},$$
and since this last fraction is in lowest terms, we see that the dimension of $M$ is ${\color{brown}1}$ (the degree of the denominator) and that the degree of $M$ is equal to $p(1)=1+2 \cdot 1 = {\color{orange} 3}$.  Recall that $M$ corresponded to the union of 3 lines.  Notice that in this example, the projective dimension of $M$ is $2$, which is equal to the codimension $3-{\color{brown} 1} = 2$ of $M$. Hence, $M$ is Cohen-Macaulay. In summary, we can get lots of information about $M$ from its (graded) Betti numbers.
\end{example}

\begin{example}(The Hilbert series doesn't determine the Betti numbers) 
Let $k$ be a field, $R = k[x,y]$, and consider the two ideals 
$$I = (x^2,xy,y^3) \quad \textrm{ and } \quad J = (x^2,xy+y^2).$$
One can check that both $R/I$ and $R/J$ have the same Hilbert series:
$$HS(R/I) = HS(R/J) = 1 + 2t + 1t^2.$$
However, these modules have different Betti numbers. We work out the minimal free resolution and Betti numbers for $R/I$. Since $I$ has two generators of degree 2 and one of degree 3, there are graded Betti numbers $\beta_{12}$ and $\beta_{13}$. Similarly, the two minimal syzygies of $R/I$ correspond to the relations
\begin{align*}
	y(x^2) - x(xy) = 0 & \mbox{ \ which has degree 3, so } \beta_{23} = 1\\
	& \textrm{ and } \\ 
	y^2(xy) - x(y^3) = 0 & \mbox{\ which has degree 4, so } \beta_{24} = 1.
\end{align*}

Continuing this process, we find the following minimal free resolutions and graded Betti numbers for $R/I$ and $R/J$, respectively:

\noindent
\begin{minipage}{0.52\textwidth}
\vspace{0.2em}
\[\resizebox{\displaywidth}{!}{
\xymatrix@C=2mm@R=3mm{   
 {\begin{array}{c} 
R({\color{blue} -3})^{\textcolor{magenta}1} \\
\bigoplus \\
R({\color{blue} -4})^{\textcolor{magenta}1} \\
\end{array}}
\ar[rr]^-{ \begin{bmatrix} y & 0 \\ -x & y^2 \\ 0 & -x \end{bmatrix}
  } &&
  {\begin{array}{c}
  R({\color{ForestGreen} -2})^{\textcolor{cyan}2} \\ 
  \bigoplus \\ 
  R({\color{ForestGreen} -3})^{\textcolor{cyan}1}
  \end{array}}
  \ar[rr]^-{\begin{bmatrix} x^2& xy & y^3\end{bmatrix}} && R
  \\    &  {\begin{array}{c} \beta_{2{\color{blue}3}}(R/I) = {\textcolor{magenta}1}\\ \beta_{2{\color{blue}4}}(R/I) = {\textcolor{magenta}1} \end{array}}& &
  {\begin{array}{c} \beta_{1{\color{ForestGreen}2}}(R/I) = {\textcolor{cyan}2}\\ \beta_{1{\color{ForestGreen}3}}(R/I) = {\textcolor{cyan}1} \end{array}}
 }}
\]
\end{minipage}
\vline
\begin{minipage}{0.48\textwidth}
\[\resizebox{\displaywidth}{!}{
\xymatrix@C=2mm@R=7mm{   
R({\color{blue} -4})^{\textcolor{magenta}1} 
\ar[rr]^-{\begin{bmatrix} xy+y^2  \\ -x^2 \end{bmatrix}
  } &&
   R({\color{ForestGreen} -2})^{\textcolor{cyan}2} 
  \ar[rr]^-{\begin{bmatrix} x^2& xy+y^2 \end{bmatrix}} && R.
  \\ & {\beta_{2{\color{blue}4}}(R/J) = {\textcolor{magenta}1}} &&
  {\beta_{1{\color{ForestGreen}2}}(R/J) = {\textcolor{cyan}2}}
  }}
  \]
  \end{minipage}
  \vspace{1em}
$$\begin{array}{cc}
\begin{array}{r|ccccc} 
\beta(R/I) & 0 &1&2&\\ \hline
0& 1  & -  & -  & \\
1 & -  & 2  & \textcolor{red}{1}  &  \\
2 & -  & \textcolor{red}{1} & 1  &  \\ 
\end{array}  
\hspace{4em} & \hspace{6em}
\begin{array}{r|ccccc} 
\beta(R/J) & 0 &1&2&\\ \hline
0& 1  & -  & -  & \\
1 & -  & 2  & -  &  \\
2 & -  & -  & 1  &  \\ 
\end{array}
\end{array}$$  

\
  
Finally, if we calculate the Hilbert series from Equation \ref{equation for HS}, we notice that the calculation is the same for $R/I$ and $R/J$:
$$HS(R/I) = \frac{1 - 2t^2 \textcolor{red}{- t^3 + t^3} + t^4}{(1-t)^3} =  \frac{1 - 2t^2 + t^4}{(1-t)^3} = HS(R/J).$$ The cancellation of the $\textcolor{red}{t^3}$ terms is known as a \textbf{consecutive cancellation}, and one can see the two $\textcolor{red}{1} $s on the diagonal in the Betti table for $R/I$.   For the reader who knows about Gr\"obner degenerations, $I$ is the initial ideal of $J$ coming from a Lex term-order.  Any such degeneration will preserve the Hilbert series, but not necessarily the Betti numbers.  For results concerning the relationship between the Betti numbers of ideals and those of their initial ideals, see \cite{Ardila,B,ConcaVarbaro,CDNG2,CDNG,Moh}. \end{example}

\begin{example}\label{Ex:Twisted Cubic}
We would be remiss if, in this article dedicated to David Eisenbud on his birthday, we didn't also mention that the connection between graded Betti numbers and geometry is a rich and beautiful story.  In his book \cite{Eisenbud}, he paints a story that begins with the following surprising fact from geometry. If $X$ is a set consisting of seven general\footnote{this means that no more than 3 lie on a plane and no more than 5 on a conic.} points in $\mathbb{P}^3$, then the Hilbert series of the coordinate ring for $X$ is completely determined by this data.  However, this is not sufficient to determine the Betti numbers of the coordinate ring of $X$.  Indeed, these numbers are either $\{1, 4, 6, 3\}$ or  $\{1,6, 8, 3 \}$
depending on whether or not the points lie on a curve of degree 3.  
\end{example}

\subsection{Resolutions for Ideals with Few Generators}\label{section:building}
Over a polynomial ring $R = k[x_1, \ldots, x_n]$, calculating a free resolution is
tantamount to producing the sets of dependence relations among the generators of a module.  In simple cases this is straightforward, as the following example shows:

\begin{example}\label{Ex:principal ideal  M = R/f}
Consider the module $M = R/(f)$, where $f$ is a homogeneous polynomial in $R$. Then 
$$\xymatrix{0 \ar[r]&  R \ar[r]^{\begin{bmatrix}f\end{bmatrix}} & R\ar[r] & M}$$
is a minimal free resolution of length $1$, since over our polynomial ring $R$, $f$ is a \emph{regular} element and cannot be killed by multiplication by any nonzero element. 

If $I$ is an ideal minimally generated by two polynomials $f$ and $g$, then the minimal free resolution of $R/I$ has length two. Indeed, if $c = \gcd(f,g)$, then the following is a minimal free resolution: 
$$\xymatrix{0 \ar[r]&  R \ar[rr]^{\begin{bmatrix} g/c\\ -f/c \end{bmatrix}} && R^2\ar[rr]^{\begin{bmatrix} f & g \end{bmatrix}} && R \ar[r] & R/I}.$$
\end{example}
This example  can be summarized by the following result:
\begin{proposition} If $I$ is an ideal in a polynomial ring $R$ that is minimally generated by one or two homogeneous polynomials, then the projective dimension of $R/I$ is equal to the minimal number of generators, and the Betti numbers are either $\{1,1\}$ or $\{1,2,1\}$. 
\end{proposition}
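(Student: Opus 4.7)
The plan is to handle the two cases separately, both by explicitly exhibiting a minimal free resolution of the stated shape and verifying that it really is exact and minimal; the content of the proposition is almost entirely contained in Example \ref{Ex:principal ideal  M = R/f}, so the write-up should justify what was asserted there.

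For the one-generator case $I=(f)$, I would note that $f$ is a nonzero homogeneous element of the domain $R$, so multiplication by $f$ is an injection $R\hookrightarrow R$; this gives the resolution $0\to R\xrightarrow{f} R\to R/I\to 0$. Minimality reduces to checking $f\in\mathfrak{m}=(x_1,\ldots,x_n)$, which holds because $f$ being a minimal generator forces $f\notin k^\times$, i.e.\ $\deg f>0$.

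For the two-generator case $I=(f,g)$, the heart of the argument is a $\gcd$-computation of the first syzygy module, for which I use that $R=k[x_1,\ldots,x_n]$ is a UFD. Set $c=\gcd(f,g)$, and write $f=cf'$, $g=cg'$ with $\gcd(f',g')=1$. I would prove that the kernel of $R^2\xrightarrow{[f\;g]}R$ is the rank-one free module generated by $(g',-f')^T=(g/c,-f/c)^T$: given $(a,b)\in R^2$ with $af+bg=0$, cancel the nonzerodivisor $c$ to get $af'=-bg'$, then apply the UFD fact that $\gcd(f',g')=1$ implies $g'\mid a$, so $a=g'h$ and consequently $b=-f'h$. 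Both $f'$ and $g'$ are nonzero (since $f,g$ are), so $h\mapsto h(g',-f')^T$ is injective by integral-domain reasoning. This produces the exact complex
\[
0\longrightarrow R\xrightarrow{\ \begin{bmatrix} g/c\\ -f/c\end{bmatrix}\ } R^2\xrightarrow{\ [f\ g]\ } R\longrightarrow R/I\longrightarrow 0.
\]

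For minimality, I must verify all matrix entries lie in $\mathfrak{m}$. The entries $f,g$ are in $\mathfrak{m}$ as in Case 1. For the second map, I need $f/c$ and $g/c$ to have positive degree, i.e.\ $c$ to be a proper divisor of each of $f$ and $g$; but if, say, $\deg c=\deg f$, then $c$ and $f$ are associate, $f\mid g$, and $I=(g)$ is singly generated, contradicting minimal generation by two elements. Hence minimality follows directly from the minimal generation hypothesis, and the resolution shows $\projdim R/I=2$ with Betti numbers $\{1,2,1\}$. The main obstacle—really the only nontrivial step—is the syzygy computation, and it is handled cleanly by the UFD property of $R$; the rest is bookkeeping with degrees to confirm minimality.
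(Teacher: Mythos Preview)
Your proposal is correct and follows exactly the approach the paper takes: the paper does not give a separate proof of this proposition but simply points to Example~\ref{Ex:principal ideal  M = R/f}, where the two resolutions are written down without further justification. Your write-up supplies the details the paper omits---the UFD computation of the first syzygy module and the verification of minimality---so it is a faithful (and more complete) expansion of the paper's argument.
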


Whatever optimistic generalization of this proposition one might have in mind for ideals with $3$ or more generators will certainly fail to be true, as we have the following astonishing results of Burch and Bruns:

\begin{theorem}[Burch, 1968 \cite{Burch}]
For each $N \geqslant 2$, there exists a three-generated ideal $I$ in a polynomial ring $R = k[x_1, \ldots, x_N]$ such that $\projdim (R/I) = N$.
\end{theorem}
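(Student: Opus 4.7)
The plan is to reduce the problem to a depth computation via the Auslander--Buchsbaum formula. Since $R = k[x_1, \ldots, x_N]$ is regular of global dimension $N$, any finitely generated $R$-module $M$ satisfies $\projdim(M) + \depth(M) = N$. Thus a three-generated ideal $I \subset R$ achieves $\projdim(R/I) = N$ if and only if $\depth(R/I) = 0$, i.e., $\fm = (x_1,\ldots,x_N) \in \Ass(R/I)$, equivalently $R/I$ has a nonzero socle element. The task becomes: for each $N$, construct a three-generated ideal $I$ together with some $f \in R \setminus I$ satisfying $\fm \cdot f \subseteq I$.

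The small cases can be handled by hand. For $N = 2$, take $I = \fm^2 = (x_1^2, x_1 x_2, x_2^2)$; then $R/I$ has finite length and hence depth $0$. For $N = 3$, take $I_3 = (x_1^2, x_2^2, (x_1+x_2) x_3)$: the class $\overline{x_1 x_2}$ is a nonzero socle element, since $x_1^2 x_2$ and $x_1 x_2^2$ lie trivially in $I_3$, the identity $x_1 x_2 x_3 = x_2(x_1+x_2)x_3 - x_2^2 x_3$ gives $x_1 x_2 x_3 \in I_3$, and setting $x_3 = 0$ in any hypothetical expression $x_1 x_2 = a x_1^2 + b x_2^2 + c(x_1+x_2) x_3$ forces $x_1 x_2 \in (x_1^2, x_2^2)$ in $k[x_1, x_2]$, a contradiction. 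For general $N$ the natural first attempt is $I_N = (x_1^2, x_2^2, h_N)$, with $h_N$ chosen to keep $\overline{x_1 x_2}$ (or a close substitute) in the socle of $R/I_N$.

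The hard part is choosing $h_N$ for large $N$. With $h_N = x_1 p + x_2 q$ and $p, q \in k[x_3, \ldots, x_N]$, a short calculation modulo $(x_1^2, x_2^2)$ shows that $x_1 x_2 \cdot x_i \in I_N$ for every $i \geq 3$ is equivalent to $(p, q) \supseteq (x_3, \ldots, x_N)$ inside $k[x_3, \ldots, x_N]$. This is achievable when $N \leq 4$ (take $p = x_3$, $q = x_4$, giving $h_4 = x_1 x_3 + x_2 x_4$), but Krull's height theorem prevents it for $N \geq 5$, since $(p,q)$ then has height at most $2 < N-2$. One must therefore either take $h_N$ of higher degree (introducing $x_1 x_2$ terms that produce new relations when multiplied by units modulo $(x_1^2, x_2^2)$), or abandon $\overline{x_1 x_2}$ in favor of a more elaborate socle candidate. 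Burch's original argument in \cite{Burch} realizes such a construction for every $N$; the key obstacle is the combined bookkeeping needed to produce a socle element of $R/I_N$ while simultaneously verifying, by an appropriate weight or specialization argument, that this element remains nonzero modulo the three generators as $N$ grows.
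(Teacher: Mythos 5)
Your reduction via Auslander--Buchsbaum to exhibiting a nonzero socle class in $R/I$ is the right framing, and the small cases you write down are correct: for $N=2$ take $\fm^2$; for $N=3$ take $(x_1^2, x_2^2, (x_1+x_2)x_3)$, and for $N=4$ take $(x_1^2, x_2^2, x_1x_3+x_2x_4)$ --- in each case one verifies directly that $\overline{x_1x_2}$ is a nonzero socle element, so $\depth(R/I)=0$ and $\projdim(R/I)=N$. Your diagnosis of why the pattern breaks for $N\geqslant 5$ is also sound: with $h_N = x_1p + x_2q$, $p,q\in k[x_3,\ldots,x_N]$, and socle candidate $x_1x_2$, the condition $\fm\cdot\overline{x_1x_2}=0$ in $R/I_N$ unwinds to $(p,q)\supseteq(x_3,\ldots,x_N)$ in $k[x_3,\ldots,x_N]$, which Krull's height theorem rules out as soon as $N-2>2$.

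The gap is everything after that. The final sentences --- ``one must therefore either take $h_N$ of higher degree \ldots\ or abandon $\overline{x_1x_2}$'' followed by ``Burch's original argument in \cite{Burch} realizes such a construction for every $N$'' --- do not prove the statement; they describe the remaining difficulty and then cite Burch for having resolved it, which is circular as a proof of Burch's theorem. No three-generated ideal, no socle element, and no nonvanishing verification is produced for $N\geqslant 5$, and that uniform construction across all $N$ is precisely the content of the theorem. (The paper itself supplies only the same citation, so there is no internal argument to compare your route to; but judged as a blind proof, yours is incomplete.) One clean way to close the gap without reproducing Burch's original construction is to invoke the later and stronger Theorem~\ref{Bruns3Generated} of Bruns: applying it to the Koszul resolution of $k$ over $R=k[x_1,\ldots,x_N]$ yields a three-generated ideal $I$ whose minimal free resolution has length $N$, hence $\projdim(R/I)=N$.
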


So we can always find free resolutions of maximal length by simply using $3$ generated ideals. In fact, in some sense ``every'' free resolution is the free resolution of a $3$-generated ideal:

\begin{theorem}[Bruns, 1976 \cite{Bruns}]\label{Bruns3Generated}
	Let $R = k[x_1, \ldots, x_n]$ and
	$$\xymatrix{0 \ar[r] & F_n \ar[r] & F_{d-1} \ar[r] & \cdots \ar[r] & F_2 \ar[r] & F_1 \ar[r] & F_0 \ar[r] & M}$$
	be a minimal free resolution of a finitely generated graded $R$-module $M$. Then there exists a $3$-generated ideal $I$ in $R$ with minimal free resolution
	$$\xymatrix{0 \ar[r] & F_n \ar[r] & \cdots \ar[r] & F_3 \ar[r] & F_2' \ar[r] & R^3 \ar[r] & R \ar[r] & R/I}.$$
\end{theorem}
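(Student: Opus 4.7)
The plan is to reformulate the claim as follows: it suffices to produce a proper 3-generated ideal $I \subseteq R$ such that $\Omega_3(R/I) \cong \Omega_3(M)$ as $R$-modules. Once this is achieved, the minimal free resolution of $\Omega_3(M)$---which is determined by the module up to isomorphism and agrees with the truncation $0 \to F_n \to \cdots \to F_4 \to F_3 \to \Omega_3(M) \to 0$ of the given resolution---automatically serves as the tail of the minimal resolution of $R/I$. Splicing this tail with a new beginning $F_2' \to R^3 \to R \to R/I$ then yields the desired resolution.

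Set $L := \Omega_3(M)$, which is torsion-free since it is a syzygy module. The problem reduces to constructing an exact sequence
\[
0 \to L \to F_2' \to R^3 \to R \to R/I \to 0
\]
with $F_2'$ free and $I$ a proper 3-generated ideal. Rank additivity across the sequence forces $\operatorname{rk} F_2' = \operatorname{rk} L + 2$.

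To build this sequence, I would proceed in two stages. First, embed $L$ into $F_2' := R^{\operatorname{rk} L + 2}$ (possible since $L$ is torsion-free of the prescribed rank), producing a rank-2 cokernel $N := F_2'/L$. Second, realize $N$ as a rank-2 submodule of $R^3$ whose quotient is a rank-1 torsion-free cyclic $R$-module---equivalently, an ideal $I$ of $R$. Combining both sequences and appending $0 \to I \to R \to R/I \to 0$ yields the required exact sequence. Both embeddings can, in principle, be arranged by choosing sufficiently generic maps.

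The main obstacle is in the genericity of these choices. One must pick all embeddings so that (a) the matrices of $L \hookrightarrow F_2'$ and $F_2' \to R^3$ have entries in the graded maximal ideal (ensuring minimality of the resulting resolution), and (b) the final cokernel is a genuine ideal of $R$ rather than some more general rank-1 torsion-free module. I expect a prime-avoidance or Bertini-style argument to yield appropriate generic choices, with any residual unit entries split off as trivial summands---harmlessly reducing the rank of $F_2'$. The careful execution of this simultaneous genericity--minimality step, together with verifying that the cokernel is really cyclic (and not merely rank 1), is where I expect the bulk of the technical work to lie.
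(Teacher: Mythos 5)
The paper states this result as a citation to Bruns~\cite{Bruns} and gives no proof, so there is no in-text argument to compare against; your sketch has to stand on its own merits. Your reduction is architecturally reasonable: you correctly identify that it suffices to produce a four-term exact sequence
\[
0 \to L \to F_2' \to R^3 \to R \to R/I \to 0, \qquad L := \Omega_3(M),
\]
with $\operatorname{rk} F_2' = \operatorname{rk} L + 2$, and you correctly observe that the minimal free resolution of $L$ (being an isomorphism invariant) then supplies the tail $0 \to F_n \to \cdots \to F_3 \to L \to 0$.

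The genuine gap is that the two embeddings you ask for \emph{are} the theorem, and neither follows from the soft genericity you invoke. A torsion-free module $L$ of rank $r$ embeds in $R^r$ with torsion cokernel; it does not follow that there is an embedding $L \hookrightarrow R^{r+2}$ with cokernel torsion-free of rank $2$. The one embedding of $L$ into a free module that you are handed, namely $F_3 \twoheadrightarrow L \hookrightarrow F_2$, lands in a free module of rank $\operatorname{rk} L + \operatorname{rk}\Omega_2(M)$, which is not $\operatorname{rk} L + 2$ in general; producing a \emph{new} embedding with prescribed torsion-free cokernel of rank $2$ is a Bourbaki-type lemma that needs proof, and the same is true for $N \hookrightarrow R^3$. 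Moreover, your proposed escape hatch for non-minimality --- ``split off residual unit entries as trivial summands, harmlessly reducing the rank of $F_2'$'' --- is not harmless and in fact defeats the construction. If a unit entry appears in the map $F_2' \to R^3$, cancellation replaces $R^3$ by $R^2$ and you obtain a $2$-generated ideal rather than a $3$-generated one; if it appears in $F_3 \to F_2'$, cancellation alters $F_3$ and you lose agreement with the prescribed tail; and exactness already pins $\operatorname{rk} F_2'$ at $\operatorname{rk} L + 2$, so there is no slack. You also need $L \subseteq \mathfrak{m} F_2'$, $N \subseteq \mathfrak{m} R^3$, and $I \subseteq \mathfrak{m}$ \emph{simultaneously}, together with degree compatibility in the graded setting. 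All of these constraints must be engineered into the choice of maps from the outset --- the careful prime-avoidance argument doing this is exactly the content of Bruns's proof --- rather than repaired after the fact, and your sketch defers them rather than supplying them.
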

\begin{remark}
Note that the rank of $F_2'$ may be different than that of $F_2$, but a rank calculation yields that 
$$\rank F_2' = 3 - 1 + \rank F_3 - \rank F_4  + \cdots \pm \rank F_n = 2 + \rank F_2 - \rank F_1 + \rank F_0.$$
From this, it follows that $\beta_2$ can be arbitrarily large for 3-generated ideals.
\end{remark}

Our point in presenting these results is to make plain that free resolutions are complicated --- even for ideals with 3 generators!  However, if in Example \ref{Ex:principal ideal  M = R/f} we add a further restriction for the ideal $I=(f,g)$ and require that $f$ and $g$ have no common factors (meaning that $g$ is a \emph{regular} element modulo $f$), then the only relations between $f$ and $g$ are given by the ``obvious'' relation that $gf - fg =0$.  This fact \emph{does} generalize nicely to any set $\{f_1, \ldots, f_c\}$ of homogeneous polynomials provided $f_i$ is a regular element modulo the previous $f_j$.  Such elements form what is called a {\bf regular sequence}, and the ideal they generate is resolved by the {\bf Koszul complex}. Rather than introducing the topic here, we point the reader to some of the many nice references for learning about the Koszul complex, such as \cite[Chapter 17]{Eisenbud}, \cite[Section 1.6]{BrunsHerzog}, or \cite[Example 1.1.1]{InfiniteFreeResolutions}.

The most important fact we will need about the Koszul complex is that it is a resolution (of $R/(f_1, \ldots, f_c)$) if and only if the $f_1, \ldots, f_c$ form a regular sequence, and that the Betti numbers (and ranks of syzygy modules) of the Koszul complex are given by binomial coefficients.  

\begin{theorem}\label{Theorem: Koszul Ranks}
If $I$ is an ideal generated by a regular sequence of $c$ homogeneous polynomials, then 
$$\rank \Omega_i(R/I) = {c-1 \choose i-1},$$
and therefore
$$\beta_{i}(R/I) = { c \choose i}.$$
\end{theorem}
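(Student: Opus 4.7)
The plan is to write down the Koszul complex $K_\bullet = K_\bullet(f_1,\ldots,f_c;R)$ explicitly, verify that it is a minimal free resolution of $R/I$, and then extract both numerical statements by a short computation. Recall $K_i = \bigwedge^i R^c$ is free of rank $\binom{c}{i}$, with differential
$$d(e_{j_1}\wedge\cdots\wedge e_{j_i}) = \sum_{k=1}^i (-1)^{k+1} f_{j_k}\, e_{j_1}\wedge\cdots\wedge \widehat{e_{j_k}} \wedge\cdots\wedge e_{j_i}$$
on the standard basis of $R^c$.

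The central step --- and the main obstacle --- is acyclicity: $K_\bullet$ is a free resolution of $R/I$ whenever $f_1,\ldots,f_c$ is a regular sequence. This is the classical Koszul regularity theorem, which I would invoke from \cite[Chapter 17]{Eisenbud} or \cite[Section 1.6]{BrunsHerzog} rather than reprove. The standard argument proceeds by induction on $c$ using the tensor decomposition $K_\bullet(f_1,\ldots,f_c) \cong K_\bullet(f_1,\ldots,f_{c-1}) \otimes_R K_\bullet(f_c)$ together with the long exact sequence in homology coming from multiplication by $f_c$ on the sub-complex, which is injective on $R/(f_1,\ldots,f_{c-1})$ precisely by the regular-sequence hypothesis.

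Given acyclicity, minimality is automatic: every nonzero entry of every differential is $\pm f_{j_k}$, and since each $f_i$ is homogeneous of positive degree (otherwise some $f_i$ is a nonzero constant, forcing $R/I = 0$ and making the claim vacuous), all entries lie in the irrelevant maximal ideal $(x_1,\ldots,x_n)$. Thus $K_\bullet$ is the minimal graded free resolution of $R/I$, and reading off ranks gives $\beta_i(R/I) = \rank K_i = \binom{c}{i}$.

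For the syzygy ranks I would run a descending induction on $i$ based on Remark \ref{rmk:Rank-Nullity}. The base case is $i = c$: since $K_\bullet$ has length $c$ we have $K_{c+1}=0$, so $\pi_c\colon K_c \to K_{c-1}$ is injective and $\Omega_c(R/I) \cong K_c = R$, giving $\rank \Omega_c(R/I) = 1 = \binom{c-1}{c-1}$. Assuming inductively that $\rank \Omega_{i+1}(R/I) = \binom{c-1}{i}$, Remark \ref{rmk:Rank-Nullity} yields
$$\rank \Omega_i(R/I) = \beta_i(R/I) - \rank \Omega_{i+1}(R/I) = \binom{c}{i} - \binom{c-1}{i} = \binom{c-1}{i-1}$$
by Pascal's identity, completing the induction and the proof.
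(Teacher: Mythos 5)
Your proof is correct, and it fills in a genuine proof where the paper only offers a remark sketching the idea. Both you and the paper hinge on the same key input --- the Koszul complex $K_\bullet(f_1,\ldots,f_c;R)$ is the minimal graded free resolution of $R/I$ when the $f_i$ form a regular sequence, and minimality follows because every nonzero entry of every differential is a homogeneous polynomial of positive degree --- but the numerical bookkeeping differs. The paper's remark suggests building $K_\bullet$ as the $c$-fold tensor product of the length-one complexes $0 \to R \xrightarrow{f_i} R \to 0$ and tracking the ranks of the \emph{maps} inductively through the tensoring, so that Pascal's triangle for $\rank \Omega_i$ emerges directly from the tensor decomposition; the Betti numbers then follow from Remark \ref{rmk:Rank-Nullity}. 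You go the other direction: you read $\beta_i(R/I) = \binom{c}{i}$ straight off the exterior-power description of $K_i$, and then recover $\rank \Omega_i(R/I) = \binom{c-1}{i-1}$ by a clean descending induction from the top of the resolution, again using Remark \ref{rmk:Rank-Nullity} together with Pascal's identity $\binom{c}{i} - \binom{c-1}{i} = \binom{c-1}{i-1}$. Your route is somewhat more self-contained and easier to make airtight (the tensor-product rank computation in the paper is really a gesture rather than an argument); the paper's route has the conceptual virtue of showing \emph{where} Pascal's triangle comes from structurally. One small remark: your parenthetical that each $f_i$ must have positive degree is exactly the right thing to say for minimality, and handling the degenerate constant case explicitly is a nice touch the paper glosses over.
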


\begin{remark}
To the reader not familiar with Koszul complexes, it might be instructive to carefully write out the maps involved to get a feel for how resolutions are constructed.  Essentially, the point is that the generating $i$th syzygies are built from using $i$ generators and the fact that $f_jf_i = f_if_j$.  Alternatively, perhaps the quickest way to define the Koszul complex is just to take the tensor product of the $c$ minimal free resolutions of $R/(f_i)$:
\[
\xymatrix{
0 \ar[r] &  R \ar[r]^{f_i} & R \ar[r] & 0.
}
\]
Since multiplication by $f_i$ has rank one, if one calculates the ranks in the tensor product inductively, one will see Pascal's Triangle appearing, providing a justification of the claims in Theorem \ref{Theorem: Koszul Ranks}.
\end{remark}

\subsection{How Small Can the Ranks of Syzygies Be?} 
If $I$ is an ideal that is generated by a regular sequence then as we saw in the previous section, the minimal free resolution for $R/I$ is given by the Koszul complex. For instance, if $I$ has height $8$, then $\beta_4(R/I)$ will be equal to ${8 \choose 4} = 70$,  and the syzygy module $\Omega_4(R/I)$ will have rank ${7 \choose 3} = 35$.   We will see in the next section (Conjectures \ref{BEH Conjecture} and \ref{beh ranks syzygies}) that among all ideals of height $8$ these numbers are conjectured to be the smallest possible values for $\beta_4$ and $\rank \Omega_4$ respectively. In short, these conjectures assert a relationship between the ranks of syzygies and the height (or codimension) of the ideal. Before we present these conjectures, which will occupy the remainder of the paper, we close with an example and theorem that give the sharpest possible bound for ranks of syzygies if one does not refer to codimension.  

\begin{example}[Bruns, 1976 \cite{Bruns}] \label{Ex:Bruns 2i+1} Let $R = k[x_1, \ldots, x_n]$.  There is a finitely generated module $M$ over $R$ with the following resolution: 
$$\xymatrix{0 \ar[r] & R \ar[r] & R^n \ar[r] & R^{2n-3} \ar[r] & \cdots \ar[r] & R^5 \ar[r] & R^3 \ar[r] & R \ar[r] & M \ar[r] & 0.}$$
In other words, the $i$th Betti number is $2i+1$ except for the last two Betti numbers.  This is the case for an even nicer reason: if one calculates the ranks of each syzygy module (which can be read off as the rank of the $i$th map $\pi_i$ in the resolution) one sees that the ranks are: 
$$\xymatrix{0 \ar[r] & R \ar[r]^-{1} & R^n \ar[r]^-{n-1} & R^{2n-3} \ar[r]^-{n-2} & \cdots \ar[r]^-{3} & R^5 \ar[r]^-{2} & R^3 \ar[r]^-{1} & R \ar[r]^-{0} & M \ar[r] & 0.}$$
In other words, in this example the $i$th syzygy module has rank equal to $i$, except for the last one.   This bound holds for any module, which is the content of the great Syzygy Theorem.
\end{example}

\begin{theorem}[Syzygy Theorem, Evans--Griffith, 1981 \cite{EG}]\label{The SyzygyTheorem}
Let $M$ be a finitely generated module over a polynomial ring $R$. If $\Omega_i(M)$ is not free, then $\rank \Omega \geqslant i$. Hence, if $\projdim M = p$, then 
$$\rank \Omega_i(M) \geqslant i, \ \mbox{for $i < p$.}$$
Moreover,
$$\beta_i(M) = \rank \Omega_i(M) + \rank \Omega_{i+1}(M) \geqslant \begin{cases} 
2i + 1 & \mbox{if $i < p - 1$}\\
p & \mbox{if $i =  p - 1$}\\
1 & \mbox{if $i =  p$}
\end{cases}
$$
where $\Omega_i(M)$ denotes the $i$th syzygy module of $M$.
\end{theorem}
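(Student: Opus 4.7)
The Betti number inequalities follow from the rank bound on syzygies via Remark~\ref{rmk:Rank-Nullity}, so I focus on $\rank \Omega_i(M) \geq i$ when $\Omega_i(M)$ is non-free. Granting this, since $\Omega_i(M)$ is non-free exactly when $i < p := \projdim M$ (otherwise the resolution would terminate earlier) and $\Omega_p(M) = F_p$ is free of rank $\beta_p(M) \geq 1$, Rank--Nullity yields $\beta_i(M) = \rank \Omega_i(M) + \rank \Omega_{i+1}(M) \geq i + (i+1) = 2i+1$ for $i < p-1$, $\beta_{p-1}(M) \geq (p-1) + 1 = p$, and $\beta_p(M) \geq 1$.

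\textbf{Main bound via order ideals.} Let $N = \Omega_i(M)$ be non-free of rank $r$; I want to show $r \geq i$. My plan is to pick a minimal generator $x$ of $N$ and sandwich the grade of the \emph{order ideal}
\[
O_N(x) \;=\; \{\phi(x) : \phi \in \Hom_R(N, R)\} \;\subsetneq\; R
\]
between two opposing bounds. The upper bound $\grade O_N(x) \leq r$ is a generic-rank computation: $\Hom_R(N,R)$ has rank $r$, so at a minimal prime $O_N(x)$ is cut out by at most $r$ coordinates of $x$, and since $R$ is Cohen--Macaulay (so grade equals height), this yields the estimate. The lower bound $\grade O_N(x) \geq i$ is the \emph{Order Ideal Theorem} of Evans--Griffith, applied to the non-free $i$-th syzygy $N$ and its minimal generator $x$. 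Combining, $r \geq i$.

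\textbf{The hard step.} The Order Ideal Theorem is the heart of the argument and the main obstacle. I would prove it by induction on $i$: given a short exact sequence $0 \to N \to F_{i-1} \to \Omega_{i-1}(M) \to 0$, one lifts $x$ to an element of $F_{i-1}$ and relates $O_N(x)$ to order ideals in $\Omega_{i-1}(M)$, to which the inductive hypothesis applies. The key input is the Eisenbud--Evans basic element theorem, used to produce unimodular elements in auxiliary modules, and the regularity of $R = k[x_1, \ldots, x_n]$ is used crucially to control depths. An alternative route over a polynomial ring is to invoke the Peskine--Szpiro--Roberts New Intersection Theorem, which directly yields the needed grade estimate. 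Either path requires substantial machinery that I would not reprove here, deferring to \cite{EG} for the full details.
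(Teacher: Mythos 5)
The paper does not prove this theorem at all; it is stated and cited directly to Evans--Griffith \cite{EG}, so there is no ``paper's own proof'' to compare against. Your reduction of the Betti-number inequalities to the single statement $\rank\Omega_i(M)\geq i$ for non-free $\Omega_i$ via Rank--Nullity is correct, including the three cases at $i<p-1$, $i=p-1$, and $i=p$. Your identification of the order-ideal machinery and the Order Ideal Theorem as the core of the Evans--Griffith argument is also the right call, and the appeal to the New Intersection Theorem as the deep input behind the grade lower bound is on target. One caution: the upper bound $\grade O_N(x)\leq r$ is stated a bit too casually. The clean fact is roughly that for a finitely generated torsion-free module $N$ of rank $r$ that is not free, one can find a minimal generator $x$ whose order ideal is a proper ideal of grade at most $r$; getting this requires a basic-element argument (or a careful count of how many coordinate functionals can be independent at a prime of small height), not just ``generic rank,'' and in particular one must rule out the degenerate case $O_N(x)=R$, which happens exactly when $x$ generates a free summand. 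Since you explicitly defer the heavy lifting to \cite{EG}, the sketch is reasonable, but that step deserves to be flagged as nontrivial rather than routine.
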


The Syzygy Theorem together with Bruns' example provides a sharp lower bound for $\beta_i(M)$.  Without further conditions on $M$, there is not much more we can say. However, if we add additional hypotheses on $M$ --- for instance, requiring $M$ to be Cohen-Macaulay, or of a fixed codimension $c$ --- then the bounds above appear to be far from sharp. Indeed, we will discuss a conjecture that states that in fact $\beta_i(M) \geqslant {c \choose i}$; when $c$ is large, this conjecture is much stronger than the Syzygy Theorem's bound of $2i + 1$. Note that the ideal in Example \ref{Ex:Bruns 2i+1} is of codimension $2$.

\subsection{Other Possible Directions}
Before we begin to focus on codimension, we want to say that there are many distinct and interesting alternative questions on bounds for Betti numbers that have been considered.  We present some possibilities below.

One could decide to study ideals and then fix the number of generators of $I$; for example, one could study the sets of Betti numbers of ideals defined by $5$ homogeneous polynomials. Theorem \ref{Bruns3Generated} shows that this approach will not allow for any upper bounds, except in trivial cases.  

Refining this idea, one could add a condition on the \emph{degrees} of the generators of these ideals, and for example ask what the maximal Betti numbers for an ideal with $3$ quadratic generators might be.  This question is tractable, though incredibly difficult.  Note that here we are not saying how many variables are in the ring $R$.  For instance, the largest Betti numbers possible for an ideal generated by $3$ quadrics is $\{1, 3, 5, 4, 1\}$; note that the projective dimension is $4$.  More generally, the question of whether there exists an upper bound on the projective dimension of an ideal defined by $r$ forms of degree $d_1, \ldots, d_r$ depending only on $r$ and $d_1, \ldots, d_r$, and not on the number of variables, is known as Stillman's Conjecture, and has been solved by Ananyan and Hochster \cite{StillmansConjecture} in general. The question of providing effective upper bounds is much harder, and some of the efforts in this direction can be found in \cite{BigPolynomialRings}. See \cite{StillmansConjectureBulletin} for an exposition on some of the followup results that expanded on the ideas initiated by Ananyan and Hochster in their proof of Stillman's conjecture; see also \cite{StillmansConjectureSurvey} for a survey and \cite{CraigPaoloJason,CavigliaLiang} for related work on the subject. 

We saw in sections \ref{Section:what is a free resolution} and \ref{section why} that the (graded) Betti number determine the Hilbert series; however, there can be many distinct sets $\{\beta_{ij}(M)\}$ for $R$-modules $M$ all with the same given Hilbert series.  If one fixes a Hilbert series, what are the possible sets $\{\beta_{ij}(M)\}$ for modules $M$ with Hilbert series $h(t)$?   The following theorems give a beautiful answer that provides an upper bound for the Betti numbers.

\begin{theorem}[Bigatti, 1993 \cite{Bigatti}, Hulett, 1993 \cite{Hulett}, Pardue, 1996 \cite{Pardue}]
Let $I$ be a homogeneous ideal in $R=k[x_1, \ldots, x_n]$.  Consider the set 
$$\mathcal{H} = \{ J \subseteq R \textrm{ an ideal } \ | \ HS(R/J) = HS(R/I)\}.$$
There exists an ideal $L\in \mathcal{H}$ with the property that among all ideals in $H$, the Betti numbers of $L$ are the largest: 
$$\beta_{ij}(R/J) \leqslant \beta_{ij}(R/L) \ \mbox{for all $i,j$ and for all $J\in \mathcal{H}$.}$$
\end{theorem}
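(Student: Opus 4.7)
The plan is to produce the maximizer $L$ explicitly as the \emph{lex-segment ideal} associated to the common Hilbert function, and then verify the inequality by reducing to a combinatorial comparison of monomial ideals. Concretely, define $L$ to be the monomial ideal whose degree-$d$ component is spanned, for each $d$, by the largest $\dim_k I_d$ monomials with respect to the lex order on $R$. Macaulay's classical theorem on admissible Hilbert functions guarantees that this collection of monomials is actually closed under multiplication by variables, so $L$ is an ideal in $\mathcal{H}$ and is the only candidate for the maximizer.

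The next step is to reduce the problem from arbitrary $J \in \mathcal{H}$ to the case of Borel-fixed monomial ideals. For this, fix the reverse lex order and pass to the generic initial ideal $\mathrm{gin}(J)$. A standard Gr\"obner degeneration realizes $J$ as the generic fiber of a $\mathbb{G}_m$-flat family whose special fiber is $\mathrm{gin}(J)$, and upper-semicontinuity of $\dim_k \Tor^R_i(-,k)_j$ in flat families gives
\[ \beta_{ij}(R/J) \leqslant \beta_{ij}(R/\mathrm{gin}(J)). \]
Since $\mathrm{gin}(J)$ lies in $\mathcal{H}$ and is Borel-fixed, it now suffices to prove $\beta_{ij}(R/N) \leqslant \beta_{ij}(R/L)$ for every Borel-fixed $N \in \mathcal{H}$.

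In characteristic $0$, every Borel-fixed ideal is strongly stable, and the Eliahou--Kervaire formula computes the graded Betti numbers of such an $N$ explicitly from its minimal monomial generators: writing $m(u) = \max\{t : x_t \mid u\}$,
\[ \beta_{i,i+j}(R/N) = \sum_{\substack{u \in G(N) \\ \deg u = j}} \binom{m(u)-1}{i-1}. \]
The Bigatti--Hulett combinatorial argument then shows that, among strongly stable ideals with a fixed Hilbert function, the lex ideal maximizes each of these sums in every bidegree, by a partial-order argument on multisets of $m$-values. In positive characteristic, Borel-fixed no longer implies strongly stable, and one must follow Pardue's approach: connect any Borel-fixed ideal to the lex ideal through a sequence of explicit monomial ``swap'' deformations, each of which can be shown to weakly increase every graded Betti number.

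The main obstacle, in either approach, is the last combinatorial step: establishing that lex really is extremal among (strongly stable or Borel-fixed) monomial ideals with prescribed Hilbert function. The semicontinuity passage to gins is a general (if delicate) flat-family fact, and Eliahou--Kervaire gives a clean closed formula in the strongly stable setting, but the inequality $\sum \binom{m(u)-1}{i-1} \leqslant \sum \binom{m(v)-1}{i-1}$ comparing $N$ to $L$ requires a careful combinatorial accounting; the Pardue deformation argument needed to handle arbitrary characteristic is strictly harder still, since one must exhibit and control explicit Betti-monotone moves between Borel-fixed ideals.
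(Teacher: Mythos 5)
This theorem appears in the paper only as a cited result of Bigatti, Hulett, and Pardue; the survey gives the lex-segment construction of $L$ (exactly as you describe it) but offers no proof. So there is no internal argument to compare against, and your outline is best measured against the literature.

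Your roadmap matches the standard one: (i) Macaulay's theorem to certify $L \in \mathcal{H}$; (ii) the Gr\"obner degeneration and upper-semicontinuity of $\dim_k\Tor_i(-,k)_j$ to replace $J$ by its reverse-lex $\mathrm{gin}$, which is Borel-fixed with the same Hilbert function; (iii) in characteristic $0$, the Eliahou--Kervaire formula plus the Bigatti--Hulett combinatorial comparison of the multisets $\{m(u)\}$ over minimal generators; (iv) in positive characteristic, Pardue's chain of Betti-nondecreasing moves between Borel-fixed ideals. Two caveats. First, a minor indexing slip: for a strongly stable $N$ the Eliahou--Kervaire resolution gives
\[
\beta_{i,\,i+j-1}(R/N) \;=\; \sum_{\substack{u\in G(N)\\ \deg u = j}} \binom{m(u)-1}{i-1},
\]
i.e.\ a degree-$j$ generator contributes in internal degree $j+i-1$, not $j+i$; this does not affect the strategy. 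Second, and more substantively, what you have written is a correct \emph{outline}, not a proof: the heart of the theorem is precisely the two steps you flag as obstacles, namely the Bigatti--Hulett majorization lemma on $m$-value multisets and Pardue's Betti-monotone deformations, and your proposal does not actually establish either of them. Since the paper itself supplies no proof, this is an honest and accurate summary of how the result is proved, but it should not be mistaken for a self-contained argument.
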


The ideal $L$ that achieves the largest Betti numbers in the Theorem can be described explicitly, and goes back 100 years to work of Macaulay \cite{Macaulay}; it is the known as the {\bf Lex-segment ideal}. To construct $L$, we start by going over each degree $D$ and ordering all the monomials in $R_D$ lexicographically. Then we collect the first $\dim_k (J_D)$ monomials in degree $D$, for all $D$. Macaulay showed the ideal $L$ generated by all these monomials has the same Hilbert function as our original ideal $J$; in other words, it is an ideal in $\mathcal{H}$. Bigatti, Hulett, and Pardue then showed that this special ideal has in fact the largest possible Betti numbers with the same Hilbert function as $I$. Moreover, if we fix a Hilbert polynomial, and consider all the saturated ideals $I$ with that fixed Hilbert polynomial, there is also a particular lex-segment ideal that maximizes the Betti numbers \cite{LargestBettiPolynomial}.

Finally, we remark that while this paper is devoted to the ranks of modules appearing in a minimal resolution --- that is, the study of acyclic complexes.  There has been much work devoted more generally to complexes, or even more generally to differential modules. For instance, it was conjectured in \cite[Conjecture 5.3]{AvBuIy} that if $F_\bullet$ is any complex over a $d$-dimensional local ring, and if the homology $H(F)$ has finite length, then $\sum_i \rank F_i \geqslant 2^d$. This was shown in \cite{AvBuIy} for the case when $d \leqslant 3$, and in \cite{BoocherDevries} in the multigraded setting (for all $d$).  However, the conjecture is false in general.  Indeed, in \cite{IyWa}, an example is given of a complex of $R$-modules such that $H(F)$ has length $2$ but $\sum_i \rank F_i < 2^d$ for all $d \geqslant 8$. See also \cite{Carlsson3,Carlsson2,Carlsson1,BrownErman}.

\vspace{1em}
In the remainder of the paper we will state several conjectures concerning lower bounds for the $\beta_i(R/I)$ in terms of $c = \codim R/I$. As an appetizer, notice that the Krull altitude theorem asserts that the codimension of $R/I$ must be at most the minimal number of generators, i.e.  $\beta_1(R/I) \geqslant c.$ Meanwhile, the Auslander-Buchsbaum formula above guarantees that the length of the resolution of $R/I$ is at least the codimension $c$, which implies that $\beta_c(R/I) \geqslant 1$.  With these two classical results giving us information about Betti numbers in terms of codimension, we now proceed to the main conjecture we want to focus on.


\section{The Buchsbaum--Eisenbud--Horrocks Conjecture and the Total Rank Conjecture}\label{section BEH conjecture}

\noindent In the late 1970s, Buchsbaum and Eisenbud \cite{BE}, and independently Horrocks \cite[Problem 24]{HartshorneProblems}, conjectured that the Koszul complex is the smallest free resolution possible; more precisely, that the Betti numbers of any finitely generated module are at least as large as those of a complete intersection of the same codimension as given in Theorem \ref{Theorem: Koszul Ranks}:

\begin{conjecture}[BEH Conjecture]\label{BEH Conjecture}
	Let $R = k[x_1, \ldots, x_n]$, where $k$ is a field, and $M$ be a nonzero finitely generated graded $R$-module of codimension $c$, meaning that $\het \textrm{ann}(M) = c$. Then
	$$\beta_i(M) \geqslant {c \choose i}$$
	for all $0 \leqslant i \leqslant \projdim_R M$.  
\end{conjecture}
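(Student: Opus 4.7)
The plan is to reduce to the Artinian case and then, inside the minimal free resolution $F_\bullet$ of $M$, try to produce a free subcomplex of Koszul type whose ranks realize the desired binomial coefficients.

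For the reduction step, after passing to an infinite residue field and a suitable change of coordinates, I would choose a regular sequence $\ell_1,\ldots,\ell_{n-c}$ of linear forms on $M$, whose existence follows from prime avoidance since each associated prime of $M$ has codimension at least $c$. Because each $\ell_i$ is a non-zerodivisor on both $R$ and $M$, the minimal free resolution of $M$ over $R$ descends to a minimal free resolution of $\overline{M} = M/(\underline{\ell})M$ over $\overline{R} = R/(\underline{\ell})$, preserving every $\beta_i$ and keeping the codimension equal to $c$. This reduces the conjecture to the case where $M$ has finite length and $c = n$.

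In this Artinian setting, $I = \textrm{ann}(M)$ has height $n$, hence contains a regular sequence $\ff = f_1,\ldots,f_c$. The Koszul complex $K_\bullet(\ff;R)$ resolves $R/(\ff)$ with ranks $\binom{c}{i}$, so it would suffice to exhibit a termwise split injection $K_\bullet(\ff;R) \hookrightarrow F_\bullet$. The most natural attempt uses a DG-algebra structure on $F_\bullet$: since each $f_j$ kills $M$, for every basis element $e$ of $F_0$ there is a $1$-chain $z_j \in F_1$ with $\partial z_j = f_j e$, and in the DG-algebra the wedge products $z_{i_1} \cdots z_{i_r}$ produce elements of $F_r$ indexed by $r$-subsets of $\{1,\ldots,c\}$. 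By Nakayama and the minimality of $F_\bullet$, if the $\binom{c}{r}$ products are $k$-linearly independent in $F_r \otimes_R k$, the bound $\beta_r(M) \geqslant \binom{c}{r}$ follows.

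The hard part --- and the reason BEH has remained open for nearly fifty years --- is showing that these wedge products do not collapse modulo $\fm F_r$. The DG-algebra structure is not canonical, need not exist on the minimal resolution beyond the cyclic case, and even when it exists, products of the lifted $z_j$ can become decomposable or trivial. Walker's proof of the weaker Total Rank Conjecture uses $K$-theoretic Adams operations that see only the alternating sum of ranks, and the fact that his argument fails in characteristic $2$ strongly suggests that refining these operations to resolve each $\beta_i$ separately will require genuinely new invariants rather than a direct strengthening. As tractable intermediate targets I would try to remove the hypotheses of Erman's small-regularity theorem and of Charalambous--Evans--Miller's low-codimension argument, hoping that a uniform handle on the multiplicative structure in those cases would point toward an invariant that detects each $\binom{c}{i}$ individually --- but this is precisely where I expect the proposal to get stuck.
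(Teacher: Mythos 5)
This statement is a \emph{conjecture}, not a theorem: the paper does not prove it, and neither do you. You say so honestly, and much of your discussion --- the DG-algebra strategy, the observation that Walker's Adams-operation argument only controls an alternating sum, the remark that the obstruction is the collapse of wedge products modulo $\fm F_r$ --- faithfully reproduces the state of the art that the paper surveys. So the real question is whether your reduction step is sound and whether the partial arguments you sketch are accurate.

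There is a genuine gap in the reduction. You claim that after a field extension and a linear change of coordinates you can find a regular sequence $\ell_1, \ldots, \ell_{n-c}$ on $M$ and thereby ``reduce the conjecture to the case where $M$ has finite length.'' That sequence has length $n - c = \dim M$, but a module only admits a regular sequence of length $\depth M$. If $M$ is not Cohen--Macaulay then $\depth M < \dim M$ and the process stalls before you reach the Artinian case. The paper does exactly this reduction in its Proposition on Betti numbers of Cohen--Macaulay modules being the same as those of finite length modules, but it assumes $M$ is Cohen--Macaulay for precisely this reason. For arbitrary modules the paper instead reduces via \emph{localization} at a height-$c$ minimal prime (Lemma \ref{Lemma:It Suffices to prove BEH for Finite Length}), which works for all $M$ but lands you over a regular local ring $R_P$ rather than a polynomial ring; the paper explicitly remarks that this is why the localization trick proves ``Local BEH implies finite-length Local BEH'' rather than reducing the original graded conjecture.

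Two smaller inaccuracies: the Koszul complex $K_\bullet(\ff; R)$ on a length-$c$ regular sequence inside $\ann(M)$ does not map to $F_\bullet$ by a termwise \emph{split} injection in general --- what one actually gets, when a DG structure exists, is a chain map $K_\bullet \to F_\bullet$ lifting $R/(\ff) \twoheadrightarrow R/\ann(M) \to M$, and the content of the Buchsbaum--Eisenbud argument is precisely that under the DG hypothesis this map is split (equivalently, injective modulo $\fm$). And as you note but should emphasize, the minimal free resolution of a cyclic module always carries a DG structure in $\projdim \leqslant 3$ and Gorenstein $\projdim 4$ but \emph{not} in general, and for non-cyclic $M$ the resolution is a DG module over some DG algebra resolution of $R/\ann(M)$ rather than a DG algebra itself --- so the wedge-product argument needs $M$ to be a DG module, not merely $F_\bullet$ to be a complex. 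Your diagnosis of where the approach gets stuck is correct; just be careful that the reduction step you present as unproblematic is itself the Cohen--Macaulay-only half of the paper's two reduction lemmas.
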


Actually, both Buchsbaum and Eisenbud \cite{BE} and Horrocks \cite[Problem 24]{HartshorneProblems} propose the following stronger conjecture:

\begin{conjecture}[Stronger BEH Conjecture for the ranks of the syzygies]\label{beh ranks syzygies}
	Let $R = k[x_1, \ldots, x_n]$, where $k$ is a field, and $M$ be a nonzero finitely generated graded $R$-module of codimension $c$. Then
	$$\rank(\Omega_i(M)) \geqslant {c - 1 \choose i-1}.$$
\end{conjecture}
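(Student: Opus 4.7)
The plan is to proceed by induction on the codimension $c$, isolating the step where new ideas beyond the Syzygy Theorem must enter. When $c = 0$ the inequality is vacuous, and when $c = 1$ the bound $\rank \Omega_1(M) \geqslant 1 = \binom{0}{0}$ is automatic because a module of positive codimension cannot be free. The boundary indices $i = 1$ and $i = \projdim M$ are likewise immediate from the fact that nonzero free-module ranks are at least one in a minimal resolution.

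For the inductive step I would first reduce to a Cohen--Macaulay setting. By Remark \ref{rmk:Rank-Nullity}, rank bounds on $\Omega_i(M)$ for all $i$ are equivalent, via Pascal's rule $\binom{c-1}{i-1} + \binom{c-1}{i} = \binom{c}{i}$, to the Betti-number bounds $\beta_i(M) \geqslant \binom{c}{i}$. Passing to a sufficiently high syzygy $\Omega_k(M)$ can only shift the indexing, so I would replace $M$ by such a syzygy and assume $M$ is a maximal Cohen--Macaulay module over $R/J$ for some complete intersection $J \subseteq \Ann(M)$ of height $c$.

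Next comes a codimension-reduction step: find a linear form $\ell \in R$ so that the pair $(R, M)$ specializes cleanly to $(R/(\ell), \overline{M})$ with $\overline{M}$ of codimension $c - 1$. Since modding out by an $M$-regular element of the ambient ring leaves codimension unchanged, the correct choice is $\ell \in \Ann(M)$ that is $R$-regular and lies in a minimal generating set of $J$. The long exact sequence of $\Tor^R(-,k)$ associated to $0 \to R \xrightarrow{\ell} R \to R/(\ell) \to 0$ relates the resolutions of $M$ over $R$ and $\overline{M}$ over $R/(\ell)$. The inductive hypothesis gives $\rank \Omega_i(\overline{M}) \geqslant \binom{c-2}{i-1}$, and a gluing argument should then promote this to $\binom{c-1}{i-1}$ for $\Omega_i(M)$.

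The main obstacle is precisely this gluing step in the middle range of $i$. The Evans--Griffith Syzygy Theorem \ref{The SyzygyTheorem} only supplies the linear bound $\rank \Omega_i(M) \geqslant i$, whereas Conjecture \ref{beh ranks syzygies} demands binomial growth peaking near $2^{c-1}/\sqrt{c}$ around $i \approx c/2$. No purely rank-additivity argument can bootstrap a linear bound into exponential growth, so the inductive glue must exploit finer structure on the resolution: a DG-algebra action on the minimal free resolution (in the spirit of Walker's proof of the Total Rank Conjecture when $\Char k \neq 2$), BGG/exterior-algebra translation, or representation-theoretic input on equivariant resolutions. My plan therefore legitimately reduces the statement to the Cohen--Macaulay case and pinpoints where a genuinely new ingredient is required; closing the middle-range gap is precisely the reason this conjecture has remained open for nearly fifty years.
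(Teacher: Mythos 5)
This statement is a \emph{conjecture}, not a theorem: the paper records it precisely because it is open, and indeed the text explicitly says that even the weaker Conjecture~\ref{BEH Conjecture} is unknown for $c \geqslant 5$. There is therefore no ``paper's own proof'' to compare against. Your proposal is candid about this --- you concede in the final paragraph that you cannot close the middle-range step --- so it is not a proof either, and on that basic point you and the paper agree.

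That said, several steps in your proposed reduction are flawed and would not survive even as a correct reduction of the problem. First, you assert that by Pascal's rule the rank bounds $\rank\Omega_i(M) \geqslant \binom{c-1}{i-1}$ are \emph{equivalent} to the Betti bounds $\beta_i(M) \geqslant \binom{c}{i}$. Only one direction holds: the rank bounds imply the Betti bounds via Remark~\ref{rmk:Rank-Nullity}, but knowing $\beta_i = \rank\Omega_i + \rank\Omega_{i+1} \geqslant \binom{c}{i}$ does not control the two summands individually. This asymmetry is the entire reason the paper labels Conjecture~\ref{beh ranks syzygies} the ``Stronger'' conjecture. Second, your proposal to ``pass to a sufficiently high syzygy $\Omega_k(M)$'' and treat it as a module of the same codimension is incorrect: for $k \geqslant 1$, $\Omega_k(M)$ is a submodule of a free module, hence torsion-free of codimension $0$, so the statement becomes vacuous after this step rather than inductively simpler. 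Third, the requirement of a \emph{linear} form $\ell \in \Ann(M)$ that is $R$-regular generally fails (take $M = R/(x^2)$: the annihilator contains no linear form), so the specialization step needs a deformation or degree-raising argument you have not supplied. Finally, even granting a regular element $\ell \in \Ann(M)$, the change-of-rings isomorphism $\Tor^R_n(M,k) \cong \Tor^{R/(\ell)}_n(M,k) \oplus \Tor^{R/(\ell)}_{n-1}(M,k)$ gives a clean recursion for \emph{Betti numbers}, not for \emph{ranks of syzygy modules} --- the kernels in the two resolutions are over different rings and do not split compatibly with this direct-sum decomposition --- so the inductive ``gluing'' step is not merely hard, it is unformulated. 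You are right that the Syzygy Theorem~\ref{The SyzygyTheorem} alone cannot deliver binomial growth, but the structural inputs you suggest (DG-algebra actions, BGG, equivariance) are not known to yield the rank bound either; this is exactly the state of the art the survey describes.
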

Originally, Horrocks' problem was stated for finite length modules over a regular local ring, i.e., the case that $\codim M$ was as large as possible, and equal to the dimension of the ring. On the other hand, Buchsbaum and Eisenbud were interested in resolutions of $R/I$ for a general local ring $R$. They conjectured that the minimal free resolution of $R/I$ possessed the structure of a commutative associative differential graded algebra; they then showed that if this held, and $I$ had grade $c$, then the corresponding inequalities (which they independently attribute to J\"urgen Herzog) on the ranks above would hold:

\begin{theorem}[Buchsbaum--Eisenbud, 1977, Proposition 1.4 in \cite{BE}] 
If $R/I$ has codimension $c$ and the minimal free resolution of $R/I$ possesses the structure of an associative commutative differential graded algebra, then $\beta_i(R/I) \geqslant {c \choose i}$ for all $i$.  Furthermore, the rank of the $i$th syzygy module is at least ${c - 1 \choose i-1}$.
\end{theorem}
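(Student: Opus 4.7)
My plan is to lift a regular sequence from $I$ into the DG algebra $F$ and exploit the multiplicative structure to compare $F$ with a Koszul complex. Since $\binom{c-1}{i-1}+\binom{c-1}{i}=\binom{c}{i}$, Remark~\ref{rmk:Rank-Nullity} lets me reduce the inequality $\beta_i(R/I)\geq \binom{c}{i}$ to the stronger rank-of-syzygies statement, so I will focus on proving $\rank\Omega_i(R/I)\geq \binom{c-1}{i-1}$ for $1\leq i \leq c$.

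Setup and comparison map. Because $\grade I = c$, prime avoidance supplies a regular sequence $f_1,\ldots,f_c\in I$ that may be taken as part of a minimal generating set of $I$. Each $f_i$ lifts to some $e_i\in F_1$ with $d_1(e_i)=f_i$. Using the graded-commutative associative product on $F$, set $e_S := e_{i_1}\cdots e_{i_k}\in F_k$ for every subset $S=\{i_1<\cdots<i_k\}\subseteq\{1,\ldots,c\}$. The Leibniz rule guarantees that sending the standard basis element of $K_k(f_1,\ldots,f_c)=\Lambda^k R^c$ indexed by $S$ to $e_S$ defines a DG-algebra homomorphism $\phi\colon K(f_1,\ldots,f_c)\to F$ lifting the surjection $R/(f_1,\ldots,f_c)\twoheadrightarrow R/I$ in homological degree $0$. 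From the commutative square $\phi_{i-1}\circ d_i^K = d_i^F\circ\phi_i$, the image of $d_i^F\circ\phi_i$ lies inside $\Omega_i(R/I)=\Image d_i^F$, and by Theorem~\ref{Theorem: Koszul Ranks} it has rank equal to $\binom{c-1}{i-1}=\rank\Omega_i(K)$ \emph{provided} $\phi_{i-1}$ restricts to a rank-preserving map on $\Omega_i(K)$. Once this rank-preservation is established, $\rank\Omega_i(R/I)\geq\binom{c-1}{i-1}$ follows immediately.

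Main obstacle. By minimality of $F$, the differentials on $F\otimes_R k$ vanish, so $\Tor_\bullet^R(R/I,k)=F\otimes_R k$ inherits a graded-commutative $k$-algebra structure, and the desired rank-preservation is equivalent to injectivity in each degree of the induced map $\bar\phi\colon \Lambda^\bullet k^c \to \Tor_\bullet^R(R/I,k)$. In degree $1$ this is clean: a relation $\sum \lambda_i \bar e_i=0$ pushes down under $d_1$ to $\sum \lambda_i f_i\in \mathfrak{m}I$, contradicting the choice of the $f_i$ as part of a minimal generating set of $I$. For higher degrees, graded commutativity forces $\bar e_i^2=0$ when $\Char k\neq 2$, so a hypothetical relation $\sum_{|S|=k}\mu_S\bar e_S=0$ can be multiplied by the complementary product $\bar e_T$ with $T=\{1,\ldots,c\}\setminus S_0$ to isolate the single surviving term $\pm\mu_{S_0}\,\bar e_{\{1,\ldots,c\}}$. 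The entire argument therefore collapses to a single non-vanishing assertion: $\bar e_{\{1,\ldots,c\}}\neq 0$ in $F_c\otimes_R k$, i.e., $e_1\cdots e_c\notin\mathfrak{m}F_c$.

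This top non-vanishing is the technical heart of the proof, and where I expect all the difficulty to lie. Naive attempts are insufficient: minimality forces $d(e_{\{1,\ldots,c\}})=\sum \pm f_i e_{\{1,\ldots,c\}\setminus\{i\}}$ to lie in $\mathfrak{m}F_{c-1}$ automatically, so the constraint cannot be extracted by a one-step boundary calculation. I would instead attempt induction on $c$, exploiting that $f_1$ is a nonzerodivisor on $R/(f_2,\ldots,f_c)$: a careful analysis of the action of multiplication by $e_1$ as a chain self-map of $F$, combined with the grade hypothesis propagated through the DG multiplication table, should reduce the $c$-fold non-vanishing to the $(c-1)$-fold one. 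The characteristic $2$ failure of $e_i^2=0$ is precisely the obstruction that leaves the analogous Total Rank Conjecture open in that characteristic, so I would not expect a uniform argument to cover $\Char k = 2$ without additional input (such as an auxiliary divided-power or $\Gamma$-algebra structure).
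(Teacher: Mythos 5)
The paper cites this result to Buchsbaum--Eisenbud without reproducing a proof, so I am evaluating your argument on its own terms. Your strategy --- lift a regular sequence $f_1,\ldots,f_c\subseteq I$ to $e_i\in F_1$, build the DG algebra comparison map $\phi\colon K(f_1,\ldots,f_c)\to F$, and exploit the exterior algebra $\Lambda^\bullet k^c$ inside $\Tor_\bullet^R(R/I,k)$ to force injectivity of $\bar\phi$ --- is indeed the Buchsbaum--Eisenbud approach, and your reductions (Pascal's identity to pass from the syzygy-rank bound to the Betti-number bound; Nakayama to pass from $\bar\phi_i$ injective to $\phi_i$ split injective, hence to rank-preservation of $\phi_{i-1}$ on $\Omega_i(K)$; the multiplication trick $\bar e_S\bar e_T=\pm\bar e_{\{1,\dots,c\}}$ to isolate a single coefficient) are correct. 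One small overstatement: injectivity of $\bar\phi_{i-1}$ is \emph{sufficient} for the rank-preservation you want, not \emph{equivalent} to it, but you only use the direction that holds.

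The genuine gap is that you explicitly decline to prove the one claim on which the whole argument rests: $e_1\cdots e_c\notin\mathfrak{m}F_c$, equivalently $\bar e_{\{1,\dots,c\}}\neq 0$ in $\Tor_c^R(R/I,k)$. You are right that this is the technical heart, and you are right that a naive boundary computation gets nowhere, but the inductive plan you sketch is not a proof. In Buchsbaum--Eisenbud's paper this non-vanishing is not incidental; they arrive at it via their structure theory for finite free resolutions (in particular their acyclicity criterion) together with localization at primes of grade $c$ containing $I$, and it is the real content of the proposition. As written, your proposal establishes nothing until this is supplied.

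Your characteristic-$2$ concern is a misconception and should be removed. In the definition used by Buchsbaum--Eisenbud --- and throughout the commutative algebra literature on DG algebra resolutions --- ``commutative'' means \emph{strictly} graded-commutative: $a^2=0$ for every odd-degree element $a$ is an axiom, not a consequence of $ab=(-1)^{|a||b|}ba$. So $\bar e_i^2=0$ holds in every characteristic, your multiplication step goes through in characteristic $2$, and the theorem is true there. The characteristic-$2$ obstruction in Walker's proof of the Total Rank Conjecture is a different phenomenon entirely (it concerns Adams operations on complexes, not the DG structure of resolutions) and is irrelevant to the present statement; invoking it here gives a misleading impression of where the real difficulty lies.
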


For some time it was open whether or not all resolutions could be given such a DGA structure. It turns out that this is not necessarily the case \cite[Example 5.2.2]{LuchoObstructions}, though notably any algebra $R/I$ of projective dimension at most $3$ or of projective dimension $4$ that is Gorenstein will have such a resolution \cite{BE,kustinmillerclass,kustinchar2}. See also \cite{AKM} for more on the $\projdim(R/I) \leqslant 3$ case.

\begin{remark}
Throughout, we will adopt the convention that ${n \choose k}$ is zero unless $0 \leqslant k \leqslant n$. 
\end{remark}

As a motivating example, let $R/I$ be a cyclic module of codimension $c$.  
\begin{itemize}
\item The principal ideal theorem guarantees that $I$ must be generated by at least $c$ elements, so $\beta_1(R/I) \geqslant {c\choose 1}$.  
\item The Auslander--Buchsbaum formula implies that $\projdim (R/I) \geqslant c$, which implies that $\beta_c(R/I) \geqslant {c\choose c}.$ 
\item If $I$ is generated by exactly $c$ elements, then $R/I$ is resolved by the Koszul complex, and then $\beta_i(R/I) = { c \choose i }$ for all $i$. 
\end{itemize}

If $I$ has more than $c$ generators, then $I$ will not be a complete intersection, and in general there is no structural result concerning its minimal free resolution.  However, it stands to reason (at least for optimists) that perhaps the Betti numbers can only increase as the number of generators grows and grows.

    In the rest of this paper we have two goals. First, we want to survey various generalizations of the BEH Conjecture and give the state of the art for each of these. Second, we want to include a few basic constructions and techniques that could be helpful to those who want to work in this field.  For a more thorough treatment, we refer the reader to the book \cite{SyzygiesBook} and survey article \cite{SundanceCollection}. 
    
    We have opted to give a summary of classical results on the BEH Conjecture first, but we want to point out right away that an immediate consequence of the BEH Conjecture is that if the conjecture is true, then the sum of the Betti numbers will be at least $2^c$.  This weaker conjecture, known as the Total Rank Conjecture, was proven by Walker in 2018 \cite{W}. Since then, there has been increasing evidence that apart from complete intersections, which are resolved by the Koszul complex, it may be true that in fact the sum of the Betti numbers is always at least $2^c  + 2^{c-1}$.  In the final section of the survey, we present the case for this stronger conjecture.

\subsection{General Purpose Tools} 
The BEH Conjecture is known in a surprisingly small number of cases. Indeed, as a first challenge, it is open an open question whether $\beta_2(R/I) \geqslant {5 \choose 2}$ whenever $I$ is an ideal of codimension $5$.  In this section, we present a collection of general purpose tools and use them to show that if $c \leqslant 4$ then the conjecture holds.  We also carefully describe how localization can reduce the conjecture to the finite length case, provided we work over arbitrary regular local rings. 

\begin{proposition}[Buchsbaum--Eisenbud, 1973, Theorem 2.1 in \cite{BERemarks}, see also \cite{Macaulay}]\label{prop: generalized PIT}
Suppose that $M$ is a module of codimension $c$.  Then 
$$\beta_1(M) - \beta_0(M) + 1 \geqslant c.$$
If equality holds, then $M$ is resolved by the Buchsbaum--Rim complex.
\end{proposition}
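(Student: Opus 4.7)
The plan is to compute the codimension $c$ via the zeroth Fitting ideal of $M$ and then invoke a classical bound on heights of determinantal ideals. Starting from a minimal presentation
\[
R^{\beta_1(M)} \xrightarrow{\pi_1} R^{\beta_0(M)} \longrightarrow M \longrightarrow 0,
\]
the map $\pi_1$ is represented by a $\beta_0(M) \times \beta_1(M)$ matrix with entries in the maximal homogeneous ideal. The zeroth Fitting ideal of $M$ is the ideal $I_{\beta_0(M)}(\pi_1)$ of maximal minors of $\pi_1$, and it has the same radical as $\mathrm{ann}(M)$; hence
\[
c \;=\; \Ht I_{\beta_0(M)}(\pi_1).
\]

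I would then invoke the classical Macaulay height bound for determinantal ideals: for any matrix $\phi$ of size $m \times n$ with $m \leqslant n$, one has $\Ht I_m(\phi) \leqslant n - m + 1$. Applying this with $m = \beta_0(M)$ and $n = \beta_1(M)$ immediately yields the desired inequality $c \leqslant \beta_1(M) - \beta_0(M) + 1$.

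For the equality statement, recall that the Buchsbaum--Rim complex is a canonical complex of free $R$-modules associated to $\pi_1$ that extends the presentation to the left, generalizing the Koszul complex to maps between free modules of unequal ranks. The classical acyclicity criterion of Buchsbaum and Rim asserts that this complex is exact --- and hence a free resolution of $\coker(\pi_1) = M$ --- precisely when $\Ht I_{\beta_0(M)}(\pi_1)$ attains its maximum possible value $\beta_1(M) - \beta_0(M) + 1$. Since equality in our bound is exactly this maximality condition, the equality case of the proposition follows.

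The main obstacle is really just the substance of the two classical ingredients being invoked: the Macaulay height bound (whose proof requires care with primary decomposition and determinantal geometry) and the Buchsbaum--Rim acyclicity criterion (which relies on the explicit construction of the complex via symmetric/divided power algebra techniques). One minor subtlety worth addressing is the edge case $\beta_1(M) < \beta_0(M)$, in which the maximal-minors ideal is zero; there $M$ has a nonzero free summand, so $\mathrm{ann}(M) = 0$ and the result is either vacuous or to be read under the tacit hypothesis $c \geqslant 1$ that is the setting of interest for the BEH Conjecture.
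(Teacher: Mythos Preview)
The paper does not actually supply a proof of this proposition: it is stated with attribution to Buchsbaum--Eisenbud \cite{BERemarks} (and Macaulay) and then used as a black box in the discussion of the $c\leqslant 4$ case. So there is no in-paper argument to compare against.

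Your outline is the standard proof and is correct. The three ingredients you identify --- that $\sqrt{\mathrm{Fitt}_0(M)}=\sqrt{\mathrm{ann}(M)}$, the Eagon--Northcott/Macaulay height bound $\Ht I_m(\phi)\leqslant n-m+1$ for maximal minors, and the Buchsbaum--Rim acyclicity criterion for the equality case --- are exactly what is behind the cited references. Your handling of the edge case $\beta_1(M)<\beta_0(M)$ is also the right observation: in that situation the zeroth Fitting ideal vanishes, so $\mathrm{ann}(M)=0$ and $c=0$, and the statement should be read under the standing hypothesis $c\geqslant 1$ (which is the only case of interest for BEH). Nothing is missing.
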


Note that this result includes both the Principal Ideal Theorem (when $M = R/I$ and thus $\beta_0(M) =1$) and the fact that the Koszul complex (a special instance of the Buchsbaum--Rim complex \cite{BuchsbaumRim1963}) resolves complete intersections. Below is a version of this result in terms of Betti numbers:  

\begin{corollary}\label{Cor Numerical Generalized PIT}
If $M$ is a module of codimension $c$, then 
$$\beta_1(M) \geqslant \beta_0(M) + c - 1.$$
If equality holds, then for all $i \geqslant 2$
$$\beta_i(M) = {\beta_0(M) + i -3 \choose i-2}{\beta_1(M) \choose \beta_0(M) + i - 1}.$$
\end{corollary}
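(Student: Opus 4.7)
The plan is to reduce both assertions directly to Proposition \ref{prop: generalized PIT}. For the inequality, I would simply rewrite the proposition's statement $\beta_1(M) - \beta_0(M) + 1 \geqslant c$ by adding $\beta_0(M) - 1$ to both sides; this immediately yields $\beta_1(M) \geqslant \beta_0(M) + c - 1$, with equality in the corollary precisely when equality holds in the proposition.

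For the second assertion, I would invoke the proposition's equality clause: when $\beta_1(M) = \beta_0(M) + c - 1$, the module $M$ is resolved by the Buchsbaum--Rim complex built from a minimal presentation $\varphi \colon R^{\beta_1(M)} \to R^{\beta_0(M)}$. The Betti numbers of $M$ are then literally the ranks of the free modules appearing in this complex, so the task reduces to reading those ranks off.

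Setting $f = \beta_1(M)$ and $g = \beta_0(M)$, the free modules in the Buchsbaum--Rim complex attached to $\varphi \colon R^f \to R^g$ are $F_0 = R^g$, $F_1 = R^f$, and for $i \geqslant 2$,
$$F_i \;\cong\; \wedge^{\,g+i-1}(R^f) \;\otimes\; D_{i-2}\bigl((R^g)^{*}\bigr),$$
where $D_{i-2}$ is the divided power functor (one could equivalently use $S_{i-2}$, as ranks agree). Taking ranks gives
$$\operatorname{rank} F_i \;=\; \binom{f}{\,g+i-1\,}\binom{g+i-3}{\,i-2\,},$$
and substituting $f = \beta_1(M)$, $g = \beta_0(M)$ produces exactly the formula in the statement. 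As a sanity check, the complete intersection case $\beta_0(M) = 1$, $\beta_1(M) = c$ collapses this to $\binom{c}{i}$, matching the Koszul complex.

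The main obstacle is essentially cosmetic: almost all the content is already packaged in Proposition \ref{prop: generalized PIT}, and what remains is careful bookkeeping of the index shift in the Buchsbaum--Rim complex so that the exterior-power degree $g+i-1$ and the divided-power degree $i-2$ land correctly. No new ideas are required beyond citing the standard form of that complex.
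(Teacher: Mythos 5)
The paper does not actually spell out a proof of this corollary --- it states Proposition \ref{prop: generalized PIT} and presents the corollary as its numerical translation, leaving the details to the reader. Your proposal supplies exactly the intended argument: the inequality is a trivial rearrangement, and the binomial formula is obtained by reading off the ranks of the free modules in the Buchsbaum--Rim complex on a minimal presentation $\varphi\colon R^{\beta_1} \to R^{\beta_0}$, namely $\rank F_0 = \beta_0$, $\rank F_1 = \beta_1$, and $\rank F_i = \binom{\beta_1}{\beta_0+i-1}\binom{\beta_0+i-3}{i-2}$ for $i \geqslant 2$. Your index bookkeeping is right, your observation that $D_{i-2}$ and $S_{i-2}$ give the same ranks is right, and the Koszul sanity check is apt. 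The one step worth making explicit (which you implicitly rely on) is that the Buchsbaum--Rim complex built on a \emph{minimal} presentation is itself a \emph{minimal} free resolution, so its ranks really are the Betti numbers: this holds because every differential in the complex has entries that are either entries of $\varphi$ or $\beta_0 \times \beta_0$ minors of $\varphi$, hence lie in the homogeneous maximal ideal. With that remark added, the argument is complete and matches the approach the paper intends.
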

As an exercise, the reader can prove that if $\beta_1(M) = \beta_0(M) + c - 1$ then the BEH conjecture holds, by the equality of binomial coefficients above.

Discounting cases when equality holds, this lower bound $\beta_1(M)> \beta_0(M) +c -1$ might not at first glance seem very useful, since it only gives information about $\beta_1(M)$.  However, when $M$ is Cohen-Macaulay we can use this result to also gain information about $\beta_{c-1}(M)$ as well by appealing to duality.  Indeed, if $M$ is a Cohen-Macaulay module, meaning that the codimension $c$ of $M$ is equal to its projective dimension, then applying $\Hom( - , R)$ to a resolution of $M$ will yield a resolution of $\Ext^c_R(M,R)$, another Cohen-Macaulay module. This yields the following observation:

\begin{proposition}\label{Prop about reverse of Betti sequence}
If $\{\beta_0, \ldots, \beta_c\}$ is the Betti sequence for a Cohen-Macaulay module, then so is the reverse sequence $\{\beta_c,\ldots, \beta_0\}$.
\end{proposition}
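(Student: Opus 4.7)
The plan is to exploit Matlis/local duality in its most concrete form: dualize a minimal free resolution of $M$ into $R$, verify that the result is still a minimal free resolution of another Cohen--Macaulay module of the same codimension, and read off the Betti numbers from this dualized complex.

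More concretely, let
\[
0 \to F_c \to F_{c-1} \to \cdots \to F_1 \to F_0 \to M \to 0
\]
be the minimal free resolution of $M$, where $\projdim M = c$ by the Auslander--Buchsbaum formula applied to a Cohen--Macaulay module of codimension $c$ over a polynomial ring. The first step is to apply $\Hom_R(-,R)$ to the truncated complex $F_\bullet$. The cohomology of the resulting complex computes $\Ext^i_R(M,R)$, and a standard computation (using that $M$ is Cohen--Macaulay, or equivalently that $\grade M = \projdim M = c$) shows that $\Ext^i_R(M,R) = 0$ for $i \neq c$. Thus
\[
0 \to F_0^* \to F_1^* \to \cdots \to F_{c-1}^* \to F_c^* \to \Ext^c_R(M,R) \to 0
\]
is exact, where $F_i^* = \Hom_R(F_i,R)$.

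Next I need to check two things: that this dualized complex is a \emph{minimal} free resolution, and that $N := \Ext^c_R(M,R)$ is Cohen--Macaulay of codimension $c$. Minimality is immediate because in a minimal resolution every differential has entries in the maximal graded ideal, and this property is preserved when one transposes the matrices representing the maps. For the Cohen--Macaulay statement, one checks that $N$ has projective dimension exactly $c$ (its minimal free resolution is the one just produced, of length $c$) and that its annihilator has height $c$; together with the Auslander--Buchsbaum formula this gives $\depth N = \dim N$, so $N$ is Cohen--Macaulay of codimension $c$.

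Once both facts are in hand, reading ranks off the dualized resolution yields
\[
\beta_i(N) = \rank F_{c-i}^* = \rank F_{c-i} = \beta_{c-i}(M),
\]
so the Betti sequence of $N$ is exactly the reverse of that of $M$. The only real subtlety, and the place I would be most careful, is the verification that dualizing preserves Cohen--Macaulayness with the same codimension; everything else is formal homological algebra. In the graded polynomial-ring setting this follows cleanly from the Auslander--Buchsbaum formula together with the vanishing $\Ext^i_R(M,R)=0$ for $i\neq c$, which is the standard characterization of Cohen--Macaulay modules via grade.
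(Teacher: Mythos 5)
Your proof is correct and is exactly the argument the paper has in mind: dualize the minimal free resolution by $\Hom_R(-,R)$, use the Cohen--Macaulay vanishing $\Ext^i_R(M,R)=0$ for $i\neq c$ to see the dualized complex is a (still minimal) free resolution of $\Ext^c_R(M,R)$, and observe this Ext module is again Cohen--Macaulay of codimension $c$. The paper states this only as a one-sentence remark preceding the proposition; you have supplied the standard details (length, minimality via transposed matrices, and the Auslander--Buchsbaum/grade argument for $N=\Ext^c_R(M,R)$) and the approach matches.
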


As an application of these ideas, let us use these results to prove Conjecture \ref{BEH Conjecture} for $c \leqslant 4$. We focus on $c = 3$ and $c=4$, as the smaller cases follow immediately from the principal ideal theorem.

When $\mathbf{c =3}$, Corollary \ref{Cor Numerical Generalized PIT} and Proposition \ref{Prop about reverse of Betti sequence} imply that 
$$\{\beta_0, \beta_1, \beta_2,\beta_3\} \geqslant \{ \beta_0, 3 + \beta_0 - 1, 3 + \beta_3 -1 , \beta_3 \} \geqslant \{ 1, 3, 3, 1\}$$
where the inequalities are interpreted entry by entry.

Similarly, for $\mathbf{c = 4}$ we obtain
$$\{\beta_0, \beta_1, \beta_2,\beta_3, \beta_4 \} \geqslant \{ \beta_0, 4 + \beta_0 - 1, \beta_2, 4 + \beta_4 -1 , \beta_4\} \geqslant \{ 1, 4, \beta_2, 4 , 1\}.$$

From here, we can apply the Syzygy Theorem (\ref{The SyzygyTheorem}) and notice that in a minimal free resolution
$$\xymatrix{
 0 \ar[r] & R^{\beta_4} \ar[r] & R^{\beta_3} \ar[r]^{\pi_3} & R^{\beta_2} \ar[r]^{\pi_2} & R^{\beta_1} \ar[r] & R^{\beta_0} \ar[r] & M,
}
$$
the image of $\pi_3$ is equal to $\Omega_3(M)$, and thus the rank of $\pi_3$ is at least $3$ by the Syzygy Theorem; here we used that $c = 4$, so that $\Omega_3(M)$ is not free.

Similarly, working now on the resolution of the dual $\Ext^4_R(M,R)$, we can see that the rank of $\pi_2$ must be at least $3$ as well. Hence 
$$\beta_2 = \rank \pi_2 + \rank \pi_3 \geqslant 6,$$
as required.

However, if we try the same tricks with $\mathbf{c =5}$, the best we can get is that 
$$
\{\beta_0  ,\beta_1  ,\beta_2  ,\beta_3  ,\beta_4  ,\beta_5 \}  \geqslant \{1, 5, 7, 7, 5, 1  \} .
$$
There are, however, other techniques one could use to try and complete this case:

\begin{itemize}
\item Suppose $M$ is cyclic, that is, $\beta_0 = 1$.  Then one may assume that $\beta_1 > 5$.  Indeed, if $\beta_1 = 5$, then $M$ is a complete intersection and the Koszul complex is a resolution. Surprisingly, Conjecture \ref{BEH Conjecture} is still open even if we assume $c=5$ and that $M$ is cyclic. More precisely, it is still open whether or not $\beta_2 \geqslant {5 \choose 2} = 10$.

\item One could suppose further that $\beta_1 = 6$, so $M = R/I$ is an almost complete intersection. A result of Kunz \cite{Kunz} guarantees that $R/I$ is not Gorenstein, and thus $\beta_5 \geqslant 2$. Using linkage, Dugger \cite{Dugger} was able to show in this case that $\beta_2 \geqslant 9$.
\item In general, for cyclic modules, the rank of $\pi_1$ will be 1, and thus the Syzygy Theorem implies that 
$$\beta_2 = \rank \pi_2 + \rank \pi_3 \geqslant \rank \pi_2 + 3 = (\beta_1 - \rank \pi_1) + 3 = \beta_1 + 2,$$
so whenever $\beta_0 = 1$ and $\beta_1 \geqslant 8$ we will have the BEH bound for $\beta_2$.
\end{itemize}

\vspace{1em}

We close out this section with another general technique and an application.  Let $M$ be a graded module and $P$ be a prime ideal in its support. Since localization is exact, any minimal free resolution of $M$ over $R$ will remain exact upon localization at $P$. Hence, over the local ring $R_P$, the minimal free resolution of $M_P$ must be a direct summand of this resolution.  In other words, 
$$\beta_i^{R_P} (M_P) \leqslant \beta_i^R(M).$$

We now give two applications of this idea. The first shows that if we wanted to prove a stronger version of the BEH conjecture, we could restrict to finite length modules. 

\begin{conjecture}[Local BEH Conjecture]\label{Strong BEH}
Let $R$ be a local ring and $M$ a finitely generated $R$-module of codimension $c$. Then for all $i$,
$$\beta_i(M) \geqslant {c \choose i}.$$
\end{conjecture}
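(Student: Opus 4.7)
My plan is to reduce the Local BEH Conjecture to its finite-length case through localization, the strategy foreshadowed by the paragraph preceding the statement. Given a finitely generated module $M$ of codimension $c$ over a regular local ring $R$, I would choose a prime $P \in \Supp(M)$ of height exactly $c$; such a prime exists by the very definition $c = \het \Ann(M)$, by taking a minimal prime of $\Ann(M)$ that realizes this minimum height. Localizing at $P$, the ring $R_P$ is regular local of dimension $c$, and $M_P$ is supported only at $PR_P$, hence has finite length. In particular $\codim_{R_P}(M_P) = c = \dim R_P$, placing $M_P$ squarely in the finite-length setting that Horrocks originally considered.

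The reduction is then completed by the localization inequality $\beta_i^{R_P}(M_P) \leqslant \beta_i^R(M)$ already invoked in the text just above the statement: localizing a minimal $R$-free resolution of $M$ gives an $R_P$-free resolution of $M_P$ that is typically no longer minimal, and minimization can only shrink the ranks further. Consequently, to prove the Local BEH Conjecture for $M$ it suffices to prove the inequality $\beta_i^{R_P}(M_P) \geqslant \binom{c}{i}$ for the finite-length $R_P$-module $M_P$ over the regular local ring $R_P$ of dimension $c$.

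The main obstacle is then this finite-length case itself, which remains wide open; localization offers no further leverage once the reduction is made. An attack on the finite-length case would presumably combine Proposition \ref{prop: generalized PIT} (which pins down $\beta_0$ and $\beta_1$), the reverse-sequence duality of Proposition \ref{Prop about reverse of Betti sequence} — automatic here, since a finite-length module over a $d$-dimensional regular local ring is Cohen--Macaulay of maximal codimension — and the Syzygy Theorem \ref{The SyzygyTheorem} applied to both $M$ and its $R$-dual to control the ranks of the interior syzygy modules. As the $c = 5$ discussion in the excerpt already shows, however, this mixture of inputs controls the Betti numbers at the ends of the resolution but falls short of $\binom{c}{i}$ in the middle, so I expect the genuine difficulty to lie in finding a new mechanism that forces the middle Betti numbers to be large.
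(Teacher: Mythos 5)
The statement is an open conjecture, so there is no proof of it in the paper; what you have written is not a proof but a reduction to the finite-length case, which is precisely the content and argument of Lemma~\ref{Lemma:It Suffices to prove BEH for Finite Length} immediately following the conjecture. Your choice of a height-$c$ minimal prime of $\Ann(M)$, the observation that $M_P$ has finite length over $R_P$, and the localization inequality $\beta_i^{R_P}(M_P) \leqslant \beta_i^R(M)$ all match the paper's proof of that lemma, and your closing acknowledgement that the finite-length case is where the genuine open difficulty lies is the correct and honest conclusion.
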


\begin{lemma}\label{Lemma:It Suffices to prove BEH for Finite Length}
To prove Conjecture \ref{Strong BEH}, it suffices to prove it for modules of finite length.
\end{lemma}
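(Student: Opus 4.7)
The plan is to exploit the localization inequality $\beta_i^{R_P}(M_P) \leqslant \beta_i^R(M)$ recalled just before the statement, and reduce to a prime $P$ at which $M_P$ becomes a finite length module whose codimension over $R_P$ is still $c$.

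First, given a finitely generated $R$-module $M$ of codimension $c$, I would choose $P$ to be a prime ideal that is minimal over $\Ann(M)$ and has $\het P = c$. Such a prime exists because, by definition, $c = \het \Ann(M)$ is the minimum height among primes containing $\Ann(M)$, and each such minimum is realized at some minimal prime of $\Ann(M)$. Passing to the local ring $R_P$, the module $M_P$ is supported only at the maximal ideal $PR_P$ (since $P$ is minimal in $\Supp M$), so $M_P$ has finite length over $R_P$.

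Next I would verify that $M_P$ has codimension exactly $c$ over $R_P$. Since $\Ann_{R_P}(M_P) = \Ann_R(M)_P$ is $PR_P$-primary (as $P$ is minimal over $\Ann M$), its height equals $\het(PR_P) = \het P = c$, so $M_P$ is a finite length $R_P$-module of codimension $c$. Assuming Conjecture \ref{Strong BEH} holds in the finite length case, we obtain
\[
\beta_i^{R_P}(M_P) \geqslant \binom{c}{i}
\]
for all $i$. Combining this with the localization inequality $\beta_i^R(M) \geqslant \beta_i^{R_P}(M_P)$ yields the desired bound $\beta_i^R(M) \geqslant \binom{c}{i}$, completing the reduction.

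The proof is essentially bookkeeping; the only place one must be careful is the choice of $P$ and the verification that localizing at it preserves the codimension $c$. There is no real obstacle, provided one is willing to work in the setting where $R_P$ retains whatever ambient hypotheses (e.g.\ regularity) are implicit in Conjecture \ref{Strong BEH}, which is automatic when $R$ is a regular local ring.
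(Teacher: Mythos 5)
Your argument is correct and is the same reduction as the paper's: choose a minimal prime $P$ over $\Ann(M)$ of height $c$, note that $M_P$ is then a finite length $R_P$-module of codimension $c$, and apply the localization inequality $\beta_i^R(M) \geqslant \beta_i^{R_P}(M_P)$. Your version simply spells out the supporting verifications (finiteness of length and preservation of codimension) more explicitly than the paper does.
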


\begin{proof}
Let $M$ be an arbitrary module, not necessarily of finite length. Say that $M$ has codimension $c$, and note that there must be a minimal prime $P$ of $M$ of height $c$. Then $M_P$ is a finite length module over $R_P$, and 
$$\beta_i^R(M) \geqslant \beta_i^{R_P} (M_P)$$
by our localization argument. Since $M_P$ must then have codimension $c$, the result follows.
\end{proof}

We apply this idea to the case of monomial ideals and present a short proof that the BEH conjecture holds for monomial ideals.  As we will see in Section \ref{Section strong bounds}, there are in fact stronger bounds that hold in the monomial case.  

\begin{theorem}\label{theorem: the monomial ideal case easy proof with localization}
Let $I$ be a monomial ideal of height $c$ in a polynomial ring $R$.  Then the BEH conjecture holds and 
$\beta_i(R/I) \geqslant {c \choose i}.$
\end{theorem}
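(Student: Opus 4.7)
My plan is to combine the localization technique from Lemma \ref{Lemma:It Suffices to prove BEH for Finite Length} with the fact that, for a monomial ideal, every minimal prime is generated by a subset of the variables.

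\textbf{Step 1 (Localization reduction).} Since $I$ is monomial of height $c$, I would choose a minimal prime $P = (x_{i_1}, \ldots, x_{i_c})$ of $I$. By the localization inequality used in the proof of Lemma \ref{Lemma:It Suffices to prove BEH for Finite Length}, $\beta_i^R(R/I) \geqslant \beta_i^{R_P}(R_P/IR_P)$, so it suffices to bound the Betti numbers of the localization. Inside $R_P$, every variable $x_j$ with $j \notin \{i_1, \ldots, i_c\}$ is invertible, so each monomial generator of $I$ equals a unit in $R_P$ times a monomial in $S := k[x_{i_1}, \ldots, x_{i_c}]$. Hence $IR_P = JR_P$ for a monomial ideal $J \subset S$, and minimality of $P$ forces $J$ to be primary to the maximal ideal $\mathfrak{n} = (x_{i_1}, \ldots, x_{i_c})$ of $S$, so $S/J$ has finite length. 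A faithful-flatness argument (write $R$ as a polynomial extension of $S$, then factor the localization in two steps) shows $R_P$ is faithfully flat over $S_\mathfrak{n}$, so $\beta_i^{R_P}(R_P/IR_P) = \beta_i^S(S/J)$. The problem is thus reduced to showing $\beta_i^S(S/J) \geqslant \binom{c}{i}$ whenever $J$ is an $\mathfrak{n}$-primary monomial ideal in a polynomial ring in exactly $c$ variables.

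\textbf{Step 2 (Finite length multigraded case).} Since $J$ is monomial, $S/J$ is naturally $\mathbb{Z}^c$-multigraded, of finite length, and of codimension $c$. The BEH bound in this setting is part of the Charalambous--Evans--Miller result mentioned in the introduction (indeed, the stronger bound $\beta(S/J) \geqslant 2^c + 2^{c-1}$ holds there). Alternatively, one can argue directly using Hochster's formula, which identifies each multigraded Betti number $\beta_{i,\alpha}^S(S/J)$ with the dimension of a reduced simplicial homology group of an upper Koszul complex, and then count the contributions from the $\binom{c}{i}$ ``Koszul-type'' multidegrees $\alpha \in \{0,1\}^c$ to obtain the claimed bound.

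The main obstacle is Step 2: Step 1 is essentially automatic once one observes that localizing a monomial ideal at a minimal prime collapses the ideal to a monomial ideal in just the ``relevant'' variables. Step 2 is a genuinely multigraded statement that requires either quoting Charalambous--Evans--Miller or carrying out a combinatorial computation with the multigraded Koszul complex. Granted that input, the proof is indeed ``short'' as advertised: it consists of the localization reduction combined with the finite-length multigraded BEH bound.
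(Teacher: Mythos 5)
Your Step 1 is the same localization idea as the paper's, but you have missed the paper's key preparatory move: \emph{polarize first}. The paper replaces $I$ by a squarefree monomial ideal $\tilde I$ (in possibly more variables) with the same codimension and the same Betti numbers. Because $\tilde I$ is squarefree, its primary decomposition consists entirely of monomial \emph{minimal} primes, so after localizing at such a prime $P$ of height $r\geqslant c$ one gets $\tilde I_P = P_P$ on the nose, i.e.\ the localized quotient is the residue field of $R_P$. Its minimal free resolution is literally the Koszul complex, giving $\beta_i(R_P/\tilde I_P)=\binom{r}{i}\geqslant\binom{c}{i}$ with no further input. The whole point of the paper's argument is that polarization makes Step~2 vacuous.

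Without the polarization step, your reduction only yields that $IR_P$ is an $\mathfrak n$-primary \emph{monomial} ideal $J$ in $S=k[x_{i_1},\ldots,x_{i_c}]$, not $\mathfrak n$ itself (e.g.\ $I=(x^2)$ localizes to $(x^2)$, not $(x)$). To close the gap you then need the Charalambous--Evans bound for finite-length multigraded modules (or a Hochster-formula computation), which is a substantially deeper result than the statement being proved --- it already implies the theorem once you have Step~1. So your proof is correct in outline, but it is not the self-contained ``short proof'' the paper presents: the paper's proof is elementary given polarization and the Koszul complex, whereas yours imports heavy machinery precisely because it skips the polarization reduction.
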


\begin{proof}
Our first step is to reduce to the case that $I$ is squarefree.  Indeed, if $I$ is a monomial ideal, then there is a squarefree monomial ideal (perhaps in a larger number of variables) called the \emph{polarization} of $I$ which has the same codimension and Betti numbers as $I$.

So consider the primary decomposition of a squarefree monomial ideal.  It consists entirely of minimal primes that are generated by subsets of the variables, and all must have height at least $c$. Choose any one you like and call it $P$. Note that $I_P = P_P$, since $P$ is minimal. Without loss of generality, we can assume $P = (x_1, \ldots, x_r)$ for some $r \geqslant c$. Then upon localizing $R/I$ at $P$, it is easy to see that 
$$(R/I)_P \cong R[x_1, \ldots, x_r]_{(x_1, \ldots, x_r)}/(x_1, \ldots, x_r),$$
whose Betti number are obtained from the Koszul complex on $x_1, \ldots, x_r$.
Thus 
$$\beta_i(R/I) \geqslant \beta_i(R_P/I_P) = {r \choose i} \geqslant {c \choose i}.$$
The reader will note that if we choose $r$ as large as possible, then $r$ would be the \emph{big height} of the squarefree monomial ideal $I$, that is, the largest height of an associated prime.
\end{proof}
\begin{remark}
Notice that it is \emph{not} clear that to prove the original BEH Conjecture (which was stated over a polynomial ring) one can simply study finite length modules.  Indeed, this localization argument might require one to work over localizations of polynomial rings, which despite being regular will not be polynomial rings.
\end{remark}

Finally, we include another important general result that comes up frequently.   As motivation we refer to Example \ref{Ex:3 lines in k^3} with $I = (xy,xz,yz)$.  Notice that the element $\ell = x - y - z$ is a regular element on $M = R/I$, for instance by looking at a primary decomposition.  If we work over $\overline{R} = R/(\ell)\cong k[y,z]$, then $\overline{M} \cong \overline{R}/(y^2,yz,z^2)$, which is a module of finite length. Standard arguments show that when we go modulo a regular element like this, the homological invariants (including the Betti numbers) do not change.  One application of this is the fact that the Betti numbers of Cohen-Macaulay modules are the same as those of finite length modules.  We make this sentence precise in the following:
\begin{proposition}\label{prop: Betti numbers of cohen macaulay are finite length}
Let $M$ be a Cohen-Macaulay module of codimension $c$ over the polynomial ring $R = k[x_1, \ldots, x_n]$ where $k$ is any field.   
There exist a field $k'$and a finite length module $M'$ over the polynomial ring $R' = k'[y_1, \ldots, y_{c}]$ such that the Betti numbers of $M$ and $M'$ coincide. Thus the following sets are equal: 
\begin{center}
$\{ \beta_i(M)  :  M \textrm{ Cohen-Macaulay of codimension } c \textrm{ over } k[x_1, \ldots, x_n] \textrm{ for some } k\}$

$\veq$

$\{ \beta_i(M)  :  M \textrm{ is finite length over } k[x_1,\ldots, x_c] \textrm{ for some } k \}.$
\end{center}
\end{proposition}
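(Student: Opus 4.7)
The plan is to obtain $M'$ by killing a maximal linear $M$-regular sequence, after first enlarging the base field if necessary. The numerology is right: because $M$ is Cohen-Macaulay of codimension $c$, the Auslander--Buchsbaum formula gives $\projdim_R M = c$, so the length of the resolution already matches what we want, and $\dim M = n-c$ tells us exactly how many linear forms we need to cut down by.

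First I would handle the field. If $k$ is finite, replace it by an infinite extension $k'$ (for instance $\overline{k}$), pass to $R \otimes_k k'$, and replace $M$ by $M \otimes_k k'$. Since $k'$ is flat over $k$, this preserves Betti numbers via the identification
$$\Tor_i^R(M,k) \otimes_k k' \;\cong\; \Tor_i^{R \otimes_k k'}(M \otimes_k k', k'),$$
so we may assume $k$ itself is infinite.

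Next I would build a regular sequence of linear forms of length $n-c$. Because $M$ is Cohen-Macaulay of codimension $c$, every $\mathfrak{p} \in \Ass M$ has height $c$, so $\mathfrak{p} \cap R_1$ is a proper $k$-subspace of the degree-one piece $R_1$. Over an infinite field, a finite union of proper subspaces cannot exhaust $R_1$, so we can pick $\ell_1 \in R_1$ lying in none of them; this $\ell_1$ is $M$-regular. The quotient $M/\ell_1 M$ is again Cohen-Macaulay over $R$, now of codimension $c+1$, so the process iterates. After $n-c$ steps we obtain $\ell_1, \ldots, \ell_{n-c}$ forming an $M$-regular sequence of linear forms, and a generic choice also makes them $k$-linearly independent.

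Set $R' = R/(\ell_1, \ldots, \ell_{n-c})$ and $M' = M/(\ell_1, \ldots, \ell_{n-c})M$. After a linear change of coordinates $R' \cong k[y_1, \ldots, y_c]$, and $M'$ has Krull dimension zero, hence finite length. To conclude $\beta_i^R(M) = \beta_i^{R'}(M')$ I would invoke the standard fact that reducing modulo a sequence that is regular on both $R$ and $M$ preserves Betti numbers: tensoring a minimal free resolution of $M$ with $R'$ yields a minimal free resolution of $M'$ over $R'$. The reverse inclusion in the displayed equality of sets is immediate (take $n=c$; any finite length module over $k'[y_1, \ldots, y_c]$ is itself Cohen-Macaulay of codimension $c$). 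The real obstacle is justifying the existence of the linear regular sequence, which is what forces the field extension: over a finite $k$ one may fail to find even a single linear form avoiding a finite collection of proper subspaces. Everything else---flat-base-change invariance of $\Tor$, and Betti-number invariance under quotient by a regular sequence---is routine.
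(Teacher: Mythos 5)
Your proof is correct and takes essentially the same approach as the paper: extend the field if necessary so that it is infinite, choose a maximal regular sequence of linear forms on $M$ of length $n-c$, and pass to the quotient, using flat base change and the standard invariance of Betti numbers modulo a regular sequence. The only difference is that you spell out the prime-avoidance argument (using unmixedness of Cohen--Macaulay modules and the fact that a vector space over an infinite field is not a finite union of proper subspaces) where the paper simply asserts that such a linear regular sequence exists over an infinite field.
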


\begin{proof}
Let $M$ be a Cohen-Macaulay module of codimension $c$ over $k[x_1, \ldots, x_n]$. If $k$ is infinite, set $k' = k$.  If $k$ is finite, then we may enlarge the field, say to the algebraic closure $k ' = \overline{k}$, since flat base change will not affect the Betti numbers of $M$. Set $\overline{R} = k'[x_1, \ldots, x_n]$ and $\overline{M} = M \otimes_R \overline{R}$, where $\overline{M}$ is regarded as an $\overline{R}$-module.  Note that 
$\beta_i(M) = \beta_i(\overline{M})$. Now, since we are working over an infinite field, there is a sequence of linear forms $\ell_1, \ldots, \ell_{n-c}\in \overline{R}$ that is a maximal regular sequence on $\overline{M}$.  Now let $R' = \overline{R}/(\ell_1, \ldots, \ell_{n-c})$ and set
$$M' = \overline{M} \otimes_{\overline{R}} R'.$$
Then since we have gone modulo a regular sequence, $\beta_i(M') = \beta_i(M)$, and since the $\ell_i$ were linear forms, $R'$ is isomorphic to a polynomial ring $k'[y_1, \ldots, y_c]$. 
\end{proof}

\subsection{Other Results}

As we mentioned in the previous section, the BEH conjecture \ref{BEH Conjecture} remains open for modules of codimension $c \geqslant 5$ except in a small collection of cases. There are, however, some classes of modules for which the BEH Conjecture is known.

A deformation argument was used in \cite[Remark 4.14]{HU} to show that the conjecture holds for arbitrary $c$ when $M = R/I$ and $I$ is in the linkage class of a complete intersection. Additionally, in \cite{Erman} it was shown that if the regularity of $M$ is small relative to the degrees of the first syzygies of $M$, meaning the entries in a presentation matrix for $M$, then the BEH conjecture holds.  This will be discussed more carefully in Section \ref{Section strong bounds}.

The conjecture holds also when $M$ is multigraded, meaning that $M$ remains graded no matter what weights the generators $x_i$ are given. In fact, there are several proofs of this fact, for example \cite{Chara, CE, Santoni}, but perhaps the strongest version is the result due to Brun and R\"omer, \cite{BR} which shows that if $M$ is multigraded, then in fact $\beta_i(M) \geqslant {p \choose i}$, where $p$ is the projective dimension of $M$.  Since the projective dimension can exceed the codimension, this is a much stronger result.  Such a result cannot hold more generally --- after all, there are 3-generated ideals $I$ with projective dimension $1000$, by Theorem \ref{Bruns3Generated}, and in that case $\beta_1(R/I) = 3 < { 1000 \choose 1}$.  Nevertheless, it would be interesting to know if there are other classes where ${p \choose i}$ is a lower bound for the Betti numbers.  We know of at least one other case, when the resolution of $R/I$ is linear, which we present in Theorem \ref{Theorem HerzogKuhlLinear}.  We will discuss the multigraded case in more detail in Section \ref{Section strong bounds}, when we discuss stronger bounds on Betti numbers. 

Finally, the BEH conjecture \ref{BEH Conjecture} also holds for finite length modules of Loewy length 2 over any regular local ring $(R,\fm)$, meaning modules $M$ satisfying $\fm^2 M = 0$ \cite{Chang,Burman}.

\subsection{The Total Rank Conjecture}
If the Buchsbaum--Eisenbud--Horrocks Rank Conjecture is true, an immediate corollary would be the Total Rank Conjecture, which is obtained by adding the individual inequalities:

\begin{conjecture}[Total Rank Conjecture]\label{total rank conjecture}
If $M \neq 0$ is a finitely generated graded module over $R=k[x_1, \ldots, x_n]$ of codimension $c$, then 
$$\sum_{i = 0}^c \beta_i(M) \geqslant 2^c.$$
\end{conjecture}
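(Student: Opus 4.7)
My plan is to carry out two reductions and then apply $K$-theoretic techniques, specifically Adams operations on a Grothendieck group, in the spirit of Walker's proof. The direct route through the BEH Conjecture is unavailable, since that conjecture is still open; but the total sum is a much coarser invariant and admits its own, independent attack.

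First, I would reduce to the case that $M$ is a finite length module over a localized polynomial ring in exactly $c$ variables. Choose a minimal prime $P$ of $\mathrm{ann}(M)$ of height $c$; localizing gives $\beta_i^{R}(M) \geqslant \beta_i^{R_P}(M_P)$ with $M_P$ a finite length module over the regular local ring $R_P$ of dimension $c$ (the same kind of argument as in Lemma \ref{Lemma:It Suffices to prove BEH for Finite Length}). After enlarging the field if needed and killing a maximal regular sequence of linear forms, exactly as in the proof of Proposition \ref{prop: Betti numbers of cohen macaulay are finite length}, the ambient ring becomes $S = k[x_1,\ldots,x_c]_{(x_1,\ldots,x_c)}$ without changing any Betti numbers. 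The target inequality $\sum_i \beta_i \geqslant 2^c$ is therefore reduced to this local, finite-length setting.

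The core of the argument then takes place in the Grothendieck group $K_0$ of perfect complexes over $S$ with finite length homology, a group free of rank one and detected by the length of the homology. The minimal free resolution $F_\bullet$ represents $[M]$ there, but its total rank $\sum_i \beta_i$ is strictly finer than the class $[M]$ itself. The extra structure I would exploit is supplied by the Adams operations $\psi^q$ on $K_0$, available whenever $q$ is invertible in $k$. The plan is to compute $\psi^q([F_\bullet])$ in two ways: once term by term from the free modules $F_i = S^{\beta_i}$, producing an expression polynomial in $q$ whose coefficients are controlled linearly by the $\beta_i$; and once globally via a Riemann--Roch/Gillet--Soul\'e-style identity, producing $q^c$ times a nonzero multiple of $[M]$. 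Matching leading coefficients and specializing at $q = 2$ (the natural choice tuned to the $2^c$ bound) then forces $\sum_i \beta_i \geqslant 2^c$.

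The principal obstacle, and the hard part I do not see how to surmount, is the characteristic two case. The operation $\psi^2$ is precisely the one calibrated to produce the bound $2^c$, but it only exists in its usable form when $2 \in k^\times$. Odd Adams operations give only weaker bounds, and no clean reduction from characteristic two to odd characteristic is known to preserve Betti numbers; naive lifts of modules do not preserve projective dimension, and hypersurface deformations change the invariants we are trying to control. This is exactly the gap in Walker's theorem as cited in the introduction, and my expectation is that the same obstacle would stop this proposal at the same place. Closing it seems to require genuinely new input, perhaps via matrix factorizations over hypersurface rings or a purely combinatorial argument on the resolution itself that bypasses $K$-theory.
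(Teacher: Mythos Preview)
The paper does not supply a proof of this statement: it is recorded as a \emph{conjecture}, with the $\operatorname{char} k \neq 2$ case attributed to Walker \cite{W} and the characteristic-$2$ case noted as open. So there is no in-paper proof to compare against; your proposal is in effect a sketch of Walker's cited argument, and your closing paragraph correctly identifies exactly the gap the paper also records.

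Two remarks on the sketch itself. Your reduction is slightly tangled: localizing at a height-$c$ minimal prime already yields a finite-length module over a $c$-dimensional regular local ring, which is the setting Walker needs; the subsequent appeal to Proposition~\ref{prop: Betti numbers of cohen macaulay are finite length} is misplaced (that is a graded technique for Cohen--Macaulay modules over polynomial rings, not available after localization) and in any case unnecessary. In the $K$-theoretic core, the Gillet--Soul\'e eigenvalue identity $\psi^q = q^c$ on the relevant Grothendieck group is indeed the engine, but the argument does not proceed by computing $\psi^q$ ``term by term from the free modules $F_i$'' and then specializing a polynomial identity at $q=2$. One works with $\psi^2$ directly, realized concretely via the Newton relation as $[\mathrm{Sym}^2 F] - [\Lambda^2 F]$, and bounds the length of its homology by $\beta(M)\,\ell(M)$; equating this with $2^c\,\ell(M)$ gives the inequality. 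The need for $2$ to be a unit enters in making this concrete realization behave well, which is why your characteristic-$2$ diagnosis is on target.
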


This Conjecture was settled in 2018 by Walker \cite{W}, except in the case that $k$ has characteristic $2$. In fact, Walker's result also applies to finitely generated modules over an arbitrary local ring $R$ containing a field of odd characteristic. This result truly was a breakthrough in the field.

Even though the Total Rank Conjecture is settled (except in characteristic two), we cannot resist sharing some of the beautiful historical results in this story and compare them with the modern treatment. For example, the odd length case has a simple solution via elementary methods:

\begin{lemma}\label{Lemma:OddLength}
	Suppose that $M$ is a finitely generated $R$-module of (finite) odd length over $R=k[x_1, \ldots, x_n]$. Then 
	$$\sum_{i = 0}^n \beta_i(M) \geqslant 2^n.$$
\end{lemma}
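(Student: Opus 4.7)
The plan is to exploit the Hilbert series formula \eqref{equation for HS} and evaluate it at $t=-1$, where the factor $(1-t)^n$ becomes $2^n$. Throughout we may assume $M$ is graded (which is the setting in which Betti numbers and Hilbert series cohere); the case $n=0$ is immediate since then $R=k$ and $\beta_0(M)=\ell(M)\geqslant 1=2^0$, so assume $n\geqslant 1$.

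The first step is to record the Hilbert-series identity. Since $M$ has finite length, $HS(M)(t)$ is a polynomial, and by \eqref{equation for HS}
\[
HS(M)(t)\,(1-t)^n \;=\; \sum_{i,j}(-1)^i\beta_{ij}(M)\,t^j.
\]
Evaluating this identity at $t=-1$ gives
\[
2^n\,HS(M)(-1) \;=\; \sum_{i,j}(-1)^{i+j}\beta_{ij}(M).
\]
The second step is the crude bound obtained by pulling absolute values inside the right-hand sum:
\[
\Bigl|\sum_{i,j}(-1)^{i+j}\beta_{ij}(M)\Bigr| \;\leqslant\; \sum_{i,j}\beta_{ij}(M) \;=\; \sum_{i=0}^n\beta_i(M).
\]
Combining the two displayed relations yields $\sum_i \beta_i(M)\geqslant 2^n\,|HS(M)(-1)|$, so the proof reduces to showing $HS(M)(-1)\neq 0$.

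The third and final step is the parity observation that forces $HS(M)(-1)$ to be nonzero when $\ell(M)$ is odd. Writing $h_j=\dim_k M_j$, we have $HS(M)(-1)=\sum_j(-1)^j h_j$, and since $(-1)^j\equiv 1\pmod 2$,
\[
HS(M)(-1)\;\equiv\;\sum_j h_j\;=\;\ell(M)\pmod 2.
\]
So when $\ell(M)$ is odd, $HS(M)(-1)$ is an odd integer, in particular $|HS(M)(-1)|\geqslant 1$, and we conclude $\sum_i\beta_i(M)\geqslant 2^n$.

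The argument is short and there is no real obstacle; the only subtlety is spotting that evaluating the Hilbert-series identity at $t=-1$ is what manufactures the desired $2^n$, and that the parity of $\ell(M)$ is exactly what is needed to keep $HS(M)(-1)$ away from zero. Note in particular why the proof breaks for \emph{even} length: $HS(M)(-1)$ can legitimately vanish (e.g.\ $k[x]/(x^2)$ has Hilbert polynomial $1+t$, vanishing at $-1$), and this is precisely where deeper tools --- such as Walker's Adams-operation techniques \cite{W} --- become necessary.
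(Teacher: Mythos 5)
Your proof is correct and follows the same approach as the paper's: evaluate the Hilbert-series identity $HS(M)(t)(1-t)^n = \sum_{i,j}(-1)^i\beta_{ij}t^j$ at $t=-1$, bound the right side by $\sum_i\beta_i(M)$, and use the parity of $\ell(M)$ to conclude $HS(M)(-1)\neq 0$. (Incidentally, where the paper writes that $h_0+h_1+\cdots+h_r$ is ``the rank of $M$,'' it means the \emph{length}, as you correctly state.)
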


\begin{proof}
The Hilbert series $h_M(t)$ of $M$ is a polynomial in $t$, say $h_M(t) = h_0 + h_1 t + \cdots + h_r t^r$. We can also write it as
	$$h_M^R(t) = \frac{\sum_{i,j} (-1)^i \beta_{i,j}(M) t^j}{(1-t)^n}.$$
	Plugging in $t=-1$, we obtain
	$$2^n h_M^R(-1) = \sum_{i,j} (-1)^{i+j} \beta_{i,j}(M),$$
	so
	$$2^n \left|h_0 - h_1 + \cdots + (-1)^r h_r \right| = \left| \sum_{i,j} (-1)^{i+j} \beta_{i,j}(M) \right| \leqslant \sum_i \beta_i(M).$$
	On the other hand, $h_0 + h_1 + \cdots + h_r$ is the rank of $M$, which we assumed to be odd. Therefore, $h_0 - h_1 + \cdots +(-1)^r h_r$ is also odd, and thus nonzero. In particular,\begin{equation*}
	2^n \leqslant \sum_{i=0}^n \beta_i(M).\qedhere\end{equation*}
\end{proof}

In other words, for modules of finite odd length, the Total Rank Conjecture holds simply due to constraints on its Hilbert function.  In 1993, Avramov and Buchweitz were able to obtain a generalization of this fact in \cite{ABO}. Their most general bound was that if $d\geqslant 5$ and $M$ is a module of finite length over $R$, then
$$\sum_{i=0}^d \beta_i(M) \geqslant \frac32 (d-1)^2 + 8.$$
In particular, this shows that when $d =5$ the lower bound of $32 = 2^5$ in the Total Rank Conjecture does hold. Their results were in fact much finer, depending on the prime factors of the length of $M$, $\ell(M)$.  For instance, they show that
\begin{itemize}
\item If $\ell(M)$ is odd, then $\sum \beta_i(M) \geqslant 2^d$, so they recover the above result.
\item If $\ell(M)$ is even but not divisible by 6, then 
$\sum \beta_i(M) \geqslant 3^{d/2} \geqslant 2^{0.79d}$.
\item If $\ell(M)$ is divisible by 6 but not by 30, then 
$\sum \beta_i(M) \geqslant 5^{d/4} \geqslant 2^{0.58d}$.
\item If $\ell(M)$ is divisible by 30 but not by 60, then 
$\sum \beta_i(M) \geqslant 2^{(d+1)/2}$.
\end{itemize}

If we move forward 25 years, the following is a summary of 
Walker's results:

\begin{theorem}[Walker, 2018 \cite{W,Walkerv1}]
Let $M$ be a finitely generated module of codimension $c$ over $k[x_1, \ldots, x_n]$. 
\begin{itemize}
\item If $\Char{k} \neq 2$, then $\sum \beta_i(M) \geqslant 2^c.$
\item If $\Char(k) = 2$, then $\sum \beta_i(M) \geqslant 2{(\sqrt3)}^{c-1} > 
2^{0.79c+0.208}.$
\end{itemize}
\end{theorem}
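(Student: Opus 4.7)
The plan is to reduce to finite length modules over a regular local ring of dimension $c$ and then to extract the exponential lower bound from a K-theoretic invariant of the minimal free resolution. Adams operations $\psi^n$ on the Grothendieck group of perfect complexes are the natural source of such invariants, and they are well-behaved only when $n$ is invertible in the residue field: this dictates $n=2$ when $\Char k \neq 2$ and $n=3$ in characteristic $2$, and is the source of the dichotomy in the statement.

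First I would reduce: localizing $M$ at a minimal prime $P$ of height $c$ gives a finite length module $M_P$ over the regular local ring $R_P$, and $\beta_i(M) \geqslant \beta_i^{R_P}(M_P)$ as in the localization argument leading to Lemma \ref{Lemma:It Suffices to prove BEH for Finite Length}. After a suitable faithfully flat base change, I may assume that $(R,\fm)$ is a regular local ring of dimension $c$ with residue field $k$ and that $M$ has finite length, with minimal free resolution $F_\bullet$. Next I would attach to $F_\bullet$ a rational invariant $\chi^{\psi^n}(F_\bullet)$ built by applying the Adams operation $\psi^n$ term-by-term (i.e.\ via its decomposition into exterior powers) and taking the resulting Euler characteristic. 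Two inequalities are needed. An upper bound of the form $|\chi^{\psi^n}(F_\bullet)| \leqslant C_n \cdot \sum_i \rank F_i$, obtained by expanding $\psi^n$ as a signed polynomial in exterior powers and bounding each summand by the rank of the underlying free module. A lower bound of the form $|\chi^{\psi^n}(F_\bullet)| \geqslant n^c \cdot \ell(M)$, which is a Riemann--Roch-type identity in the spirit of Gillet--Soul\'e: intersection with the class $[M] \in K_0$ of a module supported in codimension $c$ is scaled by $n^c$ under $\psi^n$. Calibrating the construction so that $C_2 = 1$ yields $\sum_i \rank F_i \geqslant 2^c$ in odd characteristic on the nose. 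In characteristic $2$ the replacement by $n=3$ does not interact with the Koszul model as cleanly, and a refined eigenvalue analysis gives only the weaker bound $2\,(\sqrt 3)^{c-1}$.

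The central difficulty is constructing $\chi^{\psi^n}$ with both inequalities intact. Adams operations are classical on $K_0$ of vector bundles, but one needs a lift to perfect complexes with finite length homology for which both the Euler-characteristic interpretation (giving the upper bound) and the Riemann--Roch-type lower bound hold with sharp constants. This is the technical heart: it requires cycle-level representatives of $\psi^n$, constructed using differential graded algebra techniques and careful control of how $\psi^n$ interacts with minimal free resolutions. The characteristic-$2$ case is genuinely harder because $\psi^2$ is unavailable; the next best choice $\psi^3$ fails to produce the sharp binary factor $2^c$, and closing this gap is exactly the remaining open case of the Total Rank Conjecture.
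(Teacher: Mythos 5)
The paper itself does not prove this theorem; it is stated as Walker's result and cited to \cite{W,Walkerv1}, so there is no ``paper proof'' to compare against. Measured against Walker's published argument, your outline has the right skeleton: reduction to finite length modules over a regular local ring of dimension $c$, passage to $K$-theory of perfect complexes with finite-length homology, the use of Adams operations $\psi^n$ for a prime $n$ invertible in $k$ (hence $n=2$ away from characteristic $2$ and $n=3$ in characteristic $2$), and the need for \emph{cycle-level} (cyclic, Atiyah-style) representatives of $\psi^n$ acting directly on minimal free resolutions rather than $\lambda$-ring operations on $K_0$. Your Gillet--Soul\'e-type lower bound $|\chi(\psi^n F)| \geqslant n^c\,\ell(M)$ is also essentially correct (in fact an equality over regular local rings).

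The gap is in your upper bound. The claimed inequality $|\chi(\psi^n F)| \leqslant C_n \sum_i \rank F_i$ with $C_n$ an absolute constant (and $C_2=1$) is false: take $R=k[[x]]$ and $M=R/(x^2)$, so $F$ is $R\xrightarrow{x^2}R$ with $\sum_i\rank F_i = 2$, yet $\chi(\psi^2 F) = 2^1\cdot\ell(M) = 4$. The correct upper bound necessarily carries a factor of $\ell(M)$, which is what cancels the $\ell(M)$ on the other side. For $n=2$ one argues, using $\psi^2(F)=[S^2F]-[\Lambda^2F]$ and $S^2F\oplus\Lambda^2F\cong F\otimes_R F$,
\begin{equation*}
\chi(S^2F)-\chi(\Lambda^2F)\;\leqslant\;\sum_i\bigl(\ell(H_i(S^2F))+\ell(H_i(\Lambda^2F))\bigr)\;=\;\sum_i\ell\bigl(\Tor_i^R(M,M)\bigr)\;\leqslant\;\ell(M)\sum_i\beta_i(M),
\end{equation*}
the last step because $\Tor_i^R(M,M)$ is a subquotient of $F_i\otimes_R M\cong M^{\beta_i}$. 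Combined with the Riemann--Roch identity this gives $2^c\,\ell(M)\leqslant\ell(M)\sum_i\beta_i(M)$, hence $\sum_i\beta_i(M)\geqslant 2^c$. So the strategy is right, but the two inequalities must be calibrated with the $\ell(M)$ factor on both sides; as you wrote them, the argument does not close. The characteristic-$2$ remarks are in the right spirit: $\psi^2$ is unavailable, and the analogous bookkeeping with $\psi^3$ is lossier and produces the weaker $2(\sqrt 3)^{c-1}$.
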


While the Total Rank Conjecture remains open in characteristic $2$, for that case Walker \cite[Theorem 5]{Walkerv1} did give the above bound of $2{(\sqrt3)}^{d-1}$, which improves the previous bounds by Avramov and Buchweitz \cite{ABO}.   We also remark that the Total Rank Conjecture is related to the Toral Rank Conjecture of Halperin \cite{Halperin}. For a survey on this and related results, see \cite{Munoz,Carlsson3,Carlsson2,Carlsson1}.

In the following table, we indicate the current status (as of the writing of this survey) of both the Total Rank Conjecture \ref{total rank conjecture} and the BEH Conjecture \ref{BEH Conjecture}.  The reader may want to refer to Table \ref{table: not a ci} at the end of the paper to see what the case for stronger bounds is.

\begin{savenotes}
\begin{table}[H]
\centering
{\renewcommand{\arraystretch}{2}
\begin{tabular}{| l | c | c |}
    \cline{2-3}
\multicolumn{1}{c|}{}
    & $c \leqslant 4$ & $c \geqslant 5$ \\
    \hline
    \multirow{3}{*}{$\beta_i \geqslant {c \choose i}$} & follows from the &  \\
    & Syzygy Theorem & \textcolor{purple}{Open}\\
    & (Evans--Griffith 1981) \cite{EG}& \\
    \cline{1-3}
    \multirow{2}{*}{$\sum_i \beta_i \geqslant 2^c$} & \multirow{2}{*}{\textcolor{OliveGreen}{follows from box above \checkmark}} & $c=5$ (Avramov--Buchweitz, 1993) \cite{ABO} \\
    && all $c$ char$(k) \neq 2$ (Walker, 2018) \cite{W} \\
    \hline
\end{tabular}
}
\caption{Status of the BEH and Total Rank Conjectures for a module of codimension $c$}
\label{table: BEH vs total rank}
    \end{table}
    \end{savenotes}

\newpage \section{Stronger bounds}\label{Section strong bounds}
We now turn to the question of whether there are larger bounds for Betti numbers and whether or not these bounds are achieved, starting with Walker's result \cite{W}.  

\begin{theorem}[Walker, 2018, Theorem 1 in \cite{W}]
Suppose that $\Char k \neq 2$, and let $M$ be a finitely generated graded $k[x_1, \ldots, x_n]$-module of codimension $c$.
Then $$\sum_{i=0}^c \beta_i(M) \geqslant 2^c$$
with equality if and only if $M$ is not a complete intersection. 
\end{theorem}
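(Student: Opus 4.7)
The plan splits into proving the inequality $\sum_i \beta_i(M) \geqslant 2^c$ and then analyzing when equality holds. For the inequality, my first step is to reduce to finite length modules over a polynomial ring in exactly $c$ variables. Localizing $M$ at a minimal prime of codimension $c$ in its support only shrinks Betti numbers (exactly as in the proof of Theorem~\ref{theorem: the monomial ideal case easy proof with localization}), producing a finite length module over a regular local ring of dimension $c$; and then, after flat base change to an infinite field of odd characteristic and killing a maximal regular sequence of linear forms as in Proposition~\ref{prop: Betti numbers of cohen macaulay are finite length}, I may assume $M$ is a finite length module over $k[x_1,\ldots,x_c]$ with $\Char k \neq 2$.

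The core of Walker's argument, which I would follow, is to construct on the Grothendieck group of bounded complexes of finite free $R$-modules with finite-length homology a refined, Adams-operation-based invariant whose value on $M$ is bounded below by $2^c$ times the multiplicity. Concretely, the Adams operation $\psi^2$ on the Grothendieck group of perfect complexes satisfies a positivity identity that, combined with the Hilbert-series evaluation $2^c\,h_M(-1)$ already exploited in the proof of Lemma~\ref{Lemma:OddLength}, forces the alternating sum of ranks to split into two non-negative contributions summing to at least $2^c$, which in turn bounds the total rank $\sum_i \beta_i(M)$ from below. The main obstacle — and the reason $\Char k = 2$ is excluded — is precisely the positivity step: the very construction of $\psi^2$ and its decomposition into $\pm 1$-eigenspaces relies on inverting $2$, so this technology collapses in characteristic~$2$, which is exactly why Walker's odd-characteristic hypothesis is essential.

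For the equality case, the Koszul complex resolving $R/(f_1,\ldots,f_c)$ for a regular sequence $f_1,\ldots,f_c$ has $\beta_i = \binom{c}{i}$, whose sum is exactly $2^c$, so every complete intersection is an equality case; conversely, saturating every inequality in the Adams-operation argument forces the minimal resolution of $M$ to have Betti numbers $\binom{c}{i}$, and then applying the generalized principal ideal theorem (Proposition~\ref{prop: generalized PIT}) together with the Buchsbaum--Rim structure it prescribes identifies $M$ with $R/(f_1,\ldots,f_c)$ for a regular sequence. I should flag that the theorem as typeset reads ``if and only if $M$ is \emph{not} a complete intersection,'' whereas the Koszul computation above together with the converse argument shows equality in fact characterizes complete intersections; this matches the introduction of the paper, which explicitly states ``equality holds if and only if $M$ is isomorphic to $R$ modulo a regular sequence.'' The word ``not'' in the boxed statement therefore appears to be a typographical slip, and the argument I have outlined establishes the intended biconditional.
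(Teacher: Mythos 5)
You are right that the equality clause as typeset (``if and only if $M$ is \emph{not} a complete intersection'') is a slip: the introduction and the discussion immediately following the theorem both say that equality characterizes complete intersections, so flagging this was correct. Note also that this statement is an attributed theorem of Walker for which the survey gives no proof, so there is nothing in the paper to compare your argument against step by step; what you have written is an outline of Walker's own strategy rather than an alternative route.

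As a proof, however, the proposal has genuine gaps. The central step --- that the Adams operation $\psi^2$ on the Grothendieck group of perfect complexes with finite-length homology satisfies a positivity property which, combined with the evaluation $2^c h_M(-1)$ used in Lemma \ref{Lemma:OddLength}, forces $\sum_i \beta_i(M) \geqslant 2^c$ --- is left as a black box. That step \emph{is} the theorem; nothing in the paper supplies it, so as written the proposal assumes what is to be proved (correctly identifying that inverting $2$ is what fails in characteristic $2$ does not substitute for the argument). Second, your reduction is internally inconsistent: after localizing at a minimal prime $P$ of height $c$ you are over the regular local ring $R_P$, which is not a polynomial ring, and $M_P$ already has finite length (hence depth zero), so there is no regular sequence of linear forms left to kill; Proposition \ref{prop: Betti numbers of cohen macaulay are finite length} applies to Cohen--Macaulay modules over a polynomial ring and cannot be chained after localization. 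The remark following Theorem \ref{theorem: the monomial ideal case easy proof with localization} flags exactly this issue. The localization step alone does suffice, but only because Walker proves his theorem over arbitrary local rings containing a field of odd characteristic --- a fact you would need to invoke in place of the polynomial-ring reduction you describe. Finally, in the equality case you assert that saturating the inequalities forces $\beta_i(M) = \binom{c}{i}$; this is the delicate half of Walker's analysis and is not justified here, though once it is granted, your appeal to Proposition \ref{prop: generalized PIT} to identify $M$ with the quotient by a regular sequence is sound.
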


\begin{remark}
The situation where we have a module $M \cong R/I$ with $I$ an ideal generated by a regular sequence is very important, and we will want to distinguish it from any other kind of module; we will abuse notation\footnote{This is an abuse of notation since the expression ``complete intersection'' typically refers to a ring, not a module.} and say that $M$ is a {\bf complete intersection}. We will say that a module $M$ is not a complete intersection whenever $M$ is not isomorphic to any quotient of $R$ by a regular sequence; in particular, when we refer to modules $M$ that are not a complete intersection, we will include any non-cyclic module.
\end{remark}

Notice that this theorem says that the only time that the Betti numbers sum to $2^c$ is in the case of a complete intersection. Surprisingly, the next smallest value for the sum of the Betti numbers that we know of is $2^c + 2^{c-1}$, which is 50\% larger than the bound of $2^c$.  The next two examples show how to achieve this value. Notice that in one example this stems from the fact that $1 + 3 + 2 = 6$, whereas in the other it is because $1 + 5 + 5 + 1 = 12$.  

\begin{example}\label{Ex:132 and cones}
Let $I$ be the ideal $(x^2,xy,y^2)$ in $R = k[x,y]$.  Then $R/I$ is a finite length module of codimension $c =2$ with Betti numbers $\{1,3,2\}$.  Notice that these sum to $6$ which is $2^2 + 2^{2-1}$.  

By adding new variables (to $R$ and also to $I$) we can extend this example to any $c\geqslant 2$.  Indeed, set $R = k[x,y,z_1, \ldots, z_{c-2}]$, and let $I= (x^2, xy,y^2, z_1^2,z_2^2, \ldots, z^2_{c-2})$. Then the minimal free resolution of $R/I$ is obtained by tensoring the Koszul complex on $\{z_1, \ldots, z_{c-2}\}$ with the minimal free resolution of $R/(x^2,xy,y^2)$. Thus
\vspace{0.5em}
$$	\beta_i(R/I) =  {c-2 \choose i} + 3{ c-2\choose i-1} +2{c-2 \choose i-2}  $$
and we see that $\sum \beta_i(R/I) = 2^c+2^{c-1}.$
We chose to adjoin $z_i^2$ just so that our generators were all in the same degree, but one could choose these additional generators to be of any degree. Note that in all of these examples $I$ is {\bf monomial} and $R/I$ is {\bf of finite length.}
\end{example}

\begin{example}\label{Ex: the flength ideal with 1551} 
Consider the ideal 
$$G = (x^2,y^2,z^2, xy-yz,yz-xy)$$
in the ring $R = k[x,y,z]$.   The height of $G$ is 3 and the Betti numbers of $R/I$ are $\{1, 5, 5, 1\}$.  Note that $1 + 5 + 5 + 1 = 2^3 + 2^2$.  

Let $c \geqslant 3$.  Then as in Example \ref{Ex:132 and cones}, we can just add generators in new variables, say 
$$I = G + (z_1^2, \ldots, z_{c-3}^2)$$
and after tensoring with a Koszul complex we have that
$$\beta_i(R/I) = 
{c-3 \choose i-3} + 5{c-3 \choose i-2} + 5{c-3 \choose i-1} + {c-3 \choose i}.
$$
Therefore, 
$$\sum \beta_i(R/I) = 2^c+2^{c-1}.$$  
Note that all of the modules $R/I$ in this example are {\bf of finite length.}
\end{example}

\begin{example}\label{Ex: 1551 Monomial}
If one repeats Example \ref{Ex: the flength ideal with 1551} with $R = k[x,y,z,u,v]$ and $J = (xy,yz,zu,uv,vx)$ playing the role of $G$, then the numerics are exactly the same. $R/J$ has codimension $3$ and the Betti numbers are $\{1,5,5,1\}$.    The analogous examples obtained by adding new generators will all be monomial but {\bf not of finite colength.}  This distinction is important, because we will later see in Corollary \ref{HaraStrongBoundGamma} that there are bounds on the individual Betti numbers for monomial ideal of finite colength that do not hold for monomial ideals more generally, nor for general ideals of finite colength. 
\end{example}

The following result in \cite{CEM} shows that for modules that are not complete intersections, this behavior of Betti numbers adding up to ``50\% more than $2^c$'' does hold for $c\leqslant 4$:

\begin{theorem}[Charalambous--Evans--Miller, 1990, Theorem 3 in \cite{CEM}]\label{thm: CEM  c  leq 4}
Let $M$ be a finitely generated graded module of height $c$ over a polynomial ring. Suppose $M$ is not a complete intersection. If $c\leqslant 4$, then $\sum \beta_i(M) \geqslant 2^c + 2^{c-1}$. 
\end{theorem}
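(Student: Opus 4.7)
The plan is to argue by case analysis on $c \in \{1,2,3,4\}$, separating the Cohen--Macaulay and non-Cohen--Macaulay cases.

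If $M$ is not Cohen--Macaulay, then $\projdim M = p > c$; I combine the BEH bounds $\beta_i(M) \geqslant \binom{c}{i}$ (already established earlier in this section for $c \leqslant 4$) with the Syzygy Theorem~\ref{The SyzygyTheorem}, which contributes $\beta_i \geqslant 2i+1$ for $i \leqslant p-2$, $\beta_{p-1} \geqslant p$, and $\beta_p \geqslant 1$. Summing the entrywise maximum of these two lower bounds shows that for every $c \leqslant 4$ and every $p \geqslant c+1$ the total is already at least $2^c + 2^{c-1}$ (with equality exactly in the boundary case $c=4$, $p=5$). I henceforth assume $M$ is Cohen--Macaulay and apply Proposition~\ref{prop: Betti numbers of cohen macaulay are finite length} to reduce to a finite length module over $k'[x_1,\ldots,x_c]$ with the same Betti numbers.

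In this finite length CM setting $\rank M = 0$, so the Euler identity $\sum_i (-1)^i \beta_i(M) = 0$ pairs up the Betti numbers, writing $\sum \beta_i$ as twice a partial sum. The generalized principal ideal theorem (Proposition~\ref{prop: generalized PIT}) and its dual, obtained by applying PIT to $\Ext^c(M,R)$ via Proposition~\ref{Prop about reverse of Betti sequence}, give $\beta_1 \geqslant \beta_0 + c - 1$ and $\beta_{c-1} \geqslant \beta_c + c - 1$ respectively, furnishing the BEH baseline $\sum \beta_i \geqslant 2^c$. To bridge the remaining $2^{c-1}$ gap using the non-CI hypothesis, I would invoke three structural inputs: (i) the Buchsbaum--Eisenbud structure theorem for codimension-$3$ Gorenstein ideals, which forces $\beta_1$ to be odd, so a cyclic non-CI codim $3$ Gorenstein has $\beta_1 \geqslant 5$; (ii) Kunz's theorem, already cited in the survey, which says no almost complete intersection is Gorenstein, forcing $\beta_c \geqslant 2$ whenever $\beta_1 = c+1$; and (iii) the rigidity in Corollary~\ref{Cor Numerical Generalized PIT}, namely that equality in PIT pins every higher Betti number to the Buchsbaum--Rim formula, which when matched against a prescribed $\beta_0$ and $\beta_c$ often yields a contradiction and upgrades the PIT inequality by $1$. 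Cases $c \leqslant 3$ are then handled by a short combinatorial argument assembling these bounds.

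The main obstacle is the $c=4$ case. In the Gorenstein subcase, self-duality $\beta_i = \beta_{4-i}$ combined with Kunz forces $\beta_1 = \beta_3 \geqslant 6$, so $\sum \beta_i = 4\beta_1 \geqslant 24$. In the cyclic non-Gorenstein subcase, BR-rigidity applied to $\Ext^4(M,R)$ (which has $\beta_0^\ast \geqslant 2$ and $\beta_4^\ast = 1$) rules out the minimal value $\beta_3 = \beta_4 + 3$, upgrading it to $\beta_3 \geqslant \beta_4 + 4 \geqslant 6$; combined with the not-CI bound $\beta_1 \geqslant 5$ and a symmetric improvement on the primal side this yields $\beta_1 + \beta_3 \geqslant 12$ and hence $\sum \beta_i \geqslant 24$. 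The most delicate subcase is a cyclic almost complete intersection of small type, where closing the final gap appears to require linkage theory: writing $I = J + (g)$ for a codim-$4$ complete intersection subideal $J$ and resolving $R/I$ by the mapping cone of the Koszul complexes of $R/J$ and of the linked ideal $R/(J \colon I)$ provides the extra syzygies needed to force $\sum \beta_i \geqslant 24$.
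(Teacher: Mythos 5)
The paper does not prove Theorem~\ref{thm: CEM  c  leq 4}; it cites \cite{CEM} and remarks that the proof ``uses techniques of linkage and relies on the classification \cite{KM} of the possible algebra structures on $\Tor_\bullet^R(R/I,k)$.'' Your sketch is genuinely in that same spirit (reduce to finite length via Proposition~\ref{prop: Betti numbers of cohen macaulay are finite length}, exploit duality via Proposition~\ref{Prop about reverse of Betti sequence}, use Kunz's theorem, the Buchsbaum--Eisenbud codimension-$3$ Gorenstein structure theorem, and Buchsbaum--Rim rigidity from Corollary~\ref{Cor Numerical Generalized PIT}, with linkage for the stubborn case), and several of your observations are correct and nicely in the toolbox of Section~4: the non-Cohen--Macaulay reduction via the Syzygy Theorem closes all four values of $c$ with equality at $(c,p)=(4,5)$; the BR-rigidity trick does upgrade $\beta_3\geqslant\beta_4+3$ to $\beta_3\geqslant\beta_4+4$ when $\beta_0=1$ and $\beta_4\geqslant 2$; the $c\leqslant 3$ and the codimension-$4$ Gorenstein subcases close cleanly.

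There is, however, a genuine gap in the codimension-$4$ cyclic non-Gorenstein case, and it is exactly where you would need CEM's machinery. Using $\sum_i\beta_i=2(\beta_1+\beta_3)$ for a finite-length module with $\beta_0+\beta_2+\beta_4=\beta_1+\beta_3$, the tools you assemble give $\beta_0=1$, $\beta_1\geqslant 5$ (Kunz/non-CI), $\beta_4\geqslant 2$, and $\beta_3\geqslant\beta_4+4\geqslant 6$ (BR rigidity on the dual), so $\beta_1+\beta_3\geqslant 11$, one short. The Betti sequence $\{1,5,8,6,2\}$ passes every bound you write down and sums to $22<24$; to rule it out you must show $\beta_3\geqslant 7$ when $\beta_1=5$, and nothing in your argument delivers this --- the ``symmetric improvement on the primal side'' is not identified and cannot be a further PIT/BR rigidity step since $\beta_1=5>\beta_0+3=4$ is not an equality case. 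Your proposed fix via linkage is also not quite right as stated: the mapping cone of the Koszul complex of $J$ with (the dual of) a resolution of $R/(J\colon I)$ produces a possibly non-minimal resolution and hence \emph{upper} bounds $\beta_i(R/I)\leqslant\binom{4}{i}+\beta_{4-i}(R/K)$, not the lower bound you need; moreover the linked ideal $K=J\colon I$ is not a complete intersection in general, so there is no ``Koszul complex of the linked ideal.'' The missing input is precisely what CEM supply: the Kustin--Miller classification of Tor-algebra structures in codimension $4$, which constrains the pair $(\beta_1,\beta_4)$ and produces the full list of minimal Betti sequences $\{1,5,9,7,2\}$, $\{1,6,10,6,1\}$, $\{2,6,8,6,2\}$, $\{1,6,9,6,2\}$ and their reverses quoted in the survey.
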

In fact, \cite{CEM} actually provides minimal Betti sequences for each codimension.  For example, in codimension $c = 4$ they show that $\{\beta_0, \ldots, \beta_4\}$ must be bigger (entry by entry) than at least one of the following: 
$$\begin{array}{cccc}\{1,5,9,7,2\}, & \{1,6,10,6,1\}, & \{2,6,8,6,2\}, & \{1,6,9,6,2\} \\ 
\{2,7,9,5,1\}, & & &  \{2,6,9,6,1\}\end{array}.$$ 
Note that the entries on the bottom row are the reverse of those directly above. The proof of this result uses techniques of linkage and relies on the classification \cite{KM} of the possible algebra structures on $\Tor_\bullet^R(R/I, k)$.  This result led the authors to ask the following question: 

\begin{question}[Charalambous--Evans--Miller, 1990]\label{Big Question 2c + 2c-1 bound}
If $M$ is finitely generated graded module over $k[x_1, \ldots, x_n]$ of codimension $c$ that is not a complete intersection, is 
$$\sum \beta_i(M) \geqslant (1.5) 2^c = 2^c + 2^{c-1}?$$
\end{question}

We will now discuss several instances where we have an affirmative answer to this question.  We remark, however, that the techniques --- and indeed the underlying reasons --- in each instance are completely different!  Here are some natural follow-up questions.

\begin{question}
What other modules $M$ of codimension $c$ satisfy $\sum \beta_i(M) = (1.5) 2^c$?
\end{question}

\begin{question}
	What are the smallest Betti sequences in a given codimension $c$, when we range over all finitely generated modules of codimension $c$ over a polynomial ring on any number of variables?
\end{question}

\subsection{The Multigraded Case} Let $R = k[x_1, \ldots, x_n]$ and let $M$ be a finitely generated graded-module over $R$.  We say that $M$ is {\bf multigraded} if it remains graded with respect to any grading of the variables.   For example, when $I$ is a monomial ideal, $I$ and $R/I$ are multigraded (each).

\begin{example}\label{Ex:2442 Example}
Let $R = k[x,y,z]$. Consider
$$M = \coker \begin{pmatrix} y & 0  & z \\ -x & z & 0 \\ 0 & -y & -x
\end{pmatrix}.$$
The module $M$ is generated by $3$ elements; for example, since $M$ is a quotient of $R^3$, we can take the images of the canonical basis elements $e_1, e_2, e_3$ for generators of $M$. Then $M$ has relations
$$ye_1 = xe_2, \ \ ze_2 = ye_3, \ \ ze_1 = xe_3.$$
Suppose we are given any weights on the variables. Then the module $M$ will be graded as well by setting $\deg e_1 := \deg(x), \deg e_2 := \deg(y), \deg e_3 := \deg(z)$.    

In contrast, consider
$$N = \coker \begin{pmatrix} x & y&  z & 0 \\ 0 & x & y & z 
\end{pmatrix}, \hspace{4em}
\begin{array}{r|cccc} 
\beta(N) &0 &1&2&3\\ \hline
0 & 2 & 4 & - & - \\
 1 & - & - & 4 & 2 \\
\end{array}.$$
Then $N$ is generated by $e_1, e_2$, and the relations   
$$ye_1 + xe_2= 0, \quad ze_1 +ye_2 = 0$$ 
imply that 
$$\deg(y) + \deg(e_1) = \deg(x) + \deg(e_2) $$
$$\deg(z) + \deg(e_1) = \deg(y) + \deg(e_2) $$
which has no solution for example when $\deg(y) = \deg(x) = 0$ and $\deg (z) = 1$. In other words, $N$ is not multigraded.  Notice that $N$ is finite length as an $R$-module, and thus has codimension $3$.
\end{example}

The following theorem gives strong bounds on the individual Betti numbers of modules that are multigraded and of finite-length.   For instance they imply that a module of codimension $c$ must have Betti numbers that either exceed $\{1, 4, 5, 2\}$ or $\{2, 5, 4, 1\}$.    Noting that the Betti numbers of $N$ in the previous example violate both of these bounds provides yet another reason why it is not multigraded.

\begin{theorem}[Charalambous--Evans, 1991 \cite{CE}]\label{HaraStrongBoundGamma} Let $M$ be a multigraded module {\bf of finite length} and let $\gamma_i(M)$ denote the rank of the $i$th syzygy module of $M$.  Then for all $i$ 
$$\gamma_i(M) \geqslant {n - 1 \choose i-1}\mbox{ and therefore } \beta_i(M) \geqslant {n \choose i}.$$
Further if $M$ is not a complete intersection, then at least one of the following holds:
\begin{enumerate}[(a)]
	\item for all $i$, $\gamma_i(M) \geqslant {n-1 \choose i-1} + {n-2 \choose i-2}$, and therefore $\beta_i(M) \geqslant {n \choose i} + {n-1 \choose i-1}$;
	\item  for all $i$, $\gamma_i(M) \geqslant {n-1 \choose i-1} + {n-2 \choose i-1}$, and therefore $\beta_i(M) \geqslant {n \choose i} + {n-1 \choose i}$.
\end{enumerate}
\end{theorem}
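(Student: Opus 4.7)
My plan is to exploit the multigraded structure to strengthen the Evans--Griffith Syzygy Theorem \ref{The SyzygyTheorem}, which on its own only gives $\gamma_i(M) \geqslant i$. Since $M$ is multigraded and of finite length, each variable $x_j$ acts nilpotently, and the minimal multigraded free resolution has length exactly $n$ and decomposes as $\gamma_i(M) = \sum_\alpha \gamma_{i,\alpha}(M)$ over multidegrees $\alpha \in \mathbb{Z}^n$. The first step of the proof would be to record this multigraded decomposition and to observe that when $M = R/(x_1,\ldots,x_n)$, the Koszul complex achieves equality in every inequality of the theorem, so the target bounds $\binom{n-1}{i-1}$ are the correct ones to aim for.

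For the baseline inequality $\gamma_i(M) \geqslant \binom{n-1}{i-1}$, I would induct on $n$. The base case $n=1$ is immediate, since a finite-length $k[x]$-module is a direct sum of cyclic modules $k[x]/(x^{a_j})$, each with resolution of length $1$. For the inductive step, I would peel off a variable using a short exact sequence built from the multigraded structure: since each $x_j$ is locally nilpotent, multiplication by (a suitable multiplicity of) $x_n$ yields a map whose kernel and cokernel are multigraded modules of finite length over the smaller polynomial ring $k[x_1, \ldots, x_{n-1}]$. Applying the inductive hypothesis to these auxiliary modules and using Remark \ref{rmk:Rank-Nullity} to convert back and forth between $\gamma_i$ and $\beta_i$ should give the desired binomial bound. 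An alternative route is to reduce, via Alexander duality or a polarization-type argument, to the squarefree monomial case where Hochster's formula expresses $\gamma_{i,\alpha}$ in terms of reduced homology of a link in a simplicial complex, and then bound these dimensions directly.

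The heart of the theorem is the refinement for modules that are not complete intersections. My strategy is to analyze the equality case of the baseline inequality: if $\gamma_i(M) = \binom{n-1}{i-1}$ for every $i$, then the multigraded Betti numbers must organize themselves into exactly the Koszul pattern, and combined with the finite length hypothesis and multigradedness, this forces $M$ to be of the form $R/(f_1,\ldots,f_n)$ for a regular sequence of multigraded forms, i.e.\ a complete intersection. Hence, if $M$ is not a complete intersection, there must exist some index $i_0$ where $\gamma_{i_0}(M) \geqslant \binom{n-1}{i_0-1} + 1$. The two dichotomous bounds (a) and (b) should then arise from propagating this single strict inequality in opposite directions along the resolution: either the extra syzygy sits in a position that forces a Koszul-on-$(n-1)$-variables contribution shifted downward (giving the $\binom{n-2}{i-2}$ term) or it is shifted upward (giving the $\binom{n-2}{i-1}$ term), with duality for finite-length modules (which are Cohen--Macaulay, so Proposition \ref{Prop about reverse of Betti sequence} applies) exchanging these two alternatives.

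The main obstacle will be the propagation step in the last paragraph: it is not enough to know that one $\gamma_{i_0}$ is strictly larger; one must produce a \emph{compatible} shifted copy of the Koszul complex worth of extra syzygies across all homological degrees. I expect this will require analyzing the Tor-algebra structure of $\text{Tor}^R_\bullet(M,k)$, in particular showing that an extra generator at some homological degree forces a whole family of additional Tor classes via multiplication by the natural Koszul-type Tor classes $\text{Tor}^R_1(k,k) \cong k^n$. The case split between (a) and (b) then reflects whether that extra Tor-class multiplies nontrivially in the upward or downward direction, and this is where the multigrading does the essential bookkeeping that is unavailable in the general graded setting.
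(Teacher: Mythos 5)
The paper you are reading is a survey: Theorem \ref{HaraStrongBoundGamma} is stated and attributed to Charalambous--Evans \cite{CE} but not proved there, so your proposal can only be checked on its own merits.

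There are two genuine gaps. First, the inductive step for the baseline bound $\gamma_i(M) \geqslant \binom{n-1}{i-1}$ does not go through as sketched. Multiplication by $x_n$ on a finite-length module $M$ yields the four-term exact sequence $0 \to (0 :_M x_n) \to M \xrightarrow{x_n} M \to M/x_nM \to 0$, not a short exact sequence, and the two end terms are $R' = k[x_1,\ldots,x_{n-1}]$-modules while $M$ itself is not. There is no direct change-of-rings relation expressing $\beta_i^R(M)$ in terms of $\beta_i^{R'}(0:_M x_n)$ and $\beta_i^{R'}(M/x_nM)$; the spectral sequence or mapping-cone bookkeeping needed to bridge $R$- and $R'$-resolutions is exactly the content of the lemma you would have to prove, and you have not. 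The Hochster-formula alternative is likewise only a pointer: even for squarefree monomial ideals one must still estimate reduced homology of links of faces, and there is no obvious reason those dimensions sum to $\binom{n-1}{i-1}$ in general.

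Second, the propagation step driving the dichotomy (a)/(b) — acknowledged by you as the main obstacle — is only an intuition. It is true that $\Tor^R_\bullet(M,k)$ is a module over the exterior algebra $\Tor^R_\bullet(k,k) \cong \Lambda(k^n)$, and the multigrading refines this structure, but multiplication by the degree-one generators $e_1,\ldots,e_n$ can and does vanish on individual Tor classes; nothing in what you wrote shows that one extra class at degree $i_0$ must generate an entire shifted Koszul-sized family of classes, nor why the resulting lower bounds are precisely $\binom{n-2}{i-2}$ in one direction and $\binom{n-2}{i-1}$ in the other, uniformly across all $i$ rather than just those near $i_0$. Your observation that duality (Proposition \ref{Prop about reverse of Betti sequence}) exchanges alternatives (a) and (b) is correct and useful, and your equality-case argument (equality of $\gamma_1$ and $\gamma_2$ forces $\beta_0=1$ and $\beta_1=n$, hence a complete intersection by the Principal Ideal Theorem) is sound. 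But those pieces bracket, rather than fill, the two gaps above, which is where the real work of Charalambous--Evans lives.
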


\begin{remark}\label{remark that shows how we violate stronger bounds}
We want to emphasize that without the assumption that $M$ is multigraded \emph{and} of finite length, Theorem \ref{HaraStrongBoundGamma} is false if $c \geqslant 3$.  Indeed, Examples \ref{Ex: the flength ideal with 1551} (respectively \ref{Ex: 1551 Monomial}) give families of modules $R/I$ that are finite length (respectively multigraded) but with Betti numbers that violate the bounds in Theorem \ref{HaraStrongBoundGamma}.  This is essentially due to the fact that $R/I$ is Gorenstein in both cases.  Indeed, since Theorem \ref{HaraStrongBoundGamma} implies that either $\beta_0(M) \geqslant 2$ or $\beta_c(M) \geqslant 2$, any Gorenstein algebra $R/I$ that is not a complete intersection will violate the bounds in Corollary \ref{CorHaraSumBettiMultigraded}.  In fact, we can use this to deduce the following classical fact: if $I$ is a monomial ideal in a polynomial ring $R$ such that $R/I$ is of finite length and Gorenstein, then $R/I$ is a complete intersection.  
\end{remark}

Summing the inequalities for the Betti numbers in Theorem \ref{HaraStrongBoundGamma} yields the following result, which is a special case of Question \ref{Big Question 2c + 2c-1 bound}.  

\begin{corollary}\label{CorHaraSumBettiMultigraded}
If $M$ is a \textbf{multigraded} module {\bf of finite length} then 
$$\sum \beta_i(M) \geqslant 2^n.$$
Further if $M$ is not a complete intersection, then
$$\sum_{i=0}^n \beta_i(M) \geqslant 2^n + 2^{n-1}.$$
\end{corollary}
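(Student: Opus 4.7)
The plan is to deduce this corollary directly from Theorem \ref{HaraStrongBoundGamma} by summing the given inequalities over $i$ and invoking only elementary binomial identities. There is essentially no real obstacle here; all the substantive work is contained in Theorem \ref{HaraStrongBoundGamma}, and this corollary just repackages those bounds in a cleaner form involving powers of $2$.

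For the unconditional bound, Theorem \ref{HaraStrongBoundGamma} gives $\beta_i(M) \geqslant \binom{n}{i}$ for every $i$. Summing over $i = 0, \ldots, n$ and using the identity
\[
\sum_{i=0}^{n} \binom{n}{i} = 2^n
\]
immediately yields $\sum_i \beta_i(M) \geqslant 2^n$.

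For the sharper bound when $M$ is not a complete intersection, Theorem \ref{HaraStrongBoundGamma} tells us that at least one of cases (a) or (b) applies. In case (a), we sum the inequality $\beta_i(M) \geqslant \binom{n}{i} + \binom{n-1}{i-1}$ over $i$; reindexing the second sum via $j = i-1$ shows that $\sum_i \binom{n-1}{i-1} = \sum_j \binom{n-1}{j} = 2^{n-1}$, giving the lower bound $2^n + 2^{n-1}$. In case (b), the inequality $\beta_i(M) \geqslant \binom{n}{i} + \binom{n-1}{i}$ gives exactly the same answer, since $\sum_i \binom{n-1}{i} = 2^{n-1}$ as well. Either way we obtain $\sum_{i=0}^n \beta_i(M) \geqslant 2^n + 2^{n-1}$, completing the argument.
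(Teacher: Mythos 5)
Your proposal is correct and matches the paper's own (one-line) proof: the paper simply remarks that summing the inequalities of Theorem \ref{HaraStrongBoundGamma} yields the corollary, and you have spelled out exactly those sums together with the standard identities $\sum_{i=0}^{n}\binom{n}{i}=2^{n}$ and $\sum_{i}\binom{n-1}{i-1}=\sum_{i}\binom{n-1}{i}=2^{n-1}$. Nothing more is needed.
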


We remark that in this case $n= \codim M$.   

\

Notice that one of the examples in Remark \ref{remark that shows how we violate stronger bounds} has Betti numbers $\{1,5,5,1\}$, and although this violates the bounds in Theorem \ref{HaraStrongBoundGamma}, they nonetheless add up to $2^3 + 2^2$.  Recently, the first author and Seiner were able to show that one can remove the finite length assumption, provided one works with multigraded cyclic modules: 

\begin{theorem}[Boocher--Seiner, 2018 \cite{BS}] 
Let $I\subseteq R =k[x_1, \ldots,x_n]$ be a monomial ideal of any codimension $c \geqslant 2$.  If $R/I$ is not a complete intersection, then
$$\sum_{i=0}^c \beta_i(R/I) \geqslant 2^c + 2^{c-1}.$$
\end{theorem}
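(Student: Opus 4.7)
The plan is to reduce to squarefree monomial ideals via polarization, and then to split on the big height of $I$. First I would polarize $I$ to obtain a squarefree monomial ideal $\widetilde I$ in a larger polynomial ring $\widetilde R$. Polarization preserves codimension and all Betti numbers. Moreover, $R/I$ is a complete intersection if and only if its minimal monomial generators have pairwise disjoint supports, a condition that is both preserved and reflected by polarization. Hence we may assume from the start that $I$ is squarefree.

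With this reduction in place, $I$ is radical, so $I = P_1 \cap \cdots \cap P_s$ with each $P_j$ a minimal prime generated by a subset of the variables. Let $b := \max_j \operatorname{ht}(P_j)$ be the big height of $I$. For any minimal prime $P$ of height $b$, since $I$ is radical we have $I R_P = P R_P$, so $(R/I)_P$ is resolved by the Koszul complex on $b$ variables. The localization inequality then yields $\beta_i(R/I) \geq \binom{b}{i}$ for every $i$. When $b \geq c+1$, summing over $0 \leq i \leq c$ gives
$$\sum_{i=0}^{c} \beta_i(R/I) \;\geq\; \sum_{i=0}^{c} \binom{b}{i} \;\geq\; \sum_{i=0}^{c} \binom{c+1}{i} \;=\; 2^{c+1} - 1 \;\geq\; 2^c + 2^{c-1},$$
settling the non-pure subcase.

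The main obstacle is the pure case $b = c$. Here the localization argument only recovers the Koszul bound $2^c$, so the non-complete-intersection hypothesis must be exploited more delicately. Since $R/I$ is multigraded, the natural plan is to construct a multigraded finite-length module $M$ of codimension $c$, extracted from $R/I$ by a monomial-preserving specialization, with $\beta_i(M) \leq \beta_i(R/I)$ and still not a complete intersection, so that Corollary~\ref{CorHaraSumBettiMultigraded} applied to $M$ gives $\sum \beta_i(M) \geq 2^c + 2^{c-1}$. Producing such an $M$ is the hard step: naive choices such as $I + (x_1^2, \ldots, x_n^2)$ do make the quotient Artinian and multigraded, but need not preserve either the inequality on Betti numbers or the non-complete-intersection property. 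One likely has to proceed variable by variable, leveraging the combinatorial fact that purity together with non-complete-intersection-ness forces two minimal primes $P_i$, $P_j$ to share at least one variable in their supports, and using this overlap to control both the Betti numbers and the non-complete-intersection property at each step.

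A plausible alternative is a direct combinatorial attack via Hochster's formula, expressing $\sum_i \beta_i(R/I)$ as a sum of dimensions of reduced cohomologies $\tilde H^{j}(\Delta|_\sigma; k)$ of restrictions of the Stanley--Reisner complex $\Delta$ of $I$, and arguing that the forced support-overlap produces at least $2^{c-1}$ worth of additional cohomology beyond the purely Koszul total $2^c$. Either route looks feasible but combinatorially intricate, and I expect the construction of the auxiliary $M$ (or the equivalent combinatorial bookkeeping on $\Delta$) in the pure case to be the main difficulty.
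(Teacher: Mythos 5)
The reduction to squarefree ideals via polarization is sound, and your argument for the case where the big height $b$ exceeds $c$ is complete and correct: localizing at a minimal prime of maximal height gives $\beta_i(R/I) \geqslant {b \choose i}$ for $0 \leqslant i \leqslant c$, and $\sum_{i=0}^{c}{b \choose i} \geqslant 2^{c+1}-1 \geqslant 2^c + 2^{c-1}$ for $c \geqslant 2$.

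However, the unmixed case $b = c$, which you correctly identify as the crux, is left unproved, and the sketch you offer to fill it has a genuine flaw. The claimed combinatorial fact --- that for an unmixed non-CI squarefree monomial ideal some two minimal primes must share a variable --- is false: the ideal $I = (x_1, \ldots, x_c) \cap (y_1, \ldots, y_c)$ in $k[x_1, \ldots, x_c, y_1, \ldots, y_c]$ is squarefree and unmixed of codimension $c$, has $c^2 > c$ minimal generators and so is not a complete intersection, yet its two minimal primes have disjoint supports. Your intended reduction to a finite-length multigraded module $M$ of codimension $c$ with $\beta_i(M) \leqslant \beta_i(R/I)$ is also structurally problematic: localization at a minimal prime only produces the residue field, which \emph{is} a complete intersection, so Corollary~\ref{CorHaraSumBettiMultigraded} yields nothing there, while monomial specialization generally \emph{increases} Betti numbers rather than decreasing them; it is not clear any such $M$ exists in general. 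Note too that the survey explicitly warns that this theorem ``does not apparently follow from a bound on the individual Betti numbers,'' which is precisely the route you pursue via the Charalambous--Evans multigraded bound. The argument in \cite{BS}, as summarized in this survey, is a degeneration that reduces every non-CI monomial ideal to either a codimension-$2$ ideal with Betti sequence $\{1,3,2\}$ or a codimension-$3$ ideal with Betti sequence $\{1,5,5,1\}$ --- a genuinely different mechanism that bypasses both the big-height dichotomy and individual Betti number bounds.
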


Unlike the proofs in the finite length case, this theorem does not apparently follow from a bound on the individual Betti numbers.  Indeed, the argument follows via a degeneration argument that reduces everything to either a Betti sequence $\{1,3,2\}$ with $c=2$ or a Betti sequence $\{1,5,5,1\}$ with $c=3$.  Perhaps it is a coincidence that these Betti numbers sum to $(1.5)2^c$.

\begin{question}
	Examples \ref{Ex: the flength ideal with 1551} and \ref{Ex: 1551 Monomial} are both examples of Gorenstein algebras where the sum of the Betti numbers is equal to $2^c + 2^{c-1}$.  What other Gorenstein algebras $R/I$ of codimension $c$ have this sum?
\end{question}

\begin{question} In Examples \ref{Ex:132 and cones} and \ref{Ex: the flength ideal with 1551}, we saw two distinct families of Betti numbers whose Betti numbers sum to $2^c + 2^{c-1}$.  Are there other examples of Betti numbers that achieve this sum?
\end{question}

\begin{question}\label{QuestionForMultigradedOpen} If $M$ is a {\bf multigraded} $k[x_1, \ldots, x_n]$-module of codimension $c <n$ that is not a complete intersection, then does
$$\sum \beta_i(M) \geqslant 2^c + 2^{c-1}$$
hold?
\end{question}
As a partial answer to this, we have the following:

\begin{theorem}[Brun--Romer, 2004 \cite{BR}]\label{BrunRomer}
If $M$ is multigraded $k[x_1, \ldots, x_n]$-module of projective dimension $p$, then $\beta_i(M) \geqslant {p \choose i}.$ 
\end{theorem}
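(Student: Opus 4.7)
My plan is to exploit the fact that multigraded modules have very restricted associated primes (all monomial) together with the Koszul complex to produce explicit nonzero classes in each $\Tor^R_i(M,k)$.

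First, I would establish the structural fact that for a multigraded module $M$, the projective dimension equals the maximum size $p$ of a subset $S \subseteq [n]$ with $P_S = (x_i : i \in S) \in \Ass(M)$. The inequality $\projdim M \geqslant p$ is standard: localizing at an associated prime $P = P_S$ of height $|S|$ gives a module $M_P$ over $R_P$ with $\depth M_P = 0$, so $\projdim_{R_P} M_P = |S|$ by Auslander--Buchsbaum, and Betti numbers only decrease under localization. The reverse inequality uses the bound $\depth M \leqslant \dim R/P$ for every $P \in \Ass M$, combined with Auslander--Buchsbaum. Pick an associated prime $P = P_S$ realizing $|S| = p$; the goal is to produce $\binom{p}{i}$ independent Tor classes supported in multidegrees indexed by subsets of $S$.

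Second, since $P_S \in \Ass M$, there exists a homogeneous $m \in M_a$ for some multidegree $a$ with $\Ann(m) = P_S$, i.e. $x_i m = 0$ for all $i \in S$. In the Koszul complex $K_\bullet = M \otimes_k \Lambda^\bullet k^n$ computing $\Tor^R_\bullet(M,k)$, for each subset $T \subseteq S$ consider $m \otimes e_T$, where $e_T = \bigwedge_{i \in T} e_i$. Its Koszul differential is $\sum_{i \in T} \pm (x_i m) \otimes e_{T \setminus \{i\}} = 0$, so this is a cycle in homological degree $|T|$ and multidegree $a + \sum_{i \in T} e_i$. Since multidegrees of the form $a + \sum_{i \in T} e_i$ for $T \subseteq S$ are pairwise distinct, any dependence among the resulting Tor classes must occur within a single multidegree, so it suffices to check that each class is nonzero.

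Third, to verify non-triviality, I would choose $m$ carefully. Consider the submodule $N = \{x \in M : P_S \cdot x = 0\}$, a multigraded module over the quotient ring $R/P_S \cong k[x_i : i \notin S]$. Choose $m \in N_a$ whose image is a minimal generator of $N$ as an $R/P_S$-module, and further, such that $m \notin \fm M$ (possible by picking from a minimal generating set of $N$ suitably). With this choice, a hypothetical Koszul boundary producing $m \otimes e_T$ would require an element in multidegree $a + \sum_{i \in T} e_i + e_j$ for some $j$, whose component in the direction of $e_{T \cup \{j\}}$ would have to multiply by $x_j$ back to $m$---a multidegree chase combined with minimality shows this forces the putative boundary to vanish in the relevant multigraded component. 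This yields $\binom{p}{i}$ independent classes in $\Tor_i^R(M,k)$, giving $\beta_i(M) \geqslant \binom{p}{i}$.

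The hardest step will be the third one: verifying that the Koszul cycles $m \otimes e_T$ represent nonzero Tor classes requires a delicate multidegree bookkeeping and a wise choice of socle representative $m$. The multigraded hypothesis is crucial throughout---it guarantees monomial associated primes, provides a socle element with prescribed monomial annihilator, and prevents boundaries from mixing multidegrees. Without multigradedness, neither the existence of a good $m$ nor the rigidity of multidegree components would hold, which is precisely why the analogous bound $\beta_i(M) \geqslant \binom{p}{i}$ cannot be expected in general, as illustrated by the three-generated ideals of large projective dimension from Theorem \ref{Bruns3Generated}.
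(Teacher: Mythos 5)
The paper states this theorem without proof, simply citing Brun--R\"omer \cite{BR}, so I am evaluating your proposal on its own merits rather than against an in-paper argument. Your Step 1 contains a fatal error: you claim that for a multigraded module $M$ the projective dimension equals $p' := \max\{\Ht P : P \in \Ass M\}$. Only the inequality $\projdim M \geqslant p'$ holds. The argument you offer for the ``reverse inequality'' --- that $\depth M \leqslant \dim R/P$ for $P \in \Ass M$, plus Auslander--Buchsbaum --- gives $\depth M \leqslant n - p'$ and hence $\projdim M = n - \depth M \geqslant p'$, which is the \emph{same} direction, not its reverse. And equality is genuinely false: take $R = k[x_1,x_2,x_3,x_4]$ and $I = (x_1,x_2)\cap(x_3,x_4) = (x_1x_3,\,x_1x_4,\,x_2x_3,\,x_2x_4)$. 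Here $\Ass(R/I)=\{(x_1,x_2),(x_3,x_4)\}$, so $p' = 2$, but the associated Stanley--Reisner complex (two disjoint edges) is disconnected, so $\depth(R/I)=1$ and $\projdim(R/I)=3$.

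This breaks the entire strategy, not just the preliminary lemma. Your Steps 2--3 manufacture Koszul cycles $m\otimes e_T$ from a socle element $m$ with $\Ann(m)=P_S$ for $P_S\in\Ass M$; in the example above the largest such $S$ has size $2$, so you can produce at most ${2\choose i}$ independent Tor classes, whereas the theorem demands $\beta_i(R/I)\geqslant{3\choose i}$ (the actual Betti numbers are $1,4,4,1$). So even granting the delicate nonvanishing in Step 3 (which as written is only sketched), the method proves at best $\beta_i(M)\geqslant {\max_{P\in\Ass M}\Ht P \choose i}$, a strictly weaker statement. A correct proof must handle multigraded modules whose depth falls below $\min_{P\in\Ass M}\dim R/P$; Brun--R\"omer do this by working with prime filtrations of $\mathbb{Z}^n$-graded modules and an inductive argument over the whole filtration, rather than extracting classes from a single associated prime.
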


Since $p\geqslant c$ with equality only in the case that $M$ is Cohen-Macaulay, we see that Question \ref{QuestionForMultigradedOpen} can be reduced to the Cohen-Macaulay case. 

Finally, we cannot resist including the following beautiful result of Charalambous and Evans, which gives a sharp strong bound for monomial ideals of finite colength: 

\begin{theorem}[Charalambous--Evans, 1991 \cite{CE}]
Let $R = k[x_1, \ldots, x_n]$ and $M = R/I$, where the ideal $I$ is minimally generated by $n$ pure powers of the variables and one additional generator $m = x_1^{a_1}\cdots x_n^{a_n}$.  Suppose that $\ell$ is the number of nonzero $a_i$'s.  Then for all $i$, we have
$$\beta_i(M) = {n \choose i} + {n-1 \choose i -1 } + \cdots + {n - (\ell -1 ) \choose i - (\ell - 1)}.$$
\end{theorem}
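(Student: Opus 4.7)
My plan is to produce the minimal free resolution of $R/I$ via a mapping cone applied to a carefully chosen short exact sequence. Let $J = (x_1^{b_1}, \ldots, x_n^{b_n})$, so that $I = J + (m)$, and set $d = \deg m$. After reindexing we may assume $\mathrm{supp}(m) := \{i : a_i > 0\} = \{1, \ldots, \ell\}$; minimality of the $n+1$ generators forces $a_i < b_i$ whenever $a_i > 0$ and in particular $\ell \geqslant 2$. A direct monomial computation gives
\[(J : m) \;=\; (x_1^{b_1-a_1}, \ldots, x_\ell^{b_\ell-a_\ell}, x_{\ell+1}^{b_{\ell+1}}, \ldots, x_n^{b_n}),\]
and multiplication by $m$ is injective on $R/(J:m)$, yielding the short exact sequence
\[0 \longrightarrow R/(J:m)(-d) \xrightarrow{\,\cdot m\,} R/J \longrightarrow R/I \longrightarrow 0.\]
Both $R/J$ and $R/(J:m)$ are complete intersections of codimension $n$, each resolved by a Koszul complex, which I denote $A_\bullet := K(J:m)(-d)$ and $B_\bullet := K(J)$.

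The next step is to write down an explicit chain map $\phi\colon A_\bullet \to B_\bullet$ lifting $\cdot m$. Since each Koszul complex is a tensor product of length-one complexes, it is enough to lift multiplication by $x_i^{a_i}$ on each factor $R \xrightarrow{x_i^{b_i-a_i}} R$; the obvious lift uses $\cdot x_i^{a_i}$ in homological degree zero and $\cdot 1$ in homological degree one. Tensoring over $i$, with standard exterior-algebra bases $\{f_\sigma\}$ of $A_k$ and $\{e_\sigma\}$ of $B_k$ indexed by $k$-subsets $\sigma \subseteq \{1,\ldots,n\}$, the map $\phi$ is therefore \emph{diagonal}, with
\[\phi_k(f_\sigma) \;=\; \Bigl(\prod_{i \notin \sigma} x_i^{a_i}\Bigr) e_\sigma \;=\; \frac{m}{\prod_{i \in \sigma} x_i^{a_i}}\, e_\sigma.\]
The mapping cone $C_\bullet$ of $\phi$ then resolves $R/I$, and $\mathrm{rank}\, C_k = \binom{n}{k-1} + \binom{n}{k} = \binom{n+1}{k}$.

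The crux is to identify and cancel the redundant summands of $C_\bullet$, and this is where the main obstacle lies. The diagonal entry above is a unit exactly when $a_i = 0$ for every $i \notin \sigma$, equivalently when $\sigma \supseteq \mathrm{supp}(m)$; this gives exactly $\binom{n-\ell}{k-\ell}$ unit entries in $\phi_k$, and each such unit permits a cancellation that removes $f_\sigma$ from $A_k \subset C_{k+1}$ and $e_\sigma$ from $B_k \subset C_k$. The bookkeeping step is verifying that all these cancellations can be performed simultaneously without ever producing new units. Because $\phi$ is diagonal, the columns of $d_{C}^{(k+1)}$ indexed by different $\sigma$'s interact only through the Koszul differentials $d_A$ and $d_B$, and the Schur-complement effect of one cancellation alters only entries between the $A_{k-1}$-block of $C_k$ and the $B_{k+1}$-block of $C_{k+2}$, producing a new monomial of the form $\pm x_j^{b_j-a_j} x_i^{b_i}$ of positive degree. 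Every surviving original entry (Koszul or non-unit $\phi$-entry) and every newly created entry lies in the maximal ideal, so the pruned complex is minimal.

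Finally, summing the removals in two consecutive homological degrees gives
\[\beta_k(R/I) \;=\; \binom{n}{k-1} + \binom{n}{k} - \binom{n-\ell}{k-1-\ell} - \binom{n-\ell}{k-\ell} \;=\; \binom{n+1}{k} - \binom{n-\ell+1}{k-\ell},\]
after applying Pascal's identity to each pair. A short induction on $\ell$, iteratively applying $\binom{N+1}{K} = \binom{N}{K} + \binom{N}{K-1}$ to the subtracted term, rewrites the right-hand side as $\sum_{j=0}^{\ell-1}\binom{n-j}{k-j}$, which is precisely the claimed formula. Once the diagonal form of $\phi$ is in hand, the combinatorics is transparent; the minimality verification in the previous paragraph is really the only non-formal step.
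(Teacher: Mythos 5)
The paper states this result without proof, attributing it to \cite{CE}, so there is no internal argument for me to compare against; your derivation has to stand on its own, and it does. The short exact sequence built from the colon ideal $(J:m)$ --- itself a complete intersection because each $a_i < b_i$ --- the diagonal chain map $\phi$ between the two Koszul complexes, and the mapping cone with its ${n-\ell \choose k-\ell}$ unit entries in $\phi_k$ give the formula cleanly, and your rank accounting and the Pascal telescope at the end are correct. Two remarks. First, a small typo: the Schur-complement modification after canceling a unit in $d^{(k+1)}_C\colon C_{k+1}\to C_k$ sits in the block from the $B_{k+1}$-summand of $C_{k+1}$ (not $C_{k+2}$) to the $A_{k-1}$-summand of $C_k$; the substance of the claim --- new entries are products $\pm x_j^{b_j-a_j}x_i^{b_i}$ of positive degree, and the diagonal positions of $\phi$ are never touched --- is right. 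Second, you can skip the iterated-cancellation bookkeeping entirely: since the Koszul differentials $d_A$ and $d_B$ vanish modulo the maximal ideal, the complex $C_\bullet\otimes_R k$ has differential whose only nonzero block is $\overline\phi$, the $0$--$1$ diagonal matrix recording which entries of $\phi$ are units; therefore
$$\beta_k(R/I) = \dim_k H_k(C_\bullet \otimes_R k) = \Bigl({n\choose k}-\rank\overline\phi_k\Bigr) + \Bigl({n\choose k-1}-\rank\overline\phi_{k-1}\Bigr),$$
and $\rank\overline\phi_k = {n-\ell \choose k-\ell}$, which reproduces your count with no separate minimality verification needed.
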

For instance, this says that the Betti numbers of the ideal $I = (x^2, y^2, z^2, w^2, xywz)$ must sum to at least $2^4 + 2^3 + 2^2 + 2 = 30$.  Indeed, the Betti numbers are $\{1,5,10, 10, 4\}$. 

\begin{question}
Can this theorem be extended outside of the case of finite colength monomial ideals?  Is there a version for general monomial ideals?  Is there a version for multigraded modules?  For general ideals? 
\end{question}

\subsection{Low Regularity Case}
We finish this survey with some of the most recent results on larger lower bounds for Betti numbers.  So far we have not paid much attention to the degrees of the syzygies.  After all, our bounds are in terms of the Betti numbers $\beta_i$, which count the number of generators, but not their degrees, of the $i$th syzygy module.  But since we are working with graded modules, we will now actually look at $\beta_{ij}$. 

In terms of degrees, the simplest resolutions are those whose matrices all have linear entries.  Such resolutions are called linear.  

\begin{theorem}[Herzog--K\"uhl, 1984 \cite{HK}]\label{Theorem HerzogKuhlLinear} 
If $M$ is a graded $R$-module of projective dimension $p$ with a linear resolution, then $\beta_i(M) \geqslant {p \choose i}$. 
\end{theorem}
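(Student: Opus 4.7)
My plan is to exploit the single-row shape of the Betti table of a linear resolution, using the Hilbert series to nail down the Cohen--Macaulay case and Boij--S\"oderberg decomposition to handle the general case. First I would set notation: linearity means $\beta_{i,j}(M) = 0$ unless $j = d + i$, where $d$ is the common degree of the minimal generators of $M$. In particular, the Betti table of $M$ has exactly one nonzero row, and the alternating Poincar\'e polynomial
\[
Q(t) \;:=\; \sum_{i=0}^{p} (-1)^i \beta_i(M)\, t^i
\]
has degree exactly $p$ since $\beta_p(M) > 0$.

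Next I would dispatch the Cohen--Macaulay case directly via the Hilbert series. From the resolution,
\[
HS(M) \;=\; \frac{t^{d}\,Q(t)}{(1-t)^{n}},
\]
so when $\codim M = p$ the series must reduce to a rational function with denominator $(1-t)^{n-p}$; this forces $(1-t)^{p} \mid Q(t)$. Since $\deg Q = p$ and $Q(0) = \beta_{0}(M) \geqslant 1$, one concludes $Q(t) = \beta_{0}(M)(1-t)^{p}$, so $\beta_{i}(M) = \beta_{0}(M)\binom{p}{i} \geqslant \binom{p}{i}$. This is essentially the classical Herzog--K\"uhl computation for pure resolutions.

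For the general, possibly non-CM, case I would invoke Boij--S\"oderberg theory: the Betti table of $M$ decomposes as a positive rational combination of pure diagrams. Because our table is concentrated in a single row, the pure diagrams that appear must also be supported in that row, and so their degree sequences have the form $\sigma_{s} = (d, d+1, \ldots, d+s)$ for $0 \leqslant s \leqslant p$. Applying the CM calculation above to each such diagram shows that the Betti numbers of $\pi(\sigma_{s})$ are proportional to $\binom{s}{i}$. Thus we obtain a representation
\[
\beta_{i}(M) \;=\; \sum_{s=0}^{p} c_{s} \binom{s}{i}, \qquad c_{s} \in \mathbb{Q}_{\geqslant 0}.
\]
Reading off the top spot yields $\beta_{p}(M) = c_{p}$, and since $\projdim M = p$ forces $\beta_{p}(M)$ to be a positive integer, we get $c_{p} \geqslant 1$. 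Therefore $\beta_{i}(M) \geqslant c_{p}\binom{p}{i} \geqslant \binom{p}{i}$.

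The step I expect to be the main obstacle is precisely the non-CM case. Purely Hilbert-series manipulations give only $(1-t)^{c} \mid Q(t)$ with $c \leqslant p$, which leaves too much freedom in the individual Betti numbers when $c < p$; for instance, the reduced numerator $P(t) = Q(t)/(1-t)^c$ need not be a constant. Boij--S\"oderberg supplies exactly the missing input by expressing the Betti table inside the non-negative cone spanned by pure diagrams of length at most $p$, and in the single-row situation the extremal rays are pinned down by the CM Herzog--K\"uhl equations. An alternative, more classical route would handle the CM case via the Hilbert-series computation above and then attempt to reduce the general case to it by splitting off free summands or by a mapping-cone argument; whether such a route reaches the full inequality without the convex-geometric input of Boij--S\"oderberg is the delicate point.
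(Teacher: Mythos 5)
The paper states this result as a citation to Herzog--K\"uhl \cite{HK} without reproducing a proof, so there is no in-paper argument to compare against; I will assess your proposal on its own. Your argument is correct. The Cohen--Macaulay step is the standard computation: with $\codim M = p$ one forces $(1-t)^{p}\mid Q(t)$, and since $\deg Q = p$ with $Q(0)=\beta_0(M)$ this pins down $Q(t)=\beta_0(M)(1-t)^{p}$, hence $\beta_i(M)=\beta_0(M)\binom{p}{i}$. The non-CM step via Boij--S\"oderberg is the delicate one, and it also holds up: because the decomposition has nonnegative coefficients and every pure diagram has all entries $\beta_{i,d_i}$ strictly positive, any pure diagram contributing to a single-row Betti table must itself be supported in that row, forcing its degree sequence to be $(d,d+1,\ldots,d+s)$ for some $s\leqslant p$; the CM computation identifies its Betti numbers as a positive multiple of $\binom{s}{i}$; reading off $i=p$ isolates the $s=p$ term, and $\beta_p(M)\geqslant 1$ (from $\projdim M = p$) gives the inequality --- in fact the slightly stronger $\beta_i(M)\geqslant \beta_p(M)\binom{p}{i}$. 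The main caveat is historical rather than logical: Boij--S\"oderberg theory postdates the 1984 theorem by roughly a quarter century, so what you have is a valid modern proof rather than a reconstruction of Herzog and K\"uhl's original argument, which did not have that machinery available. For a reader today this is arguably the cleanest route, and it has the side benefit of producing the $\beta_p(M)$ factor for free.
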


\begin{remark}
Notice that this is the same bound given by Brun and R\"omer for multigraded modules in Theorem \ref{BrunRomer}. In the same paper, Herzog and K\"uhl show that apart from this bound, linear resolutions can behave quite wildly.\footnote{This is the term used by Herzog and K\"uhl.} Indeed, they show how to produce squarefree monomial ideals with a linear resolution such that the Betti numbers form a non-unimodal sequence with arbitrarily many extrema.
\end{remark}

Linear resolutions have the property that each matrix has entries all of which are linear.  This is a particular case of what is called a pure resolution.  We say that a module $M$ is pure if it is Cohen-Macaulay and each map has entries all of the same degree.  Equivalently, in the free resolution $ F_\bullet\rightarrow M$, each $F_i$ is generated in a single degree $d_i$.  This sequence of numbers $\{d_0, \ldots, d_c\}$ is called the degree sequence of $M$.   

\begin{example}\label{Ex:Two examples of pure modules, 2442 and 1551}
The module $M$ given in Example \ref{Ex:2442 Example} with Betti table
$$\begin{array}{r|cccc} 
\beta(M) &0 &1&2&3\\ \hline
0 & 2 & 4 & - & - \\
 1 & - & - & 4 & 2 \\
\end{array}$$
is pure with degree sequence $0,1,3,4$.  The module $R/G$ in Example \ref{Ex: the flength ideal with 1551}  is an example of a pure module with degree sequences $\{0,2,3,5\}$ and Betti table
$$\begin{array}{r|cccc} 
\beta(M) &0 &1&2&3\\ \hline
0 & 1 & - & - & - \\
 1 & - & 5 & 5 & - \\
  2 & - & - & - & 1 \\
\end{array}.$$
\end{example}

In \cite{HK}, Herzog and K\"uhl showed that if $M$ is a pure module with degree sequence $\{d_0, \ldots, d_c\}$, then for all $i\geqslant 1$ we have
$$\beta_i(M) = \beta_0(M) \prod_{\stackrel{1\leqslant j \leqslant c}{j \neq i}}\frac{|d_j-d_0|}{|d_i-d_j|}.$$

Quite surprisingly, given any degree sequence $d_0< d_1 < \cdots < d_p$, there exists a Cohen-Macaulay module $M$ whose resolution is pure with this degree sequence. This was proven in \cite{ES,EFW} as part of the resolution of the Boij-S\"oderberg conjectures.

\begin{question}\label{question for pure} If $M$ is a pure module of codimension $c$, is $\beta_i(M) \geqslant { c \choose i}$?
\end{question}

Given the Herzog-K\"uhl equations, one might expect that this question is numerical in nature, and in a sense it is. However, the following example shows a major obstacle: 

\begin{example}\label{Ex:Pure modules with small Betti numbers two examples 32 and 44}
Let $M$ be a pure module with degree sequence $\{0,2,3,7,8,10\}$.  Such a module has codimension $5$ and its Betti table is
$$\begin{array}{r|cccccc} 
\beta(M) &0 &1&2&3& 4 & 5\\ \hline
0 & \beta_0(M) & - & - & - & - & - \\
 1 & - & 7\beta_0(M) & 8\beta_0(M) & - & - & - \\
  2 & - & - & - & -& - & -  \\
  3& - & - & - & - & - & -  \\
  4& - & - & - & 8\beta_0(M) & 7\beta_0(M) & -  \\
  5& - & - & - & - & - & \beta_0(M)  \\
\end{array}.$$
Notice that if $\beta_0(M) = 1$, then this would give an example of a module with $\beta_2(M) < { 5\choose 2}$.  So part of answering Question \ref{question for pure} involves showing that $\beta_0(M) \geqslant 2$.  One way to prove this is to apply a big hammer --- the Total Rank Conjecture, now Walker's Theorem \cite{W}. Using Walker's Theorem, we notice that if $\beta_0(M) = 1$, then the sum of the Betti numbers would be equal to $2^c$, but evidently $M$ is not a complete intersection, which contradicts Walker's result.  Alternatively, one could note that from the Betti table, the rank of $\Omega_3(M)$ would be $2\beta_0(M)$, which would violate the Syzygy Theorem \ref{The SyzygyTheorem} when $\beta_0(M) = 1$. 

Extending this sort of argument to general degree sequences will present many challenges. In fact, we need only to turn to the degree sequence $\{0,1,2,3,5,7,8,9,10\}$ to see to limits of this argument.  A module $M$ possessing a pure resolution with this degree sequence would necessarily be of codimension $8$ and would have Betti table 
$$\begin{array}{r|ccccccccc} 
\beta(M) &0 &1&2&3& 4 & 5& 6 & 7 & 8\\ \hline
 0  &  4N &  25N & 60N & 60N & -  & - & - & - & - \\
 1   & -& -& -& -& \textcolor{red}{42N}& -& -& - & - \\
 2 & -& -& -& -& -& 60N& 60N& 25N& 4N \\
\end{array}$$
for some positive integer $N$.  Boij-S\"oderberg Theory guarantees that such a module exists, but $N$ may be large.   Note that if $N = 1$ then  $\beta_4 < {8 \choose 4}$, and the sum of the Betti numbers would be $340 < 2^8 + 2^7$ which would violate both the BEH Conjecture \ref{BEH Conjecture} and provide a negative answer to Question \ref{Big Question 2c + 2c-1 bound}.  Notice that the Betti sequence is non-unimodal, regardless of $N$. 

\end{example}

The numerical behavior resulting from the Herzog-K\"uhl equations is nontrivial to analyze, but is slightly manageable in the case where the last degree $d_c$ is small relative to $d_1$.  Note that $d_1$ and $d_c$ are esssentially degrees of the first syzygies of $M$ and the Castelnuovo-Mumford Regularity.  This insight was first noticed by Erman in \cite{Erman}.  Coupling this observation with the full force of the newly proven Boij-S\"oderberg Theory allowed him to prove the BEH Conjecture for those graded modules whose regularity is low relative to the degrees of the first syzygies. 

\begin{theorem}[Erman, 2010 \cite{Erman}]
 Let $M$ be a graded $R$-module of codimension $c\geqslant 3$ generated in
  degree $0$ and let $a\geqslant 2$ be the minimal degree of a first syzygy
  of $M$.  If $\reg(M) \leqslant 2a-2$, then
$$\beta_i(M) \geqslant \beta_0(M){ c \choose i}.$$ 
In particular the sum of the Betti numbers is at least $\beta_0(M) 2^c$.
\end{theorem}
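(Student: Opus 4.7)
The plan is to invoke Boij--S\"oderberg theory to express the Betti table of $M$ as a positive rational combination of pure diagrams, and then reduce the desired bound to a numerical inequality on the Herzog--K\"uhl ratios that can be verified pure-summand by pure-summand.

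By the Eisenbud--Schreyer theorem, we may write
$$\beta(M) = \sum_\lambda q_\lambda \, \pi(d^{(\lambda)}),$$
where $q_\lambda \in \mathbb{Q}_{>0}$ and each $\pi(d^{(\lambda)})$ is the pure diagram associated to a strictly increasing integer sequence $d^{(\lambda)} = (0 = d_0 < d_1 < \cdots < d_p)$ (suppressing the superscript $\lambda$). The hypotheses on $M$ restrict which degree sequences can appear: $d_0 = 0$ because $M$ is generated in degree $0$; $d_1 \geqslant a$ because $a$ is the minimal first-syzygy degree; $p \geqslant c$ because $\codim M = c$; and---crucially---$d_p - p \leqslant \reg M \leqslant 2a - 2$, so the top degree is forced into the narrow window $[\,a+p-1,\; 2a+p-2\,]$.

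It then suffices to establish, for every such admissible sequence $d$, the pointwise inequality $\pi(d)_i \geqslant {c \choose i}\, \pi(d)_0$. In the standard normalization $\pi(d)_i = \prod_{j \neq i} |d_j - d_i|^{-1}$, this is equivalent to
$$\prod_{\substack{1 \leqslant j \leqslant p \\ j \neq i}} \frac{d_j}{|d_j - d_i|} \;\geqslant\; {c \choose i}.$$
A direct differentiation shows that each factor $d_j/|d_j - d_i|$ is monotone in $d_j$ on each side of $d_i$: it \emph{increases} with $d_j$ when $j < i$ and \emph{decreases} with $d_j$ when $j > i$. One can therefore identify the minimizing configuration by pushing each $d_j$ with $j < i$ down to the bottom of its allowed range and each $d_j$ with $j > i$ up to the top. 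The window bound $d_p \leqslant 2a - 2 + p$ controls how far the upper $d_j$'s can travel, and this is exactly what makes the resulting minimum at least ${c \choose i}$.

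The main obstacle is this combinatorial/numerical step: the monotonicity argument must be executed carefully, since perturbing one $d_j$ interacts via the strict-increase condition with its neighbors, and then the resulting extremal product must be bounded below by ${c \choose i}$ using the constraint $d_p - d_1 \leqslant a + p - 2$. Once the pure-diagram inequality is in hand, summing with the Boij--S\"oderberg weights yields
$$\beta_i(M) \;=\; \sum_\lambda q_\lambda\, \pi(d^{(\lambda)})_i \;\geqslant\; {c \choose i} \sum_\lambda q_\lambda\, \pi(d^{(\lambda)})_0 \;=\; {c \choose i}\, \beta_0(M),$$
and the total-Betti-number bound $\sum_i \beta_i(M) \geqslant \beta_0(M)\, 2^c$ follows immediately by summing over $i$ and using $\sum_{i=0}^c {c \choose i} = 2^c$.
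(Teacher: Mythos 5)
Your overall strategy is exactly what the paper attributes to Erman: decompose the Betti table via Boij--S\"oderberg theory into a positive rational combination of pure diagrams, observe that the hypotheses on $M$ force every degree sequence in the decomposition into a narrow window ($d_0 = 0$, $d_1 \geqslant a$, $d_p - p \leqslant \reg M \leqslant 2a - 2$), and then reduce the theorem to the pointwise inequality $\pi(d)_i \geqslant \binom{c}{i}\pi(d)_0$ on each pure summand. That is the right framework, and your reformulation of the pointwise bound as $\prod_{1 \leqslant j \leqslant p,\, j\neq i} d_j / |d_j - d_i| \geqslant \binom{c}{i}$ is correct given the normalization.

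The genuine gap is that the key numerical inequality is only sketched, and you flag it yourself as ``the main obstacle.'' Your monotonicity observation is sound as far as it goes, but ``push each $d_j$ to an extreme'' does not by itself produce a closed-form minimum you can compare with $\binom{c}{i}$: the extremal positions of $d_{i+1}, \ldots, d_p$ are coupled through the strict-increase condition and the diagonal bound $d_j \leqslant j + \reg M$, the position of $d_i$ itself must be optimized (it trades off against the factors on each side), and the factors $d_j/(d_i - d_j)$ with $j < i$ can be strictly less than $1$, so having many factors is not automatically enough. Moreover, when $M$ is not Cohen--Macaulay the summands can have $p > c$, and one must check that the extra factors do not hurt. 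This is precisely the content the proof must actually supply; the paper describes Erman as supplying it via a degeneration argument on pure diagrams rather than by the direct calculus-style optimization you sketch, and that final step is where the real work lies. Until that inequality is proved for all admissible degree sequences (including the non--Cohen--Macaulay $p > c$ case), the argument is incomplete.
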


To put the regularity bound into perspective, if $M$ is $R/I$ for some ideal $I$ generated by quadrics, then the above theorem would apply to any $M$ with regularity at most $2$, which means the Betti table has at most 2 rows.  The regularity condition is relaxed enough to include, for example, the coordinate rings of smooth curves embedded by linear systems of high degree, those of toric surfaces, as well as any finite length module whose socle degree is relatively low.  In Example \ref{Ex:Pure modules with small Betti numbers two examples 32 and 44} the two Betti tables do not obey the low regularity bound.  In the first, $a = 2$ and $\reg(M) = 5$;  in the second, $a = 1$ and $\reg(M) = 2$.  

Erman's proof uses general Boij-S\"oderberg techniques to reduce studying the Betti tables of arbitrary modules to the study of pure modules and then use a degeneration argument to supply the required numerical bound. These techniques were pushed even further in \cite{BoocherW}, where it is shown that in fact the sum of the Betti numbers is 50\% larger:

\begin{theorem}[Boocher--Wigglesworth, 2020 \cite{BoocherW}]\label{BoocherW}
  Let $M$ be a graded $R$-module of codimension $c \geqslant 3$ generated in
  degree $0$ and let $a \geqslant 2$ be the minimal degree of a first syzygy
  of $M$.  If $\reg(M) \leqslant 2a-2$, then
$$\beta(M) \geqslant \beta_0(M)(2^c+2^{c-1}).$$
If moreover $c \geqslant 9$, then
$$\beta_i(M)\geqslant 2 \, \beta_0(M){c \choose i}$$
for the first half of the Betti numbers, meaning for $1\leqslant i \leqslant \lceil c/2\rceil$. 
\end{theorem}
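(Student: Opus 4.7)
The plan is to follow Erman's Boij--S\"oderberg strategy in a finer form. By the solution of the Boij--S\"oderberg conjectures, the Betti table of $M$ decomposes as a positive rational combination
$$\beta(M)=\sum_{d} c_d\,\pi(d)$$
of normalized pure diagrams $\pi(d)$, indexed by strictly increasing degree sequences $d=(d_0,d_1,\dots,d_c)$; here the Herzog--K\"uhl formulas give $\pi_i(d)=\prod_{j\neq i}|d_j-d_0|/|d_i-d_j|$, and summing the degree-$0$ entries yields $\beta_0(M)=\sum_d c_d\,\pi_0(d)$. Consequently, any pointwise bound of the form $\sum_i \pi_i(d)\geq K\cdot\pi_0(d)$ that holds for every $d$ appearing in the decomposition upgrades, by linearity, to $\beta(M)\geq K\cdot\beta_0(M)$; likewise a bound $\pi_i(d)\geq K_i\cdot\pi_0(d)$ upgrades to $\beta_i(M)\geq K_i\cdot\beta_0(M)$.

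Next, I would use the hypotheses to restrict the admissible sequences. Generation in degree $0$ forces $d_0=0$; the minimal first syzygy degree $a$ forces $d_1\geq a$; and the regularity bound $\reg(M)\leq 2a-2$ translates into $d_i\leq (2a-2)+i$ for every $i$. Together with strict monotonicity, these inequalities confine $d$ to a small polytope of sequences that interpolate between the ``shifted Koszul'' sequence $(0,a,a+1,\dots,a+c-1)$ and certain upper-boundary sequences. The ``ideal'' Koszul sequence $(0,1,2,\dots,c)$, which would give $\sum_i\pi_i/\pi_0=2^c$, is excluded precisely because $a\geq 2$, which is exactly the slack we hope to convert into the extra factor of $3/2$.

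The heart of the proof is then the numerical inequality
$$\sum_{i=0}^{c}\frac{\pi_i(d)}{\pi_0(d)}\;\geq\; 2^c+2^{c-1}$$
for every admissible $d$. I expect the smallest admissible sequence (for instance $(0,2,3,\dots,c+1)$ in the case $a=2$) to yield exactly $1.5\cdot 2^c$, and the task is to verify, by a monotonicity or discrete-convexity argument on the Herzog--K\"uhl ratios, that pushing any $d_j$ further from $d_0$ only increases the sum. This optimization over the polytope of degree sequences is the main obstacle, since the Herzog--K\"uhl ratios are rational functions of all the $d_i$'s simultaneously; one likely proceeds by induction on $c$ or on the entries of $d$, reducing to explicit inequalities among binomial coefficients, and possibly invoking the Syzygy Theorem as a boundary backstop along the lines of Example~\ref{Ex:Pure modules with small Betti numbers two examples 32 and 44}.

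For the second statement I would run the same argument index by index: show $\pi_i(d)/\pi_0(d)\geq 2\binom{c}{i}$ on the admissible polytope for each $1\leq i\leq\lceil c/2\rceil$. The hypothesis $c\geq 9$ arises for purely numerical reasons: for small $c$ and $i$ near $c/2$, the Herzog--K\"uhl ratio on some boundary pure diagram dips below $2\binom{c}{i}$, and only once $c$ is large enough do the ratios stabilize above that threshold. Establishing the sharp value $c_0=9$ will require a careful case analysis at the boundary of the admissible polytope, and this is where I expect the bulk of the technical bookkeeping to lie.
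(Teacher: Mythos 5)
Your overall strategy --- decompose $\beta(M)$ as a positive rational combination of normalized pure diagrams via Boij--S\"oderberg theory, observe that the hypotheses on generation degree, first syzygy degree, and regularity constrain the admissible degree sequences, and then lift pointwise inequalities $\pi_i(d)\geq K_i\pi_0(d)$ to $\beta_i(M)\geq K_i\beta_0(M)$ --- is exactly the approach the paper attributes to Erman \cite{Erman} and Boocher--Wigglesworth \cite{BoocherW}. The linearity argument and the way positivity of the decomposition forces $d_0=0$, $d_1\geq a$, and $d_i\leq i+(2a-2)$ are all sound.

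However, the numerical heart of the argument, which you rightly identify as the main obstacle, is sketched incorrectly. You conjecture that the ``compressed'' sequence $(0,2,3,\dots,c+1)$ realizes the minimum $1.5\cdot 2^c$ and that pushing any $d_j$ farther from $d_0$ only increases $\sum_i\pi_i(d)/\pi_0(d)$. Both claims are false. For $c=3$, the sequence $(0,2,3,4)$ gives the normalized pure diagram $(1,6,8,3)$ with sum $18$, not $12$; it is the \emph{boundary} sequence $(0,2,3,5)$ (the one from Example~\ref{Ex: the flength ideal with 1551}) that gives $(1,5,5,1)$ with sum $12=1.5\cdot 2^3$. So moving $d_3$ from $4$ to $5$ \emph{decreases} the sum, the opposite of the monotonicity you propose; the minimizing sequences are those spread out to the regularity bound, not the tight ones near the shifted Koszul sequence. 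A correct treatment (as in \cite{BoocherW}) instead establishes, for $c\geq 9$, that every admissible pure diagram has $\pi_i(d)\geq 2\binom{c}{i}\pi_0(d)$ for $1\leq i\leq\lceil c/2\rceil$, combined with the Buchsbaum--Eisenbud--Horrocks bound $\pi_i\geq\binom{c}{i}\pi_0$ on the remaining indices, and a separate finite check handles $3\leq c\leq 8$. As written, your optimization step would not go through, and the proposal stops short of the actual proof.
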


Essentially, this says that if the regularity is ``low'', then for $c\geqslant 9$, the \emph{first half} of the Betti numbers are at least \emph{double} the conjectured Buchsbaum--Eisenbud--Horrcks bounds. Then on average the Betti numbers, will be at least $1.5$ times the BEH bounds, and thus the sum of all the Betti numbers needs to be at least $1.5(2^c)$.  The authors deal with the cases $c\leqslant 8$ separately.   Again it seems almost miraculous that the bound of $2^c + 2^{c-1}$ pops up --- in this case aided by the fact that the first half of the Betti numbers are twice as large as excepted.

\begin{remark}\label{rmk: c+1 vs 2c}
Notice that if $R = k[x_1, \ldots, x_c]$ with $c\geqslant 2$, then any ideal $I$ generated by $c+1$ generic quadrics will be an ideal of height $c$, and $\beta_1(R/I) = c + 1 < 2{c \choose 1}$. So without some other condition, for example on the regularity, there is no hope of finding a stronger bound for the first Betti number.
\end{remark}

As a corollary of Theorem \ref{BoocherW}, for ideals generated by quadrics with $c \geqslant 9$ we have
$$\reg(R/I) < 3, \mbox{ and $R/I$ is not a CI } \implies \beta_1(R/I) \geqslant 2c, \, \beta_2(R/I) \geqslant 2{c \choose 2}, \, \ldots $$ 
In other words, low regularity forces this rather specific bound for the number of generators.

\vspace{1em}

We end with a table summarizing the results concerning these stronger bounds (each).  We remind the reader that these entries all concern modules that are not complete intersections.

\noindent
   \begin{table}[H]
\centering
{\renewcommand{\arraystretch}{2}
\begin{tabular}{| c | c | c | c|c | c |}
    \cline{2-6}
\multicolumn{1}{c|}{}
    & $c \leqslant 4$ & $c \geqslant 5$ & \multicolumn{2}{c|}{multigraded} & low regularity \\
    \hline
    \multirow{4}{*}{$\begin{array}{c} \beta_i \geqslant {c \choose i} + {c-1 \choose i-1} \textrm{ for all } i \\ \textrm{ or } \\ \beta_i \geqslant {c \choose i} + {c-1 \choose i} \textrm{ for all } i
\end{array}$} & \multirow{4}{*}{$\begin{array}{c} \textrm{\textcolor{purple}{False}} \\ \textrm{\ref{remark that shows how we violate stronger bounds}} \end{array}$} & \multirow{4}{*}{$\begin{array}{c} \textrm{\textcolor{purple}{False}} \\ \textrm{\ref{remark that shows how we violate stronger bounds}}\end{array}$} & \multirow{7}{*}{$\begin{array}{c} c=n \\ \textrm{(CE, 1991) } \\ \textrm{\cite{CE}}\end{array}$} & \multirow{4}{*}{$\begin{array}{c} c < n \\ \textrm{\color{purple}{False}} \\ \textrm{\ref{remark that shows how we violate stronger bounds}} 
\end{array}$} & \multirow{4}{*}{$\begin{array}{c} \textrm{\color{purple}{False}} \\ \textrm{Rk \ref{remark that shows how we violate stronger bounds}} \end{array}$} \\
     &&& & &\\
     &&& &  &\\
     &&&&& \\
    \cline{1-3} \cline{5-6}
    \multirow{4}{*}{$\sum_i \beta_i \geqslant$ {\color{purple}$(1.5)$} $2^c$} & \multirow{3}{*}{$\begin{array}{c} \textrm{(CEM,} \\ \textrm{ 1990)} \\ \textrm{\cite{CEM}}\end{array}$} & \multirow{3}{*}{\color{purple}{Open}} & & \multirow{2}{*}{$\begin{array}{c} M \cong R/I \\ \textrm{(BS, '18 \cite{BS}) } \end{array}$} &  \multirow{7}{*}{$\xymatrix{\textrm{True} \\ \\ \textrm{True if } c \geqslant 9 \ar@{=>}[uu]_-{5 \leqslant c \leqslant 8}^-{\textrm{check}} \\ \textrm{(BW, '20 \cite{BoocherW})}}$} \\
    & & & & & \\
    \cline{5-5}
    &&&&\multirow{2}{*}{$\begin{array}{c} \textrm{general } M \\ \textrm{\color{purple}{Open}} \end{array}$} & \\
     &&&&& \\
    \cline{1-5}
    \multirow{3}{*}{$\beta_i \geqslant 2 {c \choose i}$ for $i<\frac{c}{2}$} & \multirow{3}{*}{$\begin{array}{c} \textrm{\color{purple}{False}} \\ \textrm{\ref{rmk: c+1 vs 2c} } \end{array}$}
 & \multirow{3}{*}{$\begin{array}{c} \textrm{\color{purple}{False}} \\ \textrm{\ref{rmk: c+1 vs 2c}} \end{array}$} & \multirow{3}{*}{$\begin{array}{c} \textrm{\color{purple}{False}} \\ \textrm{\ref{rmk: c+1 vs 2c}} \end{array}$} & \multirow{3}{*}{$\begin{array}{c} \textrm{\color{purple}{False}} \\ \textrm{\ref{rmk: c+1 vs 2c}} \end{array}$} & \\
     &&&&&\\
     &&&&&\\
    \hline
\end{tabular}
}
\caption{$M$ is a module of codimension $c$ that is not a complete intersection}
\label{table: not a ci}
\end{table}

\section*{Acknowledgements}

We thank Daniel Erman and Josh Pollitz for their detailed comments on a previous version of this survey. We also thank Mark Walker for helpful correspondence. The second author was partially supported by NSF grant DMS-2001445, now DMS-2140355.

\bibliographystyle{plain} 
\bibliography{References}
\end{document}